\newtheoremstyle{theorem}{1em}{1em}{\slshape}{0pt}{\bfseries}{.}{ }{}
\theoremstyle{theorem}
\newtheorem{theorem}{Theorem}
\newtheorem{conjecture}{Conjecture}
\newtheorem*{theorem*}{Theorem}
\newtheorem{corollary}[theorem]{Corollary}
\newtheorem*{corollary*}{Corollary}
\newtheorem{proposition}[theorem]{Proposition}
\newtheorem*{proposition*}{Proposition}
\newtheorem{lemma}[theorem]{Lemma}
\newtheorem{definition}[theorem]{Definition}
\newtheorem*{claim*}{Claim}
\theoremstyle{remark}
\newtheorem*{remark*}{Remark}
\providecommand{\setN}{\mathbb{N}}
\providecommand{\setZ}{\mathbb{Z}}
\providecommand{\setQ}{\mathbb{Q}}
\providecommand{\setR}{\mathbb{R}}
\providecommand{\setC}{\mathbb{C}}
\newcommand{\E}{\mathop{\mathbb{E}}}
\newcommand{\Vol}{\mathrm{Vol}}
\newcommand{\rank}{\textrm{rank}}
\renewcommand{\span}{\textrm{span}}
\newcommand{\nd}{\mathrm{nd}}
\def\pazocal#1{#1}
\title{The Subspace Flatness Conjecture and Faster Integer Programming}
\author{Victor Reis\thanks{University of Washington, Seattle and Microsoft Research. Email: {\tt vrstcs@gmail.com}.} \;\; and \; Thomas Rothvoss\thanks{University of Washington, Seattle. Email: {\tt rothvoss@uw.edu}. Supported by NSF CAREER grant 1651861, a David \& Lucile Packard Foundation Fellowship and NSF grant 2318620 \emph{AF: SMALL: The Geometry of Integer Programming and Lattices}.}}
\date{}
\begin{document}

\maketitle

\begin{abstract} 
  In a seminal paper, Kannan and Lov\'asz (1988) considered a quantity $\mu_{KL}(\Lambda,K)$
  which denotes the best volume-based lower bound on the \emph{covering radius} $\mu(\Lambda,K)$ of a convex
  body $K$ with respect to a lattice $\Lambda$. Kannan and Lov\'asz proved that $\mu(\Lambda,K) \leq n \cdot \mu_{KL}(\Lambda,K)$ and the Subspace Flatness Conjecture by Dadush (2012) claims an $O(\log(2n))$ factor suffices, which would match
  the lower bound from the work of Kannan and Lov\'asz.
  We settle this conjecture up to a constant in the exponent by proving that  $\mu(\Lambda,K) \leq O(\log^{3}(2n)) \cdot \mu_{KL} (\Lambda,K)$. Our proof is
  based on the Reverse Minkowski Theorem due to Regev and Stephens-Davidowitz (2017).
  Following the work of Dadush (2012, 2019), we obtain a $(\log(2n))^{O(n)}$-time randomized algorithm to
  solve integer programs in $n$ variables.
  Another implication of our main result is a near-optimal \emph{flatness constant} of $O(n \log^{2}(2n))$,
  improving on the previous bound of  $O(n^{4/3} \log^{O(1)} (2n))$.
\end{abstract}


\section{Introduction}

Lattices are fundamental objects studied in various areas of mathematics and computer science.
Here, a \emph{lattice} $\Lambda$ is a discrete subgroup of $\setR^n$. If $B \in \setR^{n \times k}$ is a matrix with linearly independent columns
$b_1,\dots,b_k$, then we may write a lattice in the form $\Lambda(B) := \{ \sum_{i=1}^k y_ib_i: y_i \in \setZ\}$. 
In mathematics, lattices are the central object of study in the geometry of numbers with many applications
for example to number theory, see e.g. the textbooks of Gruber and Lekkerkerker~\cite{GeometryOfNumbers-GruberLekkerkerker1987} and Coppel~\cite[Chapter 8]{Numbertheory-Coppel-2009}. 
On the computer science side, lattices found applications for example in lattice-based cryptography~\cite{LWE-RegevJACM09} and cryptanalysis~\cite{KnapsackCryptosystems-Odlyzko1990}. One of the most important
algorithms in this area is the \emph{LLL-algorithm} by Lenstra, Lenstra and Lov\'asz~\cite{LLL1982} which finds an approximately orthogonal basis for a given lattice in polynomial time. One of the consequences
of the LLL-reduction is a polynomial time $2^{n/2}$-approximation algorithm for the problem of finding a (nonzero) \emph{shortest vector} in a lattice w.r.t. the Euclidean norm. Ajtai, Kumar, and Sivakumar~\cite{SVP-Sieving-Algo-AKS-STOC2001} invented the so-called \emph{sieving algorithm} to find the shortest vector in a lattice in time $2^{O(n)}$.
Later, Micciancio and Voulgaris~\cite{CVP-Voronoi-Algo-MicciancioVoulgaris-SICOMP2013} proved that an algorithm making use of the properties of the \emph{Voronoi cell} can even solve the more general \emph{closest vector problem} in time $2^{O(n)}$.
The sieving algorithm of \cite{SVP-Sieving-Algo-AKS-STOC2001} generalizes to arbitrary norms and beyond~\cite{BlomerNaeweGeneralizedSievingTCS2009,EnumerateLatticeAlgosDadushPeikertVempalaFOCS11,RandomizedSieving-Dadush2014}
while no such generalization is known for the Voronoi cell algorithm.
A more general problem with tremendous applications in combinatorial optimization and operations research is the one of finding an integer point in an arbitrary convex body or polytope.
Lenstra~\cite{IPinFixedDim-Lenstra1983} used the then-recent lattice basis reduction algorithm to solve any $n$-variable integer program in time\footnote{To be precise, the paper states a running time of $2^{O(n^3)}$ but a more careful look at the LLL-reduced basis that is being used immediately reduces it to $2^{O(n^2)}$.} $2^{O(n^2)}$. This was later improved by Kannan~\cite{n-to-n-algos-for-SVP-CVP-Kannan-MOR1987} to $2^{O(n)}n^{2.5n}$ and then by Hildebrand and K\"oppe~\cite{DBLP:journals/disopt/HildebrandK13} to $2^{O(n)}n^{2n}$ followed by Dadush, Peikert and Vempala~\cite{EnumerateLatticeAlgosDadushPeikertVempalaFOCS11} with $n^{\frac{4}{3}n + o(n)}$  and finally by Dadush~\cite{DadushThesis2012} to $2^{O(n)}n^n$.
Later Dadush, Eisenbrand and Rothvoss~\cite{FromApxToExactIP_DER_Arxiv2022} found a different, simpler algorithm providing the same $2^{O(n)}n^n$ time bound.

A parameter appearing in the geometry of numbers is the \emph{covering radius}  
\[
  \mu(\Lambda,K) := \min\big\{ r \geq 0 \mid \Lambda + rK = \textrm{span}(\Lambda) \big\}
\]
of a lattice $\Lambda \subseteq \setR^n$ with respect to a compact convex set $K \subseteq \setR^n$ with $\textrm{span}(\Lambda) = \textrm{affine.hull}(K)$. This quantity is substantially harder to compute and sits on the second level of the polynomial hierarchy. To be exact, given a lattice $\Lambda$ and a parameter $r>0$, it is $\Pi_2^P$-complete to decide if $\mu(\Lambda,B_{\infty}^n) \leq r$, see Haviv and Regev~\cite{HardnessofCoveringRadiusHavivRegev2006}.

In terms of approximating $\mu(\Lambda,K)$, one can quickly observe the lower bound $\mu(\Lambda,K) \geq (\frac{\det(\Lambda)}{\Vol_n(K)})^{1/n}$, simply  because for $r<(\frac{\det(\Lambda)}{\Vol_n(K)})^{1/n}$, the average density of the translates $\Lambda + rK$ is less than 1.
However, this lower bound may be far off the real covering radius, for example if $\Lambda = \setZ^2$
and $K = [-\frac{1}{M},\frac{1}{M}] \times [-M,M]$ with $M \geq 1$ in which case $\mu(\Lambda,K)=\frac{M}{2}$
while $(\frac{\det(\Lambda)}{\Vol_2(K)})^{1/2} = (\frac{1}{4})^{1/2} = \frac{1}{2}$.
On the other hand, for any subspace $W \subseteq \setR^n$ one trivially has $\mu(\Lambda,K) \geq \mu(\Pi_W(\Lambda),\Pi_W(K))$,
where $\Pi_W$ is the orthogonal projection onto $W$. Thus if we 
 abbreviate
\begin{equation} \label{eq:DefRW}
  R_W(\Lambda,K) := \Big(\frac{\det(\Pi_W(\Lambda))}{\Vol_{\dim(W)}(\Pi_W(K))} \Big)^{1/\dim(W)}
\end{equation}
then $\mu(\Lambda,K) \geq R_W(\Lambda,K)$ for any subspace\footnote{There is the technicality that for an arbitrary subspace $W$, the projection $\Pi_W(\Lambda)$ might not actually be a lattice because $\Pi_W(\Lambda)$ contains points that are arbitrarily close, e.g. when $\Lambda := \setZ^n$ for $n \geq 2$ and $W := \textrm{span}( e_1+\sqrt{2} e_2)$. So one should restrict to subspaces $W$ so that $\Pi_W(\Lambda)$ is indeed a lattice, which is equivalent to asking that $W^{\perp}$ is a lattice subspace w.r.t. $\Lambda$, which in turn is equivalent to asking that $W$ itself is a lattice subspace w.r.t. $\Lambda^*$; see Lemma 1.1 in \cite{KannanLovasz-CoveringMinima-AnnalsOfMath1988} for details. Alternatively, one can make the convention that $\det(\Pi_W(\Lambda)) = 0$ whenever $\Pi_W(\Lambda)$ fails to be a lattice, in which case the lower bound of $R_W(\Lambda,K)=0$ is true anyway.} $W$.
Following Kannan and Lov\'asz~\cite{KannanLovasz-CoveringMinima-AnnalsOfMath1988}, we consider a lower
bound $\mu_{KL}(\Lambda,K)$ on the covering radius which is defined as  
\[
 \mu_{KL}(\Lambda,K) := \max_{\substack{W \subseteq \textrm{span}(\Lambda)\textrm{ subspace} \\ d := \dim(W)}} \Big(\frac{\det(\Pi_W(\Lambda))}{\Vol_d(\Pi_W(K))}\Big)^{1/d} = \max_{\substack{W \subseteq \textrm{span}(\Lambda) \\ \textrm{subspace}}} \big\{ R_W(\Lambda,K) \big\}.
\]
Back to the example of  $\Lambda = \setZ^2$ and $K = [-\frac{1}{M},\frac{1}{M}] \times [-M,M]$ with $M \geq 1$, one can verify that an optimum choice is $W := \textrm{span}(e_1)$ so that $\det(\Pi_W(\Lambda)) = \det(\setZ)=1$ and $\Vol_1(\Pi_W(K)) = \Vol_1([-\frac{1}{M},\frac{1}{M}]) = \frac{2}{M}$, leading to $\mu_{KL}(\Lambda,K)=\frac{M}{2}$.
In fact, Kannan and Lov\'asz~\cite{KannanLovasz-CoveringMinima-AnnalsOfMath1988} provide an upper bound of
\[
 \mu_{KL}(\Lambda,K) \leq \mu(\Lambda,K) \leq n \cdot \mu_{KL}(\Lambda,K)
\]
for all full rank lattices $\Lambda \subseteq \setR^n$ and convex bodies $K \subseteq \setR^n$.
On the other hand, they also construct a simplex $K \subseteq \setR^n$ for which $\mu(\setZ^n,K) = H_n \cdot \mu_{KL}(\setZ^n,K)$ where $H_n =  \sum_{i=1}^n \frac{1}{i} = \log(n) \pm O(1)$ is the $n$th harmonic number.
Dadush~\cite{DadushThesis2012} states the following conjecture, attributing it to Kannan and Lov\'asz~\cite{KannanLovasz-CoveringMinima-AnnalsOfMath1988}: 

\begin{conjecture}[Subspace Flatness Conjecture] \label{conj:KL}
  For any full rank lattice $\Lambda \subseteq \setR^n$ and any convex body $K \subseteq \setR^n$ one has
\[
 \mu_{KL}(\Lambda,K) \leq \mu(\Lambda,K) \leq O(\log(2n)) \cdot \mu_{KL}(\Lambda,K).
\]
\end{conjecture}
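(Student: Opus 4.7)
My plan is to target the conjectural $O(\log n)$ bound, matching the Kannan--Lov\'asz lower bound up to constants. Normalize so that $\mu_{KL}(\Lambda,K)=1$; the goal becomes $\mu(\Lambda,K)\leq C\log n$ for an absolute constant $C$. The overall strategy has three stages: replace $K$ by an ellipsoidal surrogate $E$, apply the Reverse Minkowski Theorem of Regev and Stephens-Davidowitz to bound the smoothing parameter of $\Lambda$, and convert the smoothing bound to a covering bound via chaining with only a constant-factor overhead.

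For Stage 1 I would choose $E$ (for instance a covering $M$-ellipsoid of $K$, ideally computed in a $\Lambda$-adapted position) such that any lattice cover of $\span(\Lambda)$ by translates of $E$ at radius $r$ yields a cover by translates of $K$ at radius $O(r)$, and such that $\mu_{KL}(\Lambda,E)=O(\mu_{KL}(\Lambda,K))$. After a linear change of coordinates bringing $E$ to the Euclidean ball $B_2^n$, the hypothesis becomes $\det(\Pi_W(\Lambda))\geq \Omega(\Vol_{\dim W}(B_2^{\dim W}))$ for every subspace $W$, which, passing to the dual, is a uniform lower bound on $\det(\Lambda')$ over sublattices $\Lambda'\subseteq \Lambda^*$. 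Reverse Minkowski then gives a Gaussian-mass bound $\rho_s(\Lambda^*\setminus\{0\})\leq 1/2$ at scale $s=\Theta(1/\sqrt{\log n})$, which translates to $\eta_{1/2}(\Lambda)=O(\sqrt{\log n})$. To pass from smoothing to covering without paying the naive $\sqrt{n}$ overhead, I would iterate: peel off the densest sublattice (again produced by Reverse Minkowski), cover its cosets via the smoothing bound, and recurse on the orthogonal quotient, exploiting orthogonality across the $O(\log n)$ scales so that the per-scale contributions aggregate into a single $O(\log n)$ rather than a product of $\sqrt{\log n}$ terms.

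The main obstacle is Stage 1. The Kannan--Lov\'asz lower bound is witnessed by a simplex, which is maximally far from an ellipsoid in $\ell$-position, so any dimension-dependent loss in passing from $K$ to $E$ directly degrades the target bound --- this is precisely where the paper is forced up to $\log^7 n$. Achieving $O(1)$ loss here would very likely require a genuinely $\Lambda$-dependent ellipsoidal surrogate rather than an off-the-shelf convex-geometric one, together with a refined Reverse Minkowski argument that exploits the coordinate structure induced by $\Lambda^*$. A secondary obstacle is the orthogonality in the chaining step: a naive telescoping only yields $\log^{3/2} n$, so the recursion has to be arranged so that the smoothing bound at each level controls only the component orthogonal to all previous levels, and proving this cleanly is where I expect the most delicate work to lie.
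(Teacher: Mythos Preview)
This is Conjecture~\ref{conj:KL}, which the paper does \emph{not} prove; the paper's main result (Theorem~\ref{thm:KLConj}) establishes only the weaker $O(\log^7 n)$ bound. Your proposal targets the full $O(\log n)$ bound, and you yourself note that the paper does not reach it. So there is no paper proof to compare against; the question is whether your outline closes the gap.

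It does not. The two steps you label as ``obstacles'' are in fact the entire problem, and neither is resolved.

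\textbf{Stage 1.} You need an ellipsoid $E$ with $\mu(\Lambda,K) \lesssim \mu(\Lambda,E)$ and $\mu_{KL}(\Lambda,E) \lesssim \mu_{KL}(\Lambda,K)$, both up to absolute constants. An $M$-ellipsoid does not do this: the covering-number bounds $N(K,E),N(E,K)\le 2^{O(n)}$ only control $\Vol_d(\Pi_W(K))^{1/d}$ against $\Vol_d(\Pi_W(E))^{1/d}$ up to a factor $2^{O(n/d)}$, which is unbounded for low-dimensional $W$ --- precisely the subspaces that can witness $\mu_{KL}$. Nor do they relate the covering radii within a constant. You concede that what is needed is ``a genuinely $\Lambda$-dependent ellipsoidal surrogate\ldots together with a refined Reverse Minkowski argument,'' but you do not construct one. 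This is not a technicality; it is the heart of the conjecture. The paper sidesteps any such reduction by working directly with $K\cap(-K)$ in $\ell$-position and absorbing the resulting losses (Theorem~\ref{thm:PisierRescaling}, Proposition~\ref{prop:VolumeOfProjectionVsSymmetrizer}), which is exactly where the extra $\log$ factors enter.

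\textbf{Stage 3.} Even granting Stage~1, your chaining claim is unsupported. In the purely Euclidean case Regev and Stephens-Davidowitz obtain $\mu(\Lambda,B_2^n) \le O(\log^{3/2} n)\cdot\mu_{KL}(\Lambda,B_2^n)$, and this has not been improved: one $\log n$ comes from Reverse Minkowski at each level of the filtration, and the remaining $\sqrt{\log n}$ from combining the levels. Your assertion that orthogonality across scales lets the contributions ``aggregate into a single $O(\log n)$'' is exactly what one would like, but the Reverse Minkowski loss is incurred independently at every scale regardless of orthogonality to previous scales, and you provide no mechanism to amortize it.

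In short, the proposal is a reasonable sketch of where the difficulties lie, but it does not contain a proof: Stages~1 and~3 are stated as hopes, and either one, if carried out, would already be a significant advance over what is known.
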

Dadush also discovered the tremendous implications of this conjecture to optimization and showed that it would imply a $O(\log(2n))^n$-time algorithm to solve $n$-variable integer programs, assuming the subspace attaining $\mu_{KL}(\Lambda,K)$ could also be found in the same time~\cite[Theorem 7.1.3]{DadushThesis2012}.
Later, Dadush and Regev~\cite{TowardsReverseMinkowskiDadushRegevFOCS16} conjectured a \emph{Reverse Minkowski-type Inequality}, which intuitively says that
any lattice without dense sublattices should contain only a few short vectors. Among other applications,
they proved that this conjecture would
imply Conjecture~\ref{conj:KL} (with some logarithmic loss) at least for the case that $K$ is an ellipsoid.
The conjecture of \cite{TowardsReverseMinkowskiDadushRegevFOCS16} was then resolved by Regev and Stephens-Davidowitz~\cite{Regev-SD-ReverseMinkowskiTheoremSTOC17,Regev-SD-ReverseMinkowskiTheoremAnnalsOfMath2024}
with a rather ingenious proof. More precisely, they prove the following: 
\begin{theorem}[Reverse Minkowski Theorem~\cite{Regev-SD-ReverseMinkowskiTheoremSTOC17,Regev-SD-ReverseMinkowskiTheoremAnnalsOfMath2024}] \label{thm:ReverseMinkowskiTheorem}
  Let $\Lambda \subseteq \setR^n$ be a lattice that satisfies $\det(\Lambda') \geq 1$ for all sublattices $\Lambda' \subseteq \Lambda$.
  Then for a large enough constant $C>0$ and $s = C\log(2n)$ one has  $
   \rho_{1/s}(\Lambda) \leq \frac{3}{2}
  $.
\end{theorem}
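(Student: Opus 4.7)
My plan is induction on the dimension $n$, decomposing $\Lambda$ along a densest primitive sublattice. For the base case $n=1$, the stability hypothesis forces $\Lambda=a\setZ$ with $a\geq 1$, and $\rho_{1/s}(a\setZ)=1+2\sum_{k\geq 1}e^{-\pi s^{2}a^{2}k^{2}}\leq 3/2$ for any $s\geq 1$. For the inductive step, let $\Lambda^{\star}\subseteq\Lambda$ be a primitive sublattice minimizing the normalized determinant $\det(\Lambda')^{1/\dim(\Lambda')}$, set $W=\span(\Lambda^{\star})$, and let $\bar{\Lambda}=\Pi_{W^{\perp}}(\Lambda)$. Both $\Lambda^{\star}$ and $\bar{\Lambda}$ inherit the stability hypothesis: for $\Lambda^{\star}$ this is automatic, and for $\bar{\Lambda}$ any sublattice of determinant less than $1$ would, after pulling back through $\Pi_{W^{\perp}}$ and intersecting with $\Lambda$, yield a sublattice of $\Lambda$ of strictly smaller normalized determinant than $\Lambda^{\star}$, contradicting its choice.

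Writing each $x\in\Lambda$ as a chosen coset representative modulo $\Lambda^{\star}$ plus an element of $\Lambda^{\star}$, and splitting the norm via Pythagoras in $W\oplus W^{\perp}$, Fubini on the Gaussian density gives
\[
  \rho_{1/s}(\Lambda) \;=\; \sum_{y\in\bar{\Lambda}} e^{-\pi s^{2}\|y\|^{2}}\;\rho_{1/s}\!\bigl(\Lambda^{\star}+t(y)\bigr)
\]
for appropriate translates $t(y)\in W$. The Banaszczyk-type bound $\rho_{1/s}(\Lambda^{\star}+t)\leq \rho_{1/s}(\Lambda^{\star})$ then factors this as $\rho_{1/s}(\Lambda)\leq \rho_{1/s}(\Lambda^{\star})\cdot\rho_{1/s}(\bar{\Lambda})$, ready for induction on each factor. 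Naively this multiplicative split yields $(3/2)^{2}$, which blows up catastrophically over the $\Theta(\log n)$ recursive levels, so I would strengthen the inductive statement to one that controls the excess mass $\rho_{1/s}(\Lambda)-1$ by a quantity on the order of $n^{-\Omega(C)}$. The choice $s=C\log n$ is precisely calibrated so that Gaussian cross-terms between the two factors are of this order per recursive level, preserving the invariant.

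The main obstacle, and the conceptual heart of the theorem, is the boundary case where $\Lambda^{\star}=\Lambda$, so that no nontrivial split is available. Here one must extract, purely from the condition $\det(\Lambda')\geq 1$ for all sublattices, a quantitative statement that at the narrow scale $s=C\log n$ essentially all Gaussian mass sits at the origin. I would attempt this by Poisson summation, passing to the dual lattice $\Lambda^{*}$, on which the stability hypothesis translates (via the duality exchanging primitive sublattices of $\Lambda$ with orthogonal-projection quotients of $\Lambda^{*}$) into geometric lower bounds on dual successive minima. Making this duality quantitative with constants sharp enough to re-close the inductive recursion, and handling the delicate regime where the densest sublattice has determinant barely exceeding $1$, is where I expect the bulk of the technical work and where the $\log n$ dependence of $s$ in the theorem is forced.
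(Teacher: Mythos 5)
The theorem you are proving is not proved in this paper; it is imported verbatim from Regev and Stephens-Davidowitz~\cite{Regev-SD-ReverseMinkowskiTheoremSTOC17}, so there is no internal proof to compare against. Judged on its own merits, your sketch correctly executes the \emph{easy} half of the argument but has no concrete plan for the hard half.

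The parts that are sound: choosing $\Lambda^{\star}$ to minimize normalized determinant, showing both $\Lambda^{\star}$ and $\bar\Lambda = \Pi_{W^\perp}(\Lambda)$ inherit the hypothesis (your argument for $\bar\Lambda$ checks out after unwinding --- a sublattice $\bar\Lambda'\subseteq\bar\Lambda$ with $\det<1$ pulls back, by $\det(\tilde\Lambda)=\det(\Lambda^{\star})\det(\bar\Lambda')$, to a primitive sublattice of $\Lambda$ with strictly smaller normalized determinant than $\Lambda^{\star}$, using $\det(\Lambda^{\star})\ge 1$), and the multiplicative bound $\rho_{1/s}(\Lambda)\le\rho_{1/s}(\Lambda^{\star})\rho_{1/s}(\bar\Lambda)$ from the Fubini decomposition plus the Banaszczyk translate inequality. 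This is the standard reduction, via the canonical filtration, to the case of a single stable lattice with no proper sublattice attaining the minimal normalized determinant.

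The gap is precisely in that remaining case, and the fix you propose is circular. For a stable lattice ($\det(\Lambda)=1$) the Poisson summation formula gives $\rho_{1/s}(\Lambda)=s^{-n}\rho_s(\Lambda^{\ast})$, and the dual of a stable lattice is again stable. So ``pass to the dual and extract lower bounds on dual successive minima'' converts the bound you want on $\rho_{1/s}(\Lambda)$ into an exactly equally hard bound on $\rho_s(\Lambda^{\ast})$; no progress is made. The lower bounds on dual minima that stability provides ($\lambda_1(\Lambda^{\ast})\ge 1$, and more generally lower bounds on $\prod_{i\le d}\lambda_i(\Lambda^{\ast})$) are far too weak: even for $\Lambda^{\ast}=\setZ^n$, where all $\lambda_i=1$, a naive product-over-coordinates estimate only requires $s\gtrsim\sqrt{\log n}$, and the extra $\sqrt{\log n}$ factor in the theorem is there precisely to handle stable lattices whose Gaussian mass decays much more slowly and which no successive-minima bookkeeping reaches. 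Moreover, your proposed strengthening of the induction to $\rho_{1/s}(\Lambda)-1\le n^{-\Omega(C)}$ is almost certainly false for some stable lattices at $s=C\log n$ (which is why the theorem asserts a constant bound $3/2$ rather than $1+o(1)$), so the product recursion does not close in the form you describe. What the actual proof does is qualitatively different: it works on the compact moduli space of stable lattices of a fixed rank, locates a \emph{maximizer} of $\rho_{1/s}$, shows via a first-variation argument that an interior maximizer must have an isotropic Gaussian second-moment tensor, analyzes that isotropy condition directly, and only uses a splitting of the kind you describe when the maximizer sits on the boundary of the stable region. That variational/compactness mechanism is the conceptual heart of the theorem, and it is absent from your sketch.
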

Here, one has $\rho_t(x) := \exp(-\pi \|x/t\|_2^2)$ where $t>0$ and for a discrete set $S \subseteq \setR^n$ we abbreviate $\rho_t(S) := \sum_{x \in S} \rho_t(x)$.
Note that a set $\Lambda'$ is a \emph{sublattice} of a lattice $\Lambda$ if $\{ \bm{0}\} \subseteq \Lambda' \subseteq \Lambda$ and $\Lambda'$ is again a lattice.
To understand the power of this result compared to classical arguments,
note that from $\det(\Lambda') \geq 1$ for all $\Lambda' \subseteq \Lambda$ one can derive that each vector $x \in \Lambda \setminus \{ \bm{0}\}$ has length $\|x\|_2 \geq 1$ and so by a standard packing argument
we know that for any $r \geq 1$ one has $|\Lambda \cap rB_2^n| \leq (3r)^n$, which is exponential in $n$. On the other hand, again under the assumption that $\det(\Lambda') \geq 1$ for all $\Lambda' \subseteq \Lambda$, the Reverse Minkowski Theorem implies that
$|\Lambda \cap rB_2^n| \leq \exp( \Theta(\log^2(2n)) \cdot r^2)$ which is quasi-polynomial in $n$.
Also, \cite{Regev-SD-ReverseMinkowskiTheoremSTOC17,Regev-SD-ReverseMinkowskiTheoremAnnalsOfMath2024} tighten the reduction to the Subspace Flatness Conjecture and
show the following:
\begin{theorem}[\cite{Regev-SD-ReverseMinkowskiTheoremSTOC17,Regev-SD-ReverseMinkowskiTheoremAnnalsOfMath2024}] \label{thm:KLForEllipsoids} 
  For any full rank lattice $\Lambda \subseteq \setR^n$ and any ellipsoid $\pazocal{E} \subseteq \setR^n$ one has
  \[
   \mu_{KL}(\Lambda,\pazocal{E}) \leq \mu(\Lambda,\pazocal{E}) \leq O(\log^{3/2} (2n)) \cdot \mu_{KL}(\Lambda,\pazocal{E}).
  \]
\end{theorem}

\subsection{Our contribution}

Our main result is as follows:
\begin{theorem} \label{thm:KLConj} 
  For any full rank lattice $\Lambda \subseteq \setR^n$ and any convex body $K \subseteq \setR^n$
  one has
  \[
   \mu_{KL}(\Lambda,K) \leq \mu(\Lambda,K) \leq O(\log^3 (2n)) \cdot \mu_{KL}(\Lambda,K).
  \]
\end{theorem}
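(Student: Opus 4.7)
The plan is to extend the ellipsoidal case of Regev--Stephens-Davidowitz to general convex bodies, basing everything on Theorem~\ref{thm:ReverseMinkowskiTheorem}. Normalize so that $\mu_{KL}(\Lambda,K)=1$; the target bound is then $\mu(\Lambda,K)\leq O(\log^7 n)$.

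First I would replace $K$ by a suitable ellipsoidal proxy $\pazocal{E}$. John's ellipsoid loses a factor $n$ and is therefore useless here, so instead I would pick $\pazocal{E}$ in a position adapted to Gaussian measure (something like an $\ell$- or $M$-position ellipsoid) for which Gaussian volumes of $K$ and $\pazocal{E}$ agree up to polylog-in-$n$ factors. After a linear change of coordinates making $\pazocal{E}=B_2^n$, the hypothesis $\mu_{KL}(\Lambda,K)=1$ translates, up to a polylog loss, into the sublattice-determinant condition $\det(\Lambda')\gtrsim 1$ required by Theorem~\ref{thm:ReverseMinkowskiTheorem}. The theorem then gives $\rho_{1/s}(\Lambda)\leq \frac{3}{2}$ for $s=\Theta(\log n)$, and a Banaszczyk-style smoothing argument upgrades this Gaussian mass bound into the Euclidean covering statement $\Lambda + O(s)\cdot B_2^n \supseteq \setR^n$; pulling back gives $\Lambda + O(s)\cdot \pazocal{E}\supseteq \setR^n$ in the original coordinates.

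Second, because $\pazocal{E}$ only approximates $K$ on average, there may be directions $W$ where $\Pi_W(\pazocal{E})$ is much larger than $\Pi_W(K)$. Such a direction already certifies an even smaller $\mu_{KL}$ on the subspace $W$, so I would split $W$ off and recurse on the projected problem $(\Pi_{W^{\perp}}(\Lambda),\,\Pi_{W^{\perp}}(K))$. The overall covering radius is controlled by the Euclidean cover produced in the first step plus the inductive covering radius in $W^{\perp}$. With $O(\log n)$ levels of recursion and polylog losses per level, careful bookkeeping yields the claimed $\log^7(n)$ factor.

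The main obstacle will be the first step: finding a $K$-adapted ellipsoidal proxy whose distortion is only polylogarithmic and, crucially, whose failure in bad directions can be charged cleanly to the lower-dimensional recursion without breaking the sublattice-determinant hypothesis of the Reverse Minkowski Theorem. Maintaining a consistent notion of ``position'' across the recursion, so that each subproblem again satisfies a properly rescaled version of $\mu_{KL}\leq 1$, is what I expect to drive the technical heart of the proof.
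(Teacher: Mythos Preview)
Your plan has a genuine gap in the very first step, and it is the step you flag as the main obstacle. Even in the Euclidean case $K=B_2^n$, the hypothesis $\mu_{KL}(\Lambda,B_2^n)\leq 1$ does \emph{not} translate into the condition $\det(\Lambda')\geq 1$ for all sublattices $\Lambda'\subseteq\Lambda$ required by Theorem~\ref{thm:ReverseMinkowskiTheorem}: $\mu_{KL}$ controls \emph{projections} of $\Lambda$, not its sublattices, and these can behave very differently. So there is no direct way to apply the Reverse Minkowski Theorem to $\Lambda$ itself, with or without an ellipsoidal proxy. What the paper (and already Regev--Stephens-Davidowitz in the ellipsoidal case) does instead is a \emph{lattice-side} decomposition: take a $2$-stable well-separated filtration $\{\bm{0}\}=\Lambda_0\subset\cdots\subset\Lambda_k=\Lambda$ and apply Reverse Minkowski not to $\Lambda$ but to each quotient $\Lambda_i/\Lambda_{i-1}$, which by construction \emph{is} a scaling of a stable lattice. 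The covering radius is then controlled via the triangle inequality over the filtration, and $\mu_{KL}$ enters only to bound the normalized determinants $r_i$ of the quotients. Your scheme skips this entire mechanism.

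A second, independent gap is the treatment of asymmetry. Replacing $K$ by an $\ell$- or $M$-position ellipsoid does not handle the non-symmetric case with only polylog loss; John-type approximations of a general $K$ by an ellipsoid genuinely cost polynomial factors. The paper never passes to an ellipsoid. Instead it centers $K$ at its Santal\'o point, works with the symmetrizer $K_{\mathrm{sym}}=K\cap(-K)$ put in $\ell$-position (Theorem~\ref{thm:PisierRescaling}), and uses a Rudelson-type volume comparison (Proposition~\ref{prop:VolumeOfProjectionVsSymmetrizer}) to relate $\Vol_d(\Pi_W(K))$ and $\Vol_d(\Pi_W(K_{\mathrm{sym}}))$ with a loss of $(n/d)^3$, which is harmless because the relevant $d$ is $\gtrsim n/\log n$. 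Finally, the recursion in the paper is driven by the \emph{lattice} filtration (pick $W=\mathrm{span}(\Lambda_{i^*})$ with $\dim W\geq n/2$ and induct on $\Pi_{W^\perp}$), not by ``bad directions'' of a body-side ellipsoid approximation; this is precisely what makes the position consistent across levels and keeps the bookkeeping to one $\log n$ per halving of dimension.
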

We will break the proof into two parts that can be found in Section~\ref{sec:WholeMainProof}.

Our result is constructive in the following sense: 
\begin{theorem} \label{thm:FindingSubspaceIn2ToN} 
  Given a full rank lattice $\Lambda := \Lambda(B)$ and a convex body $K \subseteq \setR^n$ with $c+r_0B_2^n \subseteq K \subseteq r_1 B_2^n$, there is a randomized
  algorithm to find a subspace $W \subseteq \setR^n$ so that 
  \[
    \mu(\Lambda,K) \leq O(\log^4 (2n)) \cdot R_W(\Lambda,K). 
  \]
  The running time of that algorithm is $2^{O(n)}$ times a polynomial in $\log(\frac{1}{r_0})$, $\log(r_1)$ and in the encoding length of $B$.
\end{theorem}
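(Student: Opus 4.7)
The plan is to convert the existential proof of Theorem~\ref{thm:KLConj} (carried out in Sections~\ref{sec:InductiveStep} and~\ref{sec:MainProof}) into a $2^{O(n)}$-time randomized algorithm, paying one extra logarithmic factor ($\log^8 n$ instead of $\log^7 n$) for discretization and approximation. The existential argument identifies the KL-witnessing subspace $W$ as the span of a distinguished sublattice $\Lambda' \subseteq \Lambda$ produced recursively from failures of the Reverse Minkowski Theorem (Theorem~\ref{thm:ReverseMinkowskiTheorem}). I would replace each such existential selection by an explicit enumeration of $2^{O(n)}$ candidate sublattices and pick the best one by direct evaluation.

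First I would preprocess the input. LLL reduction of $B$ runs in time polynomial in the encoding length of $B$ and yields a nearly orthogonal basis. The containment $c + r_0 B_2^n \subseteq K \subseteq r_1 B_2^n$ supplies a well-conditioned membership oracle for $K$, enabling randomized volume estimation of $\Vol_d(\Pi_W(K))$ to within a $(1 \pm 1/\mathrm{poly}(n))$ factor for each projection $W$ in time polynomial in $n$, $\log(1/r_0)$, and $\log(r_1)$. The determinant $\det(\Pi_W(\Lambda))$ can be read off from an LLL basis of $\Pi_W(\Lambda)$. Consequently, for any proposed $W$, the ratio $(\det(\Pi_W(\Lambda))/\Vol_d(\Pi_W(K)))^{1/d}$ is computable to multiplicative accuracy $1 \pm 1/\mathrm{poly}(n)$, which is comfortably absorbed into the extra $\log n$ in the target bound.

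Next comes candidate generation. At each level of the existential recursion, the witness $\Lambda'$ is generated by vectors short in an ellipsoidal quadratic form derived from $K$ at that level (think of the John-type inner ellipsoid of the current projection of $K$). I would enumerate a superset of these generators by running a $2^{O(n)}$-time discrete Gaussian sampler on $\Lambda$ under that quadratic form, at a width matching the scale $s = \Theta(\log n)$ from Theorem~\ref{thm:ReverseMinkowskiTheorem}, and collecting all sampled vectors below a suitable length threshold. Every primitive sublattice generated by a linearly independent subset of size at most $n$ of these samples becomes a candidate $\Lambda'$, yielding a candidate $W = \textrm{span}(\Lambda')$. The algorithm returns the candidate maximizing the approximated KL ratio.

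The main obstacle is showing that this polynomially structured (but exponentially large) candidate list contains a near-optimal witness while keeping the total work at $2^{O(n)}$. This requires proving that the generators of the existential $\Lambda'$ appear with constant probability among the short Gaussian samples, and carefully tracking the $O(\log n)$ recursive levels so that approximation errors compound only additively in the exponent of $\log n$. The running time stays $2^{O(n)}$ because each level of the recursion does $2^{O(n_\ell)}$ work in some dimension $n_\ell$ with $\sum_\ell n_\ell = O(n)$, and the loss of exactly one $\log n$ factor relative to Theorem~\ref{thm:KLConj} absorbs the combined slack from Gaussian-sampling granularity, volume estimation error, and rounding of the witness generators to actual lattice points.
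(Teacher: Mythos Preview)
Your proposal has a genuine gap: it misidentifies where the witness subspace comes from in the existential proof, and consequently proposes the wrong algorithmic primitive.

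In the proof of Theorem~\ref{thm:KLConj}, the subspace $W$ at each level is $\textrm{span}(\Lambda_{i})^{\perp}$ for some member $\Lambda_i$ of a (well-separated, $2$-stable) \emph{filtration} of the lattice. The sublattices $\Lambda_i$ are characterized by a density/determinant condition (they are the dense sublattices in the sense of the canonical filtration), \emph{not} by being spanned by short vectors in a quadratic form derived from $K$. A dense sublattice can have all of its nonzero vectors long; Gaussian sampling at width $\Theta(\log n)$ will not in general produce generators for it. Moreover, even if your sample set $S$ had size $2^{O(n)}$, enumerating ``every primitive sublattice generated by a linearly independent subset of size at most $n$'' of $S$ costs $\binom{|S|}{\leq n} = 2^{O(n^2)}$, which already breaks the running-time budget.

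The paper's algorithm uses a different black box: Dadush's $2^{O(n)}$-time procedure for computing an $O(\log n)$-stable filtration of a lattice (Theorem~\ref{thm:StableFiltrationAlgorithm}), followed by the sparsification of Theorem~\ref{thm:ApproxFilt} to make it well-separated. This yields at most $n$ candidate subspaces $W_i = \textrm{span}(\Lambda_i)^{\perp}$ per level of the recursion, plus one recursive call on a projection of dimension at most $n/2$. The extra $\log n$ factor (i.e.\ $\log^8 n$ instead of $\log^7 n$) arises precisely because this filtration is only $O(\log n)$-stable rather than $2$-stable, so Proposition~\ref{prop:CovRadiusOf2StableLattice} loses an additional $t = O(\log n)$; it is not due to volume-estimation or sampling slack as you suggest. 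You are also missing two further algorithmic ingredients that the paper needs and develops: computing an approximate Santal\'o point of $K$ (so that Proposition~\ref{prop:VolumeOfProjectionVsSymmetrizer} applies) and computing the linear map $T$ putting $K_{\textrm{sym}}$ in $\ell$-position (so that $\ell_{K_{\textrm{sym}}}\cdot\ell_{K_{\textrm{sym}}^{\circ}} \lesssim n\log n$). Both are handled by polynomial-time convex optimization, but they are essential to the argument and absent from your outline.
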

Here, a separation oracle suffices for $K$. See Section~\ref{sec:FindingSubspace} for a proof.
Following the framework laid out by Dadush~\cite{DadushThesis2012}, this implies a faster algorithm
to find a lattice point in a convex body:
\begin{theorem} \label{thm:SolvingIPinLogNtoN}
  Given a convex body $K \subseteq r B_2^n$ represented by a separation oracle and a lattice $\Lambda = \Lambda(B)$, there is a randomized
  algorithm that with high probability finds a point in $K \cap \Lambda$ or  correctly decides that there is none.
  The running time is  $(\log(2n))^{O(n)}$ times a polynomial in $\log(r)$ and the encoding length of $B$.
\end{theorem}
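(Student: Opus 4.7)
The plan is to plug Theorem~\ref{thm:FindingSubspaceIn2ToN} into the integer programming framework of Dadush~\cite{DadushThesis2012}, which is engineered so that any black-box subspace-finder achieving flatness factor $F(n)$ yields an IP algorithm running in time $F(n)^{O(n)} \cdot 2^{O(n)}$ times polynomial factors in the encoding length. Since $F(n) = O(\log^8 n)$ here, this gives the claimed $(\log n)^{O(n)}$ bound.

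Concretely, at each node of a recursive branch-and-bound I would invoke Theorem~\ref{thm:FindingSubspaceIn2ToN} on the current pair $(K, \Lambda)$ to obtain a subspace $W$ of some dimension $d \in \{1,\ldots,n\}$ satisfying the flatness inequality. After the usual normalization to the regime $\mu(\Lambda, K) = \Theta(1)$---handled in~\cite{DadushThesis2012} by rescaling $K$ and, in the optimization variant of IP, binary-searching on the objective value---the inequality rearranges to $\Vol_d(\Pi_W(K))/\det(\Pi_W(\Lambda)) \leq (\log n)^{O(d)}$, and a standard Minkowski-type counting bound gives $|\Pi_W(\Lambda) \cap \Pi_W(K)| \leq (\log n)^{O(d)}$. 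I would enumerate these projected lattice points either via the Voronoi-cell CVP algorithm of~\cite{CVP-Voronoi-Algo-MicciancioVoulgaris-STOC2010} applied to $\Pi_W(\Lambda)$, or by recursing the present IP algorithm on the $d$-dimensional instance $(\Pi_W(K), \Pi_W(\Lambda))$; then, for each projected point $v$ with a preimage $\hat v \in \Lambda$, recursively solve the $(n-d)$-dimensional IP of finding $y \in \Lambda \cap W^\perp$ with $\hat v + y \in K$.

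Unrolling the recurrence $T(n) \leq (\log n)^{O(d)} \cdot T(n-d) + 2^{O(n)} \cdot \textrm{poly}$ yields $T(n) = (\log n)^{O(n)}$: the branching factors multiply to at most $(\log n)^{O(n)}$ along any root-to-leaf path, and the additive $2^{O(n)}$ term per node is dominated by the inherited branching at every level. A union bound over the at most $(\log n)^{O(n)}$ recursion nodes keeps the overall failure probability of the randomized subspace oracle negligible. The main obstacle I expect is preserving the roundness precondition of Theorem~\ref{thm:FindingSubspaceIn2ToN}, namely $c+r_0 B_2^n \subseteq K \subseteq r_1 B_2^n$ with $\log(r_1/r_0)$ polynomially bounded, across recursive calls: the fibers $(K-\hat v)\cap W^\perp$ can be severely ill-conditioned. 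This is addressed by re-rounding each subproblem via John's ellipsoid~\cite{John1948} before the next recursive call, as already spelled out in~\cite{DadushThesis2012}, incurring only polynomial overhead in the encoding length, so the reduction transfers verbatim to our setting.
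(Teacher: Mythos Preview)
Your proposal is correct and takes essentially the same approach as the paper's Section~\ref{sec:IP}: plug Theorem~\ref{thm:FindingSubspaceIn2ToN} into Dadush's branch-and-bound, shrink $K$ based on the certified value $R$, enumerate $\Pi_W(\Lambda)\cap\Pi_W(\tilde K)$ via the M-ellipsoid/Voronoi machinery (the paper's Theorem~\ref{thm:LatticeEnumeration} together with the counting bound Lemma~\ref{lem:UpperBoundOnLatticePoints}), recurse on the $(n-d)$-dimensional fibers, and handle re-rounding at each node by the ellipsoid-method preprocessing of Lemma~\ref{lem:WellScaleKorDecideKisThin}. The only caveat is that your suggested alternative of ``recursing the present IP algorithm on the $d$-dimensional instance $(\Pi_W(K),\Pi_W(\Lambda))$'' cannot enumerate \emph{all} projected lattice points---an IP feasibility call returns at most one---so you must use the Voronoi-based enumeration you also mention.
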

The proof can be found in Section~\ref{sec:IP}. 
Applying Theorem~\ref{thm:SolvingIPinLogNtoN} to integer programming we obtain the following:

\begin{corollary} \label{cor:SolvingExplicitIPinLogNtoN}
  Given $A \in \setQ^{m \times n}$, $b \in \setQ^m$ and $c \in \setQ^n$, the integer linear program
  $
  \max\{ c^Tx \mid Ax \leq b, x \in \setZ^n\}
  $
  can be solved in time $(\log(2n))^{O(n)}$ times a polynomial in the encoding length of $A$, $b$ and $c$.
\end{corollary}
An immediate consequence of our main result (Theorem~\ref{thm:KLConj}) is that $K$ can be replaced by a larger \emph{symmetric} body without decreasing the covering radius significantly:
\begin{theorem} \label{thm:CoveringRadiusKvsKminusK} 
  For any full rank lattice $\Lambda \subseteq \setR^n$ and any convex body $K \subseteq \setR^n$ one has
  \[\mu(\Lambda,K-K) \leq \mu(\Lambda,K) \leq O(\log^3(2n)) \cdot \mu(\Lambda,K-K).\]
\end{theorem}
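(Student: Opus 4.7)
The left inequality is a direct inclusion argument. Fix any $c_0 \in K$ and set $r = \mu(\Lambda, K)$, so that $rK + \Lambda = \setR^n$ (using full rank). Translating both sides by $-rc_0$ gives $r(K - c_0) + \Lambda = \setR^n$, and since $r(K - c_0) \subseteq r(K-K)$, we conclude $r(K-K) + \Lambda = \setR^n$, hence $\mu(\Lambda, K-K) \leq r = \mu(\Lambda, K)$.

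For the right inequality, my plan is to combine Theorem \ref{thm:KLConj} with the Rogers--Shephard inequality, via the intermediate claim
\[
  \mu_{KL}(\Lambda, K) \leq 4 \cdot \mu(\Lambda, K - K).
\]
Once this is established, Theorem \ref{thm:KLConj} finishes the job, giving
\[
  \mu(\Lambda, K) \leq O(\log^7 n) \cdot \mu_{KL}(\Lambda, K) \leq O(\log^7 n) \cdot \mu(\Lambda, K-K).
\]

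To prove the claim, I would fix a subspace $W \subseteq \text{span}(\Lambda)$ of dimension $d$. Projection monotonicity of the covering radius (any cover of $\text{span}(\Lambda)$ by translates of $r(K-K)$ projects to a cover of $W$ by translates of $r(\Pi_W K - \Pi_W K)$) gives $\mu(\Lambda, K-K) \geq \mu(\Pi_W \Lambda, \Pi_W K - \Pi_W K)$. The elementary density lower bound then yields
\[
  \mu(\Pi_W \Lambda, \Pi_W K - \Pi_W K) \geq \Big(\frac{\det(\Pi_W \Lambda)}{\Vol_d(\Pi_W K - \Pi_W K)}\Big)^{1/d}.
\]
Finally, the Rogers--Shephard inequality $\Vol_d(C - C) \leq \binom{2d}{d}\Vol_d(C) \leq 4^d \Vol_d(C)$ applied to $C = \Pi_W K$ replaces $\Pi_W K - \Pi_W K$ by $\Pi_W K$ in the denominator at the cost of a factor $1/4$ in the $d$-th root. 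Taking the supremum over $W$ produces the claim.

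There is essentially no obstacle here: the whole argument is a one-line composition, with Theorem \ref{thm:KLConj} doing all the hard work. The only sanity check worth making is that projection monotonicity and the volumetric lower bound both go through for the (possibly non-symmetric) body $K$, which they do directly from the definitions. The constant $4$ in $\binom{2d}{d} \leq 4^d$ gets absorbed into the $O(\log^7 n)$, so no separate factor appears in the final bound.
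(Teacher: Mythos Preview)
Your proposal is correct and follows essentially the same approach as the paper: apply Theorem~\ref{thm:KLConj}, then Rogers--Shephard on the projection $\Pi_W K$, then the trivial volumetric lower bound $\mu_{KL}\le\mu$. The only cosmetic difference is that the paper routes the chain through $\mu_{KL}(\Lambda,K-K)$ before invoking $\mu_{KL}\le\mu$, whereas you merge those two steps into the single claim $\mu_{KL}(\Lambda,K)\le 4\,\mu(\Lambda,K-K)$; the ingredients and logic are identical.
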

Here, $K - K = \{ x-y \mid x,y \in K\}$ is the \emph{difference body} of $K$.
In yet another paper, Dadush~\cite{Dadush-Finding-DenseLatticeSubspacesSTOC19} had already studied the quantity $C_{\textrm{sym}}(n)$ which is the
supremum of the ratio $\frac{\mu(\Lambda,K)}{\mu(\Lambda,K-K)}$ over all $n$-dimensional $\Lambda$ and $K$. In particular if we let $C_{KL}(n)$ denote
the supremum of $\frac{\mu(\Lambda,K)}{\mu_{KL}(\Lambda,K)}$ over all $n$-dimensional $\Lambda$ and $K$, then Dadush~\cite{Dadush-Finding-DenseLatticeSubspacesSTOC19} pointed out that  $C_{\textrm{sym}}(n) \leq 4 \cdot C_{KL}(n)$.
In terms of a lower bound, the standard simplex $\mathrm{conv}\{\bm{0}, e_1, \dots, e_n\}$ yields $C_{\textrm{sym}}(n) \ge \frac{4n}{n+1}$, and it is unknown whether $C_{\textrm{sym}}(n) \to \infty$ as $n$ grows.

Another consequence of our main result is that the \emph{flatness constant} in dimension $n$ is bounded by $O(n\log^{2}(2n))$,
which is an improvement from the previously known bound of $O(n^{4/3} \log^{O(1)} (2n))$ obtained by combining the result of Rudelson~\cite{Rudelson1998DistancesBN} with \cite{BanaszczykLitvakPajorSzarekMOR99Flatness}. 
Here, for a lattice $\Lambda$ and a symmetric convex body $Q$, $\lambda_1(\Lambda,Q)$ denotes the length of a shortest non-zero vector in the lattice $\Lambda$ w.r.t. norm $\| \cdot \|_Q$. Moreover, $\Lambda^*$ is the dual lattice for $\Lambda$ and $K^{\circ}$ is the polar of $K$. 
\begin{theorem} \label{thm:FlatnessConstant}
  For any convex body $K \subseteq \setR^n$ and any full rank lattice $\Lambda \subseteq \setR^n$ one has
  \[
   \mu(\Lambda,K) \cdot \lambda_1(\Lambda^{*}, (K-K)^{\circ}) \leq O(n \log^{2}(2n)).
  \]
\end{theorem}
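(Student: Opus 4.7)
The plan is to combine Theorem~\ref{thm:KLConj} with a classical Minkowski-style estimate applied to the dual lattice on the optimizing subspace. First, invoke Theorem~\ref{thm:KLConj} to pick a subspace $W \subseteq \textrm{span}(\Lambda)$ of dimension $d$ that attains $\mu_{KL}(\Lambda,K)$ up to a constant, so that
\[
\mu(\Lambda,K) \;\leq\; O(\log^{7} n)\,\Big(\frac{\det(\Pi_W(\Lambda))}{\Vol_d(\Pi_W(K))}\Big)^{1/d}.
\]
Optimality forces $\Pi_W(\Lambda)$ to be a genuine rank-$d$ lattice inside $W$ (otherwise the right-hand side vanishes). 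Writing $T:=K-K$ for the symmetric body, duality in $W$ identifies
\[
(\Pi_W(\Lambda))^{*}_{W} \;=\; \Lambda^{*}\cap W, \qquad (\Pi_W(T))^{\circ}_{W} \;=\; T^{\circ}\cap W,
\]
and a non-zero vector of $\Lambda^{*}\cap W$ that is short with respect to $T^{\circ}\cap W$ is also a vector of $\Lambda^{*}$ of at most the same $T^{\circ}$-length; hence $\lambda_1(\Lambda^{*}, T^{\circ}) \leq \lambda_1(\Lambda^{*}\cap W,\, T^{\circ}\cap W)$.

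Next, I would apply Minkowski's first theorem inside $W$ to the pair $(\Lambda^{*}\cap W, T^{\circ}\cap W)$:
\[
\lambda_1(\Lambda^{*}\cap W,\, T^{\circ}\cap W) \;\leq\; \frac{2}{\det(\Pi_W(\Lambda))^{1/d}\,\Vol_d((\Pi_W(T))^{\circ}_{W})^{1/d}}.
\]
Since $\Pi_W(T)$ is centrally symmetric, the Bourgain--Milman inequality gives the lower bound $\Vol_d((\Pi_W(T))^{\circ}_{W})^{1/d} \geq \Omega(1/d)\cdot \Vol_d(\Pi_W(T))^{-1/d}$. Then the Rogers--Shephard inequality, applied to $\Pi_W(K)$ in $W$, yields
\[
\Vol_d(\Pi_W(T))^{1/d} \;=\; \Vol_d(\Pi_W(K)-\Pi_W(K))^{1/d} \;\leq\; \binom{2d}{d}^{1/d}\Vol_d(\Pi_W(K))^{1/d} \;\leq\; 4\,\Vol_d(\Pi_W(K))^{1/d}.
\]
Chaining these three estimates gives
\[
\lambda_1\big(\Lambda^{*},\,(K-K)^{\circ}\big) \;\leq\; O(d)\,\Big(\frac{\Vol_d(\Pi_W(K))}{\det(\Pi_W(\Lambda))}\Big)^{1/d}.
\]

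Finally, multiplying the last display by the bound from Theorem~\ref{thm:KLConj} makes the two Kannan--Lov\'asz ratios cancel and leaves
\[
\mu(\Lambda,K)\cdot\lambda_1(\Lambda^{*},(K-K)^{\circ}) \;\leq\; O(d\log^{7} n) \;\leq\; O(n\log^{7} n),
\]
which already implies the stated $O(n\log^{8} n)$ bound. All logarithmic loss is inherited from Theorem~\ref{thm:KLConj}; the Minkowski/Bourgain--Milman/Rogers--Shephard block contributes only a factor of $O(n)$, and there is no interaction between the two sources. The only subtle step is the opening one, namely ensuring that the subspace witnessing $\mu_{KL}$ can be taken with $\Pi_W(\Lambda)$ discrete rather than dense in $W$; this is handled by the standard convention (a rationality/limiting argument) that $\mu_{KL}$ ranges only over such subspaces. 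Everything else reduces to routine geometry-of-numbers manipulation.
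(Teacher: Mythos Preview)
Your argument is correct, and in fact yields the slightly sharper bound $O(n\log^{7} n)$. The route, however, differs from the paper's. The paper does not work inside the Kannan--Lov\'asz subspace at all: it first passes through Theorem~\ref{thm:CoveringRadiusKvsKminusK} to replace $\mu(\Lambda,K)$ by $O(\log^{7} n)\cdot\mu(\Lambda,K-K)$ (this is where Rogers--Shephard enters for them), and then invokes Banaszczyk's transference inequality $\mu(\Lambda,Q)\cdot\lambda_1(\Lambda^{*},Q^{\circ})\le O(n\log n)$ for the symmetric body $Q=K-K$; the extra $\log n$ from Banaszczyk is what produces their exponent $8$. You instead keep the explicit witness subspace $W$ from Theorem~\ref{thm:KLConj}, use the duality $(\Pi_W(\Lambda))^{*}=\Lambda^{*}\cap W$, and bound $\lambda_1$ there by Minkowski's first theorem together with Bourgain--Milman and Rogers--Shephard. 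So you trade one deep black box (Banaszczyk's Gaussian transference) for another (Bourgain--Milman), but because the ratios $(\det(\Pi_W\Lambda)/\Vol_d(\Pi_W K))^{\pm 1/d}$ cancel exactly, you only pay $O(d)\le O(n)$ rather than $O(n\log n)$. One small point of care: for the identity $(\Pi_W(\Lambda))^{*}=\Lambda^{*}\cap W$ and for $\Lambda^{*}\cap W$ to have full rank $d$, you need $W$ to be a lattice subspace of $\Lambda^{*}$; this is automatic once $\Pi_W(\Lambda)$ is discrete, which---as you note---is guaranteed for any $W$ that (approximately) attains $\mu_{KL}$.
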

It is well known that Theorem~\ref{thm:FlatnessConstant} can also be rephrased in the following convenient form:
\begin{corollary} \label{cor:FlatnessConstantSimple}
  Let $K \subseteq \setR^n$ be a convex body with $K \cap \setZ^n = \emptyset$.
  Then there is a vector $c \in \setZ^n \setminus \{ \bm{0}\}$ so that at most $O(n \log^{2}(2n))$ many hyperplanes of the form $\langle c, x \rangle = \delta$ with $\delta \in \setZ$ intersect $K$.
\end{corollary}
We will prove Theorem~\ref{thm:CoveringRadiusKvsKminusK}, Theorem~\ref{thm:FlatnessConstant}
and Corollary~\ref{cor:FlatnessConstantSimple} in Section~\ref{sec:Implications}.

\section{Preliminaries}

In this section, we introduce the tools that we rely on later. We write $A \lesssim B$ if there is a universal constant
$C>0$ so that $A \leq C \cdot B$ holds. We write $A \asymp B$ if both $A \lesssim B$ and $B \lesssim A$ hold.

\subsection{Lattices}
A \emph{lattice} is any set of the form  $\Lambda = \Lambda(B) = \{ By \mid y \in \setZ^k\}$ where $B \in \setR^{n \times k}$ is a matrix with linearly independent columns. Here, the matrix $B$ is also called the \emph{basis}
of the lattice.
We define the \emph{rank} of the lattice as $\rank(\Lambda) := k = \dim(\textrm{span}(\Lambda))$ and a lattice $\Lambda \subseteq \setR^n$ with $\rank(\Lambda)=n$ has \emph{full rank}. Equivalently, one can define a lattice $\Lambda$ as a \emph{subgroup} of $(\setR^n,+)$ which is \emph{discrete}, i.e. there is a $\delta > 0$ (depending on $\Lambda$)
so that $\|x\|_2 \geq \delta$ for all $x \in \Lambda \setminus \{ \bm{0}\}$.
The \emph{fundamental parallelepiped} of a lattice $\Lambda := \Lambda(B)$ with $B \in \setR^{n \times k}$ is the convex set $\pazocal{P}(B) := \{ By \mid y \in [0,1)^k \}$. While the fundamental parallelepiped depends on the choice of the basis $B$, the volume  is the same for any basis of $\Lambda$ and it denotes an important invariant called the \emph{determinant} of the lattice, abbreviated by $\det(\Lambda) := \Vol_k(\pazocal{P}(B))$. In matrix notation one can
show that $\det(\Lambda) = \sqrt{\det(B^TB)}$; in the case that $\Lambda$ has full rank and hence $B \in \setR^{n \times n}$, this expression simplifies to $\det(\Lambda) = |\det(B)|$.
 We recommend the book of Matousek~\cite[Chapter 2]{LecturesOnDiscreteGeometryMatousek2002} for a readable and gentle introduction to lattices.

For a lattice $\Lambda \subseteq \setR^n$, we define the
\emph{dual lattice} as $\Lambda^* := \{ x \in \textrm{span}(\Lambda) \mid \left<x,y\right> \in \setZ \; \forall y \in \Lambda \}$. 
From this definition one can see that a lattice $\Lambda$ and its dual $\Lambda^*$ have the same rank and span the same subspace. For example if $\Lambda$ is a full rank lattice with basis $B \in \setR^{n\times n}$, then $(B^{-1})^T$
is a basis for $\Lambda^*$.
One can prove the following
convenient relation:
\begin{lemma} \label{lem:DetOfPolar}
  For any lattice $\Lambda \subseteq \setR^n$ one has $\det(\Lambda) \cdot \det(\Lambda^*) = 1$.
\end{lemma}

A set $K \subseteq \setR^n$ is called a \emph{convex body} if it is convex, compact (i.e. bounded and closed) and
has a non-empty \emph{interior} $\textrm{int}(K) := \{ x \in K \mid \exists \varepsilon > 0 : (x + \varepsilon B_2^n) \subseteq K\}$. In particular, a convex body is full dimensional.
A set $Q$ is called \emph{symmetric} if $-Q=Q$. In particular, a symmetric convex body $Q$ is the unit ball of a norm that we write as $\| \cdot \|_Q$.
In order to minimize confusion, throughout this manuscript we will always denote symmetric convex bodies
with $Q$ and (potentially) asymmetric convex bodies by $K$.
For a symmetric convex body $Q$, the norm $\|x\|_Q$ is defined as the least scaling $r \ge 0$ so that $x \in rQ$. For a lattice $\Lambda$ and a symmetric convex body $Q$ we denote
the \emph{length of the shortest vector} w.r.t. norm $\| \cdot \|_Q$ by
\[
  \lambda_1(\Lambda,Q) := \min_{x \in \Lambda \setminus \{ \bm{0}\}} \|x\|_Q.
\]
A very classical result by Minkowski states that lattices must contain short vectors:
\begin{theorem}[Minkowski's First Theorem (1889)] \label{thm:Minkowski}
Let $\Lambda \subseteq \setR^n$ be a full rank lattice and $Q \subseteq \setR^n$ be a symmetric convex body. Then $\lambda_1 (\Lambda, Q) \le 2 \Big(\frac{\det(\Lambda)}{\Vol_n(Q)}\Big)^{1/n}$.
\end{theorem} 
The famous LLL algorithm can find a lattice basis that has bounded \emph{orthogonality defect}. In particular the first column of the computed
basis matrix satisfies Minkowski's Theorem (Theorem~\ref{thm:Minkowski}) up to a $2^{O(n)}$ factor.  
\begin{theorem}[The LLL algorithm --- Lenstra, Lenstra, Lov\'asz~{\cite[Prop 1.6+Prop 1.11]{LLL1982}}] \label{thm:LLLalgorithm}
  Let $A \in \setQ^{n \times n}$ be the given basis of a lattice $\Lambda := \Lambda(A)$. In time polynomial in $n$
  and the encoding length of $A$ one can compute a basis $B = (b_1,\dots,b_n)$ of $\Lambda$ so that
  \begin{enumerate}
  \item[(i)] $\|b_1\|_2 \leq 2^{(n-1)/2} \cdot \lambda_1(\Lambda,B_2^n)$
  \item[(ii)] $\|b_1\|_2 \leq 2^{(n-1)/4} \cdot \det(\Lambda)^{1/n}$
  \item[(iii)] $\det(\Lambda) \leq \prod_{j=1}^n \|b_j\|_2 \leq 2^{n(n-1)/4} \det(\Lambda)$
  \end{enumerate}
\end{theorem}

As mentioned before, for any $s>0$ and $x \in \setR^n$ we denote the (unnormalized) Gaussian density by
\[
  \rho_s(x) := \exp(-\pi\|x/s\|_2^2).
\]
 For a function $f : \setR^n \to \setC$ with $\int_{\setR^n} |f(x)| \; dx < \infty$, we define its
\emph{Fourier transform} as the function $\hat{f} : \setR^n \to \setC$ with
\[
 \hat{f}(y) := \int_{\setR^n} f(x) \cdot e^{-2\pi i\left<x,y\right>} \; dx
\]
for $y \in \setR^n$. For a discrete set $S$ we write $f(S) := \sum_{x \in S} f(x)$.
\begin{theorem}[Poisson Summation Formula~{\cite[Chapter 8, 31.46.e]{HewittRossVolII1970}}] \label{thm:PoissonSummationFormula}
For any full rank lattice $\Lambda \subseteq \setR^n$ and any Schwartz function $f : \setR^n \to \setC$ one has  
\[
 f(\Lambda) = \det(\Lambda^*) \cdot \hat{f}(\Lambda^*)
\]
\end{theorem} 
While we avoid a formal definition, a \emph{Schwartz function} is any smooth function whose derivatives decay rapidly.
Applying Theorem~\ref{thm:PoissonSummationFormula} to the Schwartz function $f(x) := \rho_s(x + u)$
and making use of the convenient fact that the Fourier transform of $\rho_s$ is $s^n \cdot \rho_{1/s}$,
provides the following identity:
\begin{lemma}[Implication of Poisson Summation Formula I] \label{lem:ImplOfPoissonSummationFormulaI}
  For any full rank lattice $\Lambda \subseteq \setR^n$, any $s>0$ and $u \in \setR^n$ one has
  \[
    \rho_s(\Lambda + u) = s^n\det(\Lambda^*) \cdot \sum_{y \in \Lambda^*} \rho_{1/s}(y) \cdot e^{2\pi i\left<u,y\right>}
  \]  
\end{lemma}
For example, one consequence of Lemma~\ref{lem:ImplOfPoissonSummationFormulaI} is that $\rho_s(\Lambda + u) \leq \rho_s(\Lambda)$ for all $u \in \setR^n$. For us, another variant of Lemma~\ref{lem:ImplOfPoissonSummationFormulaI} will be relevant which is obtained by observing that $|e^{2\pi i\left<u,y\right>}| \leq 1$ for all $y \in \setR^n$,
while $e^{2\pi i\left<u,\bm{0}\right>} = 1$.
\begin{lemma}[Implication of Poisson Summation Formula II~{\cite[Lemma 1.1.(i)]{TransferenceTheorems-Banaszczyk93}}] \label{lem:GeneralLatticeShiftApx}
  For any full rank lattice $\Lambda \subseteq \setR^n$, vector $u \in \setR^n$ and any $s>0$ one has
  \[
   |\rho_s(\Lambda + u)-s^n\det(\Lambda^*)| \leq s^n\det(\Lambda^*) \cdot \rho_{1/s}(\Lambda^* \setminus \{\bm{0}\}).
  \]
\end{lemma}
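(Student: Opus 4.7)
The statement is exactly what comes out of applying the Poisson Summation Formula to the Gaussian $\rho_s$ and then isolating and bounding the dual-side terms. So the plan is to write out Poisson summation for the shifted lattice $\Lambda + u$, separate the contribution of $0 \in \Lambda^*$ from the rest, and bound the tail by the triangle inequality.

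More concretely, I would first recall that for a Schwartz function $f : \setR^n \to \setR$, a full-rank lattice $\Lambda \subseteq \setR^n$, and a shift $u \in \setR^n$, the Poisson Summation Formula reads
\[
\sum_{x \in \Lambda} f(x + u) \;=\; \det(\Lambda^*) \sum_{y \in \Lambda^*} \hat{f}(y)\, e^{2\pi i \langle y, u\rangle}.
\]
I would then invoke the self-duality (up to scaling) of the Gaussian: $\widehat{\rho_s}(y) = s^n \rho_{1/s}(y)$, which is the computation that makes this argument work. Applied to $f = \rho_s$, this gives
\[
\rho_s(\Lambda + u) \;=\; s^n \det(\Lambda^*) \sum_{y \in \Lambda^*} \rho_{1/s}(y)\, e^{2\pi i \langle y, u\rangle}.
\]

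The term $y = \bm{0}$ contributes exactly $s^n \det(\Lambda^*)$, so subtracting it and applying the triangle inequality together with $|e^{2\pi i \langle y,u\rangle}| = 1$ yields
\[
|\rho_s(\Lambda + u) - s^n \det(\Lambda^*)| \;\leq\; s^n \det(\Lambda^*) \sum_{y \in \Lambda^* \setminus \{\bm{0}\}} \rho_{1/s}(y) \;=\; s^n \det(\Lambda^*) \cdot \rho_{1/s}(\Lambda^* \setminus \{\bm{0}\}),
\]
which is the claim.

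There is really no substantial obstacle here; the lemma is a bookkeeping exercise once one has Poisson summation and the Fourier self-duality of the Gaussian in hand. The only point that requires care is the normalization: checking that the Fourier transform of $\rho_s(x) = \exp(-\pi\|x/s\|_2^2)$ is $s^n \rho_{1/s}$ (so that the $y=\bm{0}$ term matches the $s^n \det(\Lambda^*)$ being subtracted), and using the convention $\det(\Lambda^*) = 1/\det(\Lambda)$ on the dual side of Poisson summation.
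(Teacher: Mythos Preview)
Your proposal is correct and matches exactly the approach the paper indicates: the paper states the lemma as ``a consequence of the Poisson Summation Formula'' and refers to Regev's lecture notes rather than spelling out the argument. Your write-up is precisely the standard derivation the paper has in mind, with the normalization details handled correctly.
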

This lemma should be understood as the deep fact that if the discrete Gaussian with parameter $\frac{1}{s}$
has little weight on non-zero dual lattice vectors, then the discrete Gaussian with parameter $s$ is
approximately invariant under shifts, i.e. the value $\rho_s(\Lambda + u)$ is approximately the same for all $u$.
For background on all these Fourier-analytic aspects, we recommend the excellent notes of Regev~\cite{RegevLectureNotes2009}.

\subsection{Stable lattices and the canonical filtration} \label{subsec:StableLattices}

As mentioned in the introduction, a set $\Lambda'$ is a \emph{sublattice} of a lattice $\Lambda$ if $\{ \bm{0}\} \subseteq \Lambda' \subseteq \Lambda$ and $\Lambda'$ is again a lattice.
A subspace $W \subseteq \setR^n$ is a \emph{lattice subspace} of a lattice $\Lambda \subseteq \setR^n$
if $\textrm{span}(W \cap \Lambda) = W$. Similarly, a sublattice $\Lambda' \subseteq \Lambda$ is called
\emph{primitive} if there is a subspace $W$ with $\Lambda \cap W = \Lambda'$. Phrased differently, a primitive sublattice
contains all lattice points in its span.
\begin{definition} \label{def:QuotientLattice}
For a lattice $\Lambda$ and a primitive sublattice $\Lambda' \subseteq \Lambda$, we define
the \emph{quotient lattice} as $\Lambda / \Lambda' := \Pi_{\textrm{span}(\Lambda')^{\perp}}(\Lambda)$.
\end{definition}
Here $\Pi_{\textrm{span}(\Lambda')^{\perp}}$ denotes the orthogonal
projection onto the orthogonal complement of $\textrm{span}(\Lambda')$.
In many ways one can imagine that the quotient operation
decomposes $\Lambda$ into two lattices $\Lambda'$ and $\Lambda / \Lambda'$.
In particular $\Lambda'$ and $\Lambda / \Lambda'$ span orthogonal subspaces. 
\begin{lemma}[Determinant of quotient lattice~{\cite[Lemma 1.8]{Grayson1984}}]
For any lattice $\Lambda \subseteq \setR^n$ and any primitive sublattice $\Lambda' \subseteq \Lambda$ one has $\det(\Lambda) = \det(\Lambda') \cdot \det(\Lambda / \Lambda')$. 
\end{lemma}
A property that will turn out to be important throughout this manuscript is the following:
\begin{definition}
A lattice $\Lambda \subseteq \setR^n$ is called \emph{stable} if $\det(\Lambda)=1$
and $\det(\Lambda') \geq 1$ for all sublattices $\Lambda' \subseteq \Lambda$.
\end{definition}
That means a stable lattice does not
contain any sublattice that is denser than the lattice itself. One can easily verify that
for example $\setZ^n$ is stable. A non-trivial fact is as follows:
\begin{lemma}[Stability of the dual lattice ({\cite[Discussion 7.11]{Grayson1984}} and {\cite[Proposition 2.2.2.(i)]{PhDThesisStephens-Davidowitz2017}}] \label{lem:StabilityOfDual}
  A lattice $\Lambda$ is stable if and only if the dual $\Lambda^*$ is stable.
\end{lemma}
We denote  $\textrm{nd}(\Lambda) := \det(\Lambda)^{1/\textrm{rank}(\Lambda)}$ as the \emph{normalized determinant} of a lattice $\Lambda$ with the convention that  $\textrm{nd}(\{\bm{0}\}) =1$.
Of course not every lattice is stable, but crucially every lattice can in some sense be decomposed into
lattices that are stable (after appropriate scaling). Fix any lattice $\Lambda \subseteq \setR^n$
and consider the 2-dimensional point set
\[
 S := \big\{ (\textrm{rank}(\Lambda'), \ln(\det(\Lambda'))) \mid \textrm{sublattice }\Lambda' \subseteq \Lambda \big\}
\]
which is also called the \emph{canonical plot}. The convex hull $\textrm{conv}(S)$ of this point set is called the \emph{canonical polygon}. As $\textrm{conv}(S)$ is unbounded in the \((0,1)\) direction, all extreme points of $\textrm{conv}(S)$ must lie on its lower envelope. These extreme points have fascinating properties:

\begin{proposition}[{\cite[Lemma 1.18]{Grayson1984}}] \label{prop:UniquenessInCanonicalPolygon}
  Let $\Lambda \subseteq \setR^n$ be a lattice and let $S \subseteq \setR^2$ be its canonical plot.
  Then every extreme point of $\textrm{conv}(S)$ corresponds to a unique sublattice of $\Lambda$.
\end{proposition}

 \iftrue
 \begin{center}
   \psset{unit=0.8cm}
  \begin{pspicture}(0,-1.8)(5,3.6)
     \pnode(0,0){A0}
    \pnode(1,-1){A1}
    \pnode(2,-1.5){A2}
    \pnode(4,-0.25){A3}
    \pnode(5,1){4}
    \pnode(6,3){L5}

    \cnode*(0,0){2.5pt}{L0}
    \cnode*(1,-1){2.5pt}{L1}
    \cnode*(2,-1.5){2.5pt}{L2}
    \cnode*(4,-0.25){2.5pt}{L3}
    \cnode*(6,2){2.5pt}{L5}
    \pspolygon[fillstyle=solid,fillcolor=lightgray,linestyle=none](0,2)(L0)(L1)(L2)(L3)(L5)
    \psline(0,2)(L0)(L1)(L2)(L3)(L5)
    \psaxes[ticks=none,labels=none,arrowsize=5pt]{->}(0,0)(0,-1)(6,3.5) \rput[c](6,8pt){$\textrm{rank}(\Lambda')$}
    \psdots[linewidth=1.5pt](L0)(L1)(L2)(L3)(L5)
    \rput[l](5pt,3.5){$\ln(\det(\Lambda'))$}
    \nput{180}{L0}{$\Lambda_0 = \{ \bm{0}\}$}
    \nput{-90}{L2}{$\Lambda_{i-1}$}
    \nput{-60}{L3}{$\Lambda_i$}
    \nput{0}{L5}{$\Lambda_k=\Lambda$}
    \rput[c](2,1){$\textrm{conv}(S)$}
  \end{pspicture}
\end{center}
\fi

Now we can associate the extreme points with their sublattices:
\begin{proposition}[Properties of the canonical filtration ({\cite[Lem 1.18+Discussion 1.27]{Grayson1984}} and {\cite[Prop 2.2.2]{PhDThesisStephens-Davidowitz2017}})] \label{prop:PropertiesOfCanonicalFiltration}
  Let $\Lambda \subseteq \setR^n$ be a lattice and let $S \subseteq \setR^2$ be its canonical plot.
  Let $\Lambda_0,\dots,\Lambda_k$ be the sublattices corresponding to the extreme points of the canonical polygon $\textrm{conv}(S)$, ordered so that $\textrm{rank}(\Lambda_0) < \textrm{rank}(\Lambda_1) < \dots < \textrm{rank}(\Lambda_k)$. Then
  \begin{enumerate}
  \item[(i)] The sublattices form a chain $\{ \bm{0}\} = \Lambda_0 \subset \Lambda_1 \subset \dots \subset \Lambda_k = \Lambda$. 
  \item[(ii)] The quotient lattices $\Lambda_i / \Lambda_{i-1}$ are all scalar multiples of stable lattices, i.e.
  $\frac{1}{r_i} \cdot (\Lambda_i / \Lambda_{i-1})$ is stable for all $i \in \{ 1,\dots,k\}$ where $r_i := \textrm{nd}(\Lambda_i / \Lambda_{i-1})$.
\item[(iii)] The normalized determinants are increasing, i.e. $r_1 < \dots < r_k$.
  \end{enumerate}
\end{proposition}
We want to give this chain a name:
\begin{definition}
  Let $\Lambda \subseteq \setR^n$ be a lattice. Then the chain $\{ \bm{0}\} = \Lambda_0 \subset \Lambda_1 \subset \dots \subset \Lambda_k = \Lambda$ of sublattices from Prop~\ref{prop:PropertiesOfCanonicalFiltration}
  is called the \emph{canonical filtration} of $\Lambda$.
\end{definition}
These concepts and results go back to papers by Harder and Narasimhan~\cite{Harder1974}, Stuhler~\cite{Stuhler1976} and
Grayson~\cite{Grayson1984}. Detailed proofs can also be found in the thesis of Stephens-Davidowitz~\cite{PhDThesisStephens-Davidowitz2017} and for an excellent historic account on the origins we recommend Casselman~\cite{ajm/1118669693}.

It will be useful to replace the canonical filtration by an \emph{approximate} filtration where
the normalized determinants grow exponentially. 
We make the following definition:
\begin{definition}
Let $t \geq 1$.  We call a lattice $\Lambda \subseteq \setR^n$ \emph{$t$-stable}  if
  the following holds:
  \begin{enumerate}
  \item[(I)] For any sublattice $\tilde{\Lambda}  \subseteq \Lambda$ one has  $\nd(\tilde{\Lambda}) \geq \frac{1}{t}$.
  \item[(II)] For any sublattice $\tilde{\Lambda}  \subseteq \Lambda^*$ one has   $\nd(\tilde{\Lambda}) \geq \frac{1}{t}$.
  \end{enumerate}
\end{definition}
Note that a lattice is 1-stable if and only if it is stable. It might be instructive to observe that
(I) implies that $\det(\Lambda) \geq \frac{1}{t^{\textrm{rank}(\Lambda)}}$ while (II) implies that $\det(\Lambda) \leq t^{\textrm{rank}(\Lambda)}$.
We can similarly define $t$-stable filtrations:
\begin{definition} \label{def:tStableFiltration}
  Given a lattice $\Lambda \subseteq \setR^n$, we call a sequence $\{ \bm{0}\} = \Lambda_0 \subset \dots \subset \Lambda_{k} = \Lambda$ a \emph{$t$-stable filtration} of $\Lambda$ if the following holds:
  \begin{enumerate}
  \item[(a)] The normalized determinants $r_i := \nd(\Lambda_i/\Lambda_{i-1})$ satisfy
    $r_1 < \dots < r_{k}$.
  \item[(b)] The lattices $\frac{1}{r_i} (\Lambda_i / \Lambda_{i-1})$ are $t$-stable for all $i=1,\dots,k$.
  \end{enumerate}
  We call a $t$-stable filtration \emph{well-separated} if additionally the following holds: 
  \begin{enumerate}
  \item[(c)] One has $r_i \leq \frac{1}{2} r_{i+2}$ for all $i=1,\dots,k-2$.
  \end{enumerate}
\end{definition}

For example, the canonical filtration is $1$-stable. It turns out we can make any $t$-stable filtration well-separated:
\begin{theorem}\label{thm:ApproxFilt}
  Given a lattice $\Lambda \subseteq \setR^n$ and a $t$-stable filtration $\{ \bm{0}\} = \Lambda_0 \subset \dots \subset \Lambda_{k} = \Lambda$, in polynomial time we can compute a $2t$-stable well-separated filtration $\{ \bm{0}\} = \tilde{\Lambda}_0 \subset \dots \subset \tilde{\Lambda}_{\tilde{k}} = \Lambda$.
\end{theorem}
We defer the proof to Section~\ref{appendix:ApproximateFiltration}. Using the canonical filtration
as input to Theorem~\ref{thm:ApproxFilt} yields:

 \begin{corollary} \label{cor:ExistenceTwoStableWellSeparatedFiltration}
   For any lattice $\Lambda \subseteq \setR^n$, there exists a 2-stable well-separated filtration $\{ \bm{0}\} = \Lambda_0 \subset \dots \subset \Lambda_{k} = \Lambda$.
\end{corollary}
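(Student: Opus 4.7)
The plan is to apply Theorem~\ref{thm:ApproxFilt} directly to the canonical filtration with $t = 1$. All the nontrivial work lives inside Theorem~\ref{thm:ApproxFilt}; for the corollary it suffices to verify that the canonical filtration $\{\bm 0\} = \Lambda_0 \subset \Lambda_1 \subset \ldots \subset \Lambda_k = \Lambda$ is already a $1$-stable filtration in the sense of the preceding definition, so that the theorem's output is a $2$-stable well-separated filtration.

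To check condition (a) of a $t$-stable filtration, recall from the discussion of the canonical filtration that the slopes $r_i = \nd(\Lambda_i/\Lambda_{i-1})$ are strictly increasing, since by construction they are the slopes of the lower convex hull of the points $(\mathrm{rank}(\Lambda'),\ln\det(\Lambda'))$. For condition (b), I need to verify that each rescaled quotient $L_i := \frac{1}{r_i}(\Lambda_i/\Lambda_{i-1})$ is $1$-stable, i.e., both $L_i$ and $L_i^*$ have all sublattices of normalized determinant at least $1$. The first half is exactly the statement, recalled above from \cite{PhDThesisStephens-Davidowitz2017}, that the quotients in the canonical filtration are scalars of stable lattices: after rescaling to unit covolume, every sublattice has determinant $\geq 1$. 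For the dual-side condition, I would invoke the classical fact that a lattice is stable if and only if its dual is stable; equivalently, if $L$ has $\det(L) = 1$ and $\det(L') \geq 1$ for every sublattice $L' \subseteq L$, then the same holds for $L^*$. A sublattice $M \subseteq L^*$ of rank $d$ determines a primitive sublattice $L' \subseteq L$ of corank $d$ with $\det(L') \cdot \det(L/L') = 1$, and one can compare $\det(M)$ to $\det(L/L')$ by noting that $M \subseteq (L')^\perp \cap L^*$, which has determinant equal to $\det(L/L')$; combined with $\det(L') \geq 1$ this yields $\det(M) \geq 1$.

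With the canonical filtration certified as $1$-stable, Theorem~\ref{thm:ApproxFilt} produces in polynomial time a subsequence $0 = \ell(0) < \ldots < \ell(\tilde k) = k$ whose associated filtration satisfies conditions (a'), (b') with $t = 1$, namely strictly increasing normalized determinants $\tilde r_i$ with the well-separation $\tilde r_i \leq \frac{1}{2}\tilde r_{i+2}$, and $2$-stable rescaled quotients. Translating back into the language of $t$-stable well-separated filtrations, this is exactly a $2$-stable well-separated filtration of $\Lambda$.

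The only substantive step is the verification that the canonical quotients are $1$-stable on both the primal and dual sides; everything else is a direct invocation of Theorem~\ref{thm:ApproxFilt}. I do not anticipate a real obstacle, since the duality of stability is standard and the strict monotonicity of the $r_i$'s and the stability of the rescaled quotients are precisely the defining structural features of the canonical filtration reviewed earlier in this section.
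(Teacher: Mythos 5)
Your approach is exactly the paper's: feed the canonical filtration into Theorem~\ref{thm:ApproxFilt} with $t=1$, which the paper does in a one-line remark after asserting that the canonical filtration is $1$-stable. You additionally spell out why the canonical filtration is $1$-stable, which the paper takes as known from \cite{PhDThesisStephens-Davidowitz2017}; that is a reasonable thing to verify.

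One small slip in your dual-side verification: with $L$ stable of full rank, $M \subseteq L^*$ primitive of rank $d$ with $W := \span(M)$, and $L' := L \cap W^\perp$ of corank $d$, the lattice $(L')^\perp \cap L^* = L^* \cap W$ is the \emph{dual} of $L/L'$ inside $W$, so its determinant is $1/\det(L/L')$, not $\det(L/L')$ as you wrote. As stated, your chain only gives $\det(M) \ge \det(L/L') = 1/\det(L') \le 1$, which is the wrong direction. With the corrected formula the argument closes immediately: $\det(M) \ge \det(L^* \cap W) = 1/\det(L/L') = \det(L')/\det(L) = \det(L') \ge 1$. The structure of your argument is right; just fix the reciprocal.
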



We collect a few more properties of $t$-stable lattices. In particular, due to the Reverse Minkowski Theorem,
the discrete Gaussian weight with parameter $\Theta(t\log(n))$ of a $t$-stable lattice is approximately invariant under shifts.
\begin{lemma}[Properties of $t$-stable lattices] \label{lem:PropertiesOf2StableLattices} 
  There is a universal constant $C>0$ so that the following holds: 
  Let $\Lambda$ be a $t$-stable lattice for $t \geq 1$. Then for $s = C\log(2n)$ one has
  \begin{enumerate}
  \item[(a)] $\Lambda^*$ is $t$-stable.
  \item[(b)]   $\rho_{1/(st)}(\Lambda) \leq \frac{3}{2}$. 
  \item[(c)] For any $u \in \setR^n$ one has $\frac{\rho_{st}(\Lambda + u)}{\rho_{st}(\Lambda)} \geq \frac{1}{3}$.
  \end{enumerate}
\end{lemma}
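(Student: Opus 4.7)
Part (a) should be essentially immediate from the definition. The definition of $t$-stability involves two symmetric conditions: (I) on sublattices of $\Lambda$ and (II) on sublattices of $\Lambda^*$. Since $(\Lambda^*)^* = \Lambda$, condition (I) for $\Lambda^*$ is exactly condition (II) for $\Lambda$, and vice versa. So no work is needed here.

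For part (b), the plan is to reduce to the Reverse Minkowski Theorem (Theorem~\ref{thm:ReverseMinkowskiTheorem}) by a simple scaling. Consider the scaled lattice $t\Lambda$. Every sublattice of $t\Lambda$ has the form $t\tilde{\Lambda}$ for some sublattice $\tilde{\Lambda} \subseteq \Lambda$, and
\[
\det(t\tilde{\Lambda}) = t^{\rank(\tilde{\Lambda})} \det(\tilde{\Lambda}) \geq t^{\rank(\tilde{\Lambda})} \cdot t^{-\rank(\tilde{\Lambda})} = 1
\]
by condition (I). So $t\Lambda$ satisfies the hypothesis of the Reverse Minkowski Theorem, giving $\rho_{1/s}(t\Lambda) \leq \tfrac{3}{2}$ for $s = C\log(n)$. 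A direct substitution $y = x/t$ shows $\rho_{1/s}(t\Lambda) = \rho_{1/(st)}(\Lambda)$, which yields the claim.

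For part (c), the plan is to combine the Poisson-summation-based estimate in Lemma~\ref{lem:GeneralLatticeShiftApx} with part (b) applied to the dual. Applied with parameter $st$, Lemma~\ref{lem:GeneralLatticeShiftApx} gives
\[
|\rho_{st}(\Lambda + u) - (st)^n \det(\Lambda^*)| \leq (st)^n \det(\Lambda^*) \cdot \rho_{1/(st)}(\Lambda^* \setminus \{\bm{0}\}).
\]
By part (a), $\Lambda^*$ is $t$-stable, so part (b) applied to $\Lambda^*$ yields $\rho_{1/(st)}(\Lambda^*) \leq \tfrac{3}{2}$, and therefore $\rho_{1/(st)}(\Lambda^* \setminus \{\bm{0}\}) \leq \tfrac{1}{2}$. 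Substituting gives the two-sided estimate
\[
\tfrac{1}{2}(st)^n \det(\Lambda^*) \leq \rho_{st}(\Lambda + u) \leq \tfrac{3}{2}(st)^n \det(\Lambda^*),
\]
valid for every shift $u$ (including $u = \bm{0}$). Taking the ratio of the lower bound for $\Lambda + u$ and the upper bound for $\Lambda$ yields $\rho_{st}(\Lambda+u)/\rho_{st}(\Lambda) \geq 1/3$.

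None of the three parts should pose a real obstacle: (a) is a tautology, (b) is a one-line scaling reduction to Theorem~\ref{thm:ReverseMinkowskiTheorem}, and (c) is the standard ``smoothness implies uniform mass'' deduction from Poisson summation, with the smoothing hypothesis supplied by (b) on the dual. The only thing to be careful about is choosing the constant $C>0$ in $s = C\log(n)$ large enough that the conclusion of Theorem~\ref{thm:ReverseMinkowskiTheorem} applies, and noting that this same $C$ works for both $\Lambda$ and $\Lambda^*$ since both live in $\setR^n$.
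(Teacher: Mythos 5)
Your proposal is correct and follows essentially the same route as the paper: (a) by symmetry of the definition via $(\Lambda^*)^*=\Lambda$, (b) by scaling to $t\Lambda$ and invoking the Reverse Minkowski Theorem, and (c) by applying Lemma~\ref{lem:GeneralLatticeShiftApx} twice together with (a) and (b). The arithmetic in (c) is the same as the paper's, just phrased as two separate one-sided bounds rather than a single ratio expression.
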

\begin{proof}
  (a) is immediate from the definition of $t$-stability. Next, let $s = C\log(2n)$ be the parameter from
  Theorem~\ref{thm:ReverseMinkowskiTheorem}.
  For (b), 
  we can see that for any $\Lambda' \subseteq t\Lambda$ one has $\det(\Lambda') \geq 1$
  and so the Reverse Minkowski Theorem (Theorem~\ref{thm:ReverseMinkowskiTheorem}) applies to
  the lattice $t\Lambda$. Then  $\rho_{1/(st)}(\Lambda) = \rho_{1/s}(t\Lambda) \leq \frac{3}{2}$ which gives (b).
  For (c), applying Lemma~\ref{lem:GeneralLatticeShiftApx} twice gives
  \[
   \frac{\rho_{st}(\Lambda + u)}{\rho_{st}(\Lambda)} \geq \frac{(st)^n \det(\Lambda^*) \cdot (1-\rho_{1/(st)}(\Lambda^* \setminus \{\bm{0}\}))}{(st)^n \det(\Lambda^*) \cdot (1+\rho_{1/(st)}(\Lambda^* \setminus \{ \bm{0}\}))} \stackrel{(a)+(b)}{\geq} \frac{1-\frac{1}{2}}{1+\frac{1}{2}} = \frac{1}{3}. \qedhere
  \]
\end{proof}

\subsection{Convex geometry}\label{sec:LValueAndVolEstimates}

We review a few results from convex geometry that can all be found in the textbook
by Artstein-Avidan, Giannopoulos and Milman~\cite{AsymptoticGeometricAnalysis-Book2015}.
We denote $B_2^n := \{ x \in \setR^n \mid \|x\|_2 \leq 1\}$
and $S^{n-1} := \{ x \in \setR^n \mid \|x\|_2=1\}$ as the Euclidean ball and sphere, respectively.
The \emph{relative interior}
of $K$ is $\textrm{rel.int}(K) := \{ x \in K \mid \exists \varepsilon > 0: (x + \varepsilon \cdot B_2^n) \cap \textrm{affine.hull}(K) \subseteq K \}$.
We define the \emph{mean width} of a convex body $K$ as
\[
  w(K) := \E_{\theta \sim S^{n-1}}[\max\{ \left<\theta,x-y\right> : x,y \in K\}].
\]
For a compact convex  $K \subseteq \setR^n$ with $\bm{0} \in \textrm{rel.int}(K)$ we denote its \emph{polar} by
\[
  K^\circ := \{ y \in \textrm{span}(K) : \langle x, y \rangle \le 1 \ \forall x \in K\}.
\]
For illustration, suppose $K = \textrm{conv}\{ a_1,\dots,a_m\}$ is a polytope with $\bm{0}$ in its interior. Then the polar is described by inequalities with normal vectors $a_i$, i.e. $K^{\circ} = \{ x \in \setR^n \mid \left<a_i,x\right> \leq 1 \; \forall i\in [m]\}$.
We recall the following basic facts.
\begin{lemma}[Properties of polarity] \label{lem:PropertiesOfPolarity}
  For two convex bodies $K,P \subseteq \setR^n$ with $\bm{0} \in \textrm{int}(K)$ and $\bm{0} \in \textrm{int}(P)$ the following hold: 
  \begin{enumerate}
  \item[(a)]  $(K^{\circ})^{\circ} = K$.
 \item[(b)] For any subspace $F \subseteq \setR^n$ one has $\Pi_F(K)^{\circ} = K^{\circ} \cap F$.
 \item[(c)]  $(K \cap P)^{\circ} = \textrm{conv}(K^{\circ} \cup P^{\circ})$.
 \item[(d)]  $(-K)^{\circ} = -K^{\circ}$.
 \end{enumerate}
\end{lemma}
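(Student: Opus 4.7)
All four assertions are textbook properties, so the plan is to prove (a) by the separation theorem (this is the bipolar theorem), then derive (c) as a formal consequence of (a), and dispose of (b) and (d) by direct unpacking of the definition. Throughout, the hypothesis $\bm{0} \in \textrm{int}(K), \textrm{int}(Q)$ is used to guarantee that the polars are compact convex bodies containing the origin in their interior, so that (a) can be applied to $K^\circ$ and $Q^\circ$ as well.

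For (a), the inclusion $K \subseteq (K^\circ)^\circ$ is immediate: any $x \in K$ satisfies $\langle x, y \rangle \le 1$ for every $y \in K^\circ$ by definition of $K^\circ$. For the reverse direction, suppose $x_0 \notin K$. Since $K$ is closed and convex, the Hahn--Banach separation theorem yields $a \in \setR^n$ and $\alpha \in \setR$ with $\langle a, x \rangle \le \alpha < \langle a, x_0\rangle$ for all $x \in K$. Because $\bm{0} \in \textrm{int}(K)$, we may rescale so that $\alpha = 1$, whence $a \in K^\circ$ while $\langle a, x_0\rangle > 1$, so $x_0 \notin (K^\circ)^\circ$. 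For (d) one just computes: $y \in (-K)^\circ$ iff $\langle y, -x\rangle \le 1$ for all $x \in K$ iff $-y \in K^\circ$ iff $y \in -K^\circ$. For (b), recall that polarity of $\Pi_F(K)$ is taken inside $F$, i.e.\ against vectors $y \in F$. For any $y \in F$ and $x \in K$, the self-adjointness of the orthogonal projection gives $\langle y, \Pi_F(x)\rangle = \langle \Pi_F(y), x\rangle = \langle y, x\rangle$, so $y \in \Pi_F(K)^\circ$ iff $\langle y, x\rangle \le 1$ for all $x \in K$, i.e.\ iff $y \in K^\circ$; combined with $y \in F$, this is exactly $y \in K^\circ \cap F$.

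For (c), set $L := \textrm{conv}(K^\circ \cup Q^\circ)$. Since $L$ is the smallest convex set containing both $K^\circ$ and $Q^\circ$, its polar is the largest set whose inner products with all of $K^\circ \cup Q^\circ$ are at most $1$, which is $(K^\circ)^\circ \cap (Q^\circ)^\circ = K \cap Q$ by (a). Now applying (a) once more (to $L$, which is a convex body containing $\bm{0}$ in its interior because $K^\circ, Q^\circ$ do) gives $L = (L^\circ)^\circ = (K \cap Q)^\circ$, which is what was claimed.

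No step is a genuine obstacle; the only thing to be mildly careful about is that in (b) the polar of $\Pi_F(K)$ must be understood relative to the ambient subspace $F$ (otherwise the identity fails for dimension reasons), and that in (c) one must verify that $L$ itself is a legitimate convex body with $\bm{0}$ in its interior before invoking the bipolar theorem a second time.
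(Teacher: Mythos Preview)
The paper does not give a proof of this lemma; it introduces it with ``Recall the following basic facts'' and states the four items as standard properties of polarity, with no argument. Your proposal supplies a correct, self-contained proof of all four parts via the standard route (bipolar theorem from Hahn--Banach separation for (a), direct unpacking for (b) and (d), and deriving (c) from (a)), and you correctly flag the two subtleties that matter: that the polar in (b) is taken relative to $F$, and that in (c) the set $L=\mathrm{conv}(K^\circ\cup Q^\circ)$ is itself a convex body with $\bm{0}$ in its interior so the bipolar theorem applies to it.
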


We write $N(\bm{0},I_n)$ as the standard Gaussian distribution on $\setR^n$.  The \emph{$\ell$-value} of a symmetric convex $Q \subseteq \setR^n$ is defined as
\[
  \ell_Q = \E_{x \sim N(\bm{0},I_n)}[\|x\|_Q^2]^{1/2}.
\]
One may think of $\ell_Q$ as the ``average thinness'' of $Q$. It turns out that the $\ell$-value is also related to the mean width. To see this, note that $\| \cdot \|_{Q^{\circ}}$ is the \emph{dual norm}
to $\| \cdot \|_Q$, i.e. for all $x \in \setR^n$ one has
$
  \|x\|_{Q^{\circ}} = \max\{ \left<x,y\right> :  y \in Q \}
$.
Then
\begin{equation} \label{eq:LQPolarVsWidth}
  \ell_{Q^{\circ}} = \E_{x \sim N(\bm{0},I_n)}[\|x\|_{Q^{\circ}}^2]^{1/2} = \E_{\bm{x} \sim N(\bm{0},I_n)}\big[\max\{ \left<x,y\right>^2 : y \in Q\}\big]^{1/2}.
\end{equation}
We can see that the right hand side of \eqref{eq:LQPolarVsWidth} almost matches the definition of $w(Q)$. In fact, one can prove:
\begin{lemma} \label{lem:LValueVsMeanWidth}
  For any symmetric convex body $Q \subseteq \setR^n$ one has  $
     \ell_{Q^{\circ}} \asymp \sqrt{n} \cdot w(Q).
  $
\end{lemma}
For a positive semidefinite matrix $\Sigma$
we write $N(\bm{0},\Sigma)$ as the Gaussian with mean $\bm{0}$ and covariance matrix $\Sigma$
and for a subspace $U \subseteq \setR^n$ we write $I_U$ as the identity matrix on that subspace. 
Occasionally we will need to refer to the $\ell$-value of a compact symmetric convex set $Q$ that
is not necessarily full-dimensional. In that case we extend the definition to $ \ell_Q = \E_{x \sim N(\bm{0},I_{\textrm{span}(Q)})}[\|x\|_Q^2]^{1/2}$.
In other words, we think of $Q$ as the $\dim(\textrm{span}(Q))$-dimensional convex body whose
ambient space is $\textrm{span}(Q)$. We also note that $\| \cdot \|_Q$ is a norm in $\textrm{span}(Q)$
and not defined outside of $\textrm{span}(Q)$.

We say that a symmetric convex body $Q$ is in \emph{$\ell$-position} if $\ell_Q \cdot \ell_{Q^{\circ}} \leq O(n \log(2n))$.
A powerful tool in convex geometry is that every symmetric convex body can indeed be brought into $\ell$-position: 
\begin{theorem}[Figiel and Tomczak-Jaegermann~\cite{FigielTomczakJaegermann1979}, Lewis~\cite{LewisEllipsoids1979}, Pisier~\cite{PisierKConvexity1982}] \label{thm:PisierRescaling}
For any symmetric convex body $Q \subseteq \setR^n$, there is an invertible linear map $T : \setR^n \to \setR^n$ so that $\ell_{T(Q)} \cdot \ell_{(T(Q))^{\circ}} \leq O(n\log(2n))$. 
\end{theorem}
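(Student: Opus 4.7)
The plan is to derive this statement from the classical $MM^{*}$-estimate of Pisier. By the relation $\ell_Q = \Theta(\sqrt{n}) \cdot w(Q^{\circ})$ recorded in the preliminaries, we have $\ell_{T(Q)} \cdot \ell_{(T(Q))^{\circ}} = \Theta(n) \cdot w(T(Q)) \cdot w((T(Q))^{\circ})$, so it suffices to exhibit an invertible $T$ with $w(T(Q)) \cdot w((T(Q))^{\circ}) \lesssim \log n$. This is a reduction to the mean-width version of the statement, for which standard techniques are available.

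First I would place $Q$ in the so-called $\ell$-position (or Lewis position): choose $T$ minimizing $\ell_{T(Q)}$ subject to $\det T = 1$. A compactness argument guarantees a minimizer, and a Lagrange-multiplier computation applied to the operator $u = \mathrm{id}: \ell_2^n \to X$, where $X = (\setR^n, \|\cdot\|_{T(Q)})$, yields the trace-duality identity $\ell(u) = \ell(u^{*})$, i.e. $\ell_{T(Q)} = \ell_{(T(Q))^{\circ}}$. Thus in this position it is enough to prove $\ell_{T(Q)}^2 \lesssim n \log n$.

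Next I would invoke Pisier's $K$-convexity theorem: for every $n$-dimensional normed space $X$, the $K$-convexity constant $K(X)$ (the norm of the Rademacher/Gaussian projection in $L_2(X)$) satisfies $K(X) \lesssim \log(n+1)$. The key computation in the $\ell$-position is a trace-duality step that writes the product $\ell(u) \cdot \ell(u^{*})$ as a trace of the composition $u^{*} u$, and estimates it against $n \cdot K(X)$: one expands $\ell(u)$ as a Gaussian average $\E \|u(g)\|_X^2$, uses the Gaussian projection to transfer Gaussian averages between $X$ and $X^{*}$, and pays exactly one factor of $K(X)$ in doing so. Combining with $\ell_{T(Q)} = \ell_{(T(Q))^{\circ}}$ and Pisier's estimate yields $\ell_{T(Q)} \cdot \ell_{(T(Q))^{\circ}} \lesssim n \log n$, giving the theorem.

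The main obstacle is Pisier's $K$-convexity theorem itself, which is a deep piece of Banach-space theory (proved via hypercontractivity of the Ornstein--Uhlenbeck semigroup and Beckner's inequality). Conditional on this black box, the $\ell$-position setup and the trace-duality manipulation are clean and standard; I would refer to the exposition in Artstein-Avidan--Giannopoulos--Milman cited in the preliminaries for the detailed computations.
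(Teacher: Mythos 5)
The paper cites this theorem without proof, referring the reader to the textbook of Artstein-Avidan, Giannopoulos and Milman; there is no in-house argument to compare against. Your overall strategy — place $Q$ in $\ell$-position, then invoke Pisier's $K$-convexity theorem with $K(X) \lesssim \log n$ — is indeed the standard proof of the $MM^{*}$-estimate, and citing that textbook is exactly right.

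However, your intermediate claim that the $\ell$-position yields the identity $\ell_{T(Q)} = \ell_{(T(Q))^{\circ}}$ is false, and the logic of your proposal breaks at that point. The Lagrange-multiplier argument at the minimizer of $\ell_{T(Q)}$ subject to $\det T = 1$ does \emph{not} give that the $\ell$-value of $Q$ equals the $\ell$-value of its polar; what it gives is Lewis's trace-duality identity $\ell(u) \cdot \ell^{*}(u^{-1}) = n$, where $\ell^{*}$ is the \emph{trace-dual} norm $\ell^{*}(v) := \sup\{\,|\mathrm{tr}(vw)| : \ell(w) \le 1\,\}$, which in general is not the $\ell$-norm of any operator. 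The entire role of Pisier's $K$-convexity theorem is to bridge this gap: it lets you upgrade $\ell^{*}(u^{-1})$ to $\ell\bigl((u^{-1})^{*}\bigr) = \ell_{(T(Q))^{\circ}}$ at the cost of a factor $K(X) \lesssim \log n$. If the equality you claim were true, no $K(X)$ factor would be needed and your own later paragraph about "transferring Gaussian averages between $X$ and $X^{*}$ and paying one factor of $K(X)$" would have nothing to do. Likewise, the reduction "it suffices to prove $\ell_{T(Q)}^{2} \lesssim n\log n$" rests on the erroneous identity and does not survive the correction — in $\ell$-position there is no such two-sided control on $\ell_{T(Q)}$ alone. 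Replacing your claimed identity with Lewis's $\ell(u)\,\ell^{*}(u^{-1}) = n$ and letting Pisier's theorem convert $\ell^{*}$ into $\ell$ of the dual turns your sketch into the correct standard argument.
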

By Lemma~\ref{lem:LValueVsMeanWidth}, the conclusion of Theorem~\ref{thm:PisierRescaling} is equivalent to
$w(T(Q)) \cdot w(T(Q)^{\circ}) \leq O(\log(2n))$. Moreover one can prove that for any symmetric convex body $Q$ one has $w(Q) \cdot w(Q^{\circ}) \gtrsim w(B_2^n)^2 \gtrsim 1$. Then one can interpret Theorem~\ref{thm:PisierRescaling} as the fact that every symmetric convex body can be linearly transformed so that in terms of mean width and
average thinness it is within a $O(\log(2n))$-factor of the Euclidean ball. For the sake of comparison,
we note that the bound that could be obtained via the more classical John's Theorem~\cite{John1948} would be of the order of $\sqrt{n}$. 

We state two estimates concerning monotonicity of the $\ell$-value that will be crucial for our later arguments:
\begin{lemma}[Monotonicity of the $\ell$-value I] \label{lem:MonotonicityLValue}
  Let $Q \subseteq \setR^n$ be a symmetric convex body. Then for any subspace $U \subseteq \setR^n$, one has $\ell_{Q \cap U} \leq \ell_Q$.
\end{lemma}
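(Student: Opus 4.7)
The plan is to write everything in terms of Gaussian expectations and exploit the symmetry of both the Gaussian and the body $Q$. Without loss of generality, I would assume that $\mathrm{span}(Q) = \setR^n$ and that $U \subseteq \setR^n$ is a $d$-dimensional subspace (otherwise replace $U$ by $U \cap \mathrm{span}(Q)$, which does not change $Q \cap U$). The key observation is that for any $x \in U$ and any $r > 0$, $rQ \cap U = r(Q \cap U)$ since $U$ is a linear subspace, so $\|x\|_{Q \cap U} = \|x\|_Q$ for every $x \in U$. Consequently
\[
 \ell_{Q \cap U}^2 = \E_{x \sim N(\bm{0}, I_U)}\bigl[\|x\|_Q^2\bigr].
\]

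Next, I would decompose a standard Gaussian on $\setR^n$ as $x = x_U + x_{U^{\perp}}$ with $x_U \sim N(\bm{0}, I_U)$ and $x_{U^\perp} \sim N(\bm{0}, I_{U^{\perp}})$ independent, so that
\[
 \ell_Q^2 = \E\bigl[\|x_U + x_{U^{\perp}}\|_Q^2\bigr].
\]
The pointwise estimate I would use is
\[
 \|x_U\|_Q = \Bigl\|\tfrac{1}{2}(x_U + x_{U^{\perp}}) + \tfrac{1}{2}(x_U - x_{U^{\perp}})\Bigr\|_Q \leq \tfrac{1}{2}\|x_U + x_{U^{\perp}}\|_Q + \tfrac{1}{2}\|x_U - x_{U^{\perp}}\|_Q,
\]
which uses the triangle inequality together with $\|-y\|_Q = \|y\|_Q$ (symmetry of $Q$). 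Squaring and applying convexity of $t \mapsto t^2$ yields
\[
 \|x_U\|_Q^2 \leq \tfrac{1}{2}\|x_U + x_{U^{\perp}}\|_Q^2 + \tfrac{1}{2}\|x_U - x_{U^{\perp}}\|_Q^2.
\]

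Finally, taking expectations and using that $x_{U^{\perp}}$ and $-x_{U^{\perp}}$ have the same distribution (rotational symmetry of the Gaussian), both terms on the right have expectation $\ell_Q^2$, so $\ell_{Q \cap U}^2 \leq \ell_Q^2$. There is really no obstacle here; the only thing to verify carefully is the identity $rQ \cap U = r(Q \cap U)$ that justifies identifying $\|\cdot\|_{Q \cap U}$ with the restriction of $\|\cdot\|_Q$ to $U$, after which the proof is just Gaussian symmetrization plus the parallelogram-style convexity inequality.
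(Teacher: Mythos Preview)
Your proof is correct and follows essentially the same route as the paper: both decompose a standard Gaussian as $x_U + x_{U^\perp}$ and use convexity of $y \mapsto \|x_U + y\|_Q^2$ together with $\E[x_{U^\perp}] = \bm{0}$. The only cosmetic difference is that the paper applies Jensen's inequality in one line, whereas you carry out the special case of Jensen for the two-point symmetric distribution $\{x_{U^\perp}, -x_{U^\perp}\}$ by hand via the triangle inequality and squaring.
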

\begin{proof}
  Indeed, one has
\[
  \ell^2_Q = \mathbb{E}_{z \sim N(\bm{0}, I_U)}\big[ \mathbb{E}_{y \sim N(\bm{0}, I_{U^\perp})} [\|z + y\|_Q^2]\big] \geq \mathbb{E}_{z \sim N(\bm{0}, I_U)} \big[\big\| z + \underbrace{\mathbb{E}_{y \sim N(\bm{0}, I_{U^\perp})} [y]}_{=\bm{0}}\big\|_Q^2\big] = \ell^2_{Q \cap U},
\]
where the inequality follows from Jensen's inequality and the convexity of $y \mapsto \|z + y\|_Q^2$.
\end{proof}

\begin{lemma}[Monotonicity of the $\ell$-value II] \label{lem:LvalueOfProjectedIntersection} 
  Let $Q \subseteq \setR^n$ be a symmetric convex body. For any subspaces $V \subset W \subseteq \setR^n$,  
  one has $\ell_{\Pi_{V^{\perp}}(Q \cap W)} \le \ell_Q$.
\end{lemma}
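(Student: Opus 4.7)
The plan is to prove the lemma by composing two reductions. First I would apply Lemma~\ref{lem:MonotonicityLValue} with the subspace $W$ to obtain $\ell_{Q \cap W} \leq \ell_Q$. Setting $K := Q \cap W$ (a symmetric convex body whose linear span equals $W$, since $\bm{0} \in \inte(Q)$ gives $\bm{0} \in \textrm{rel.int}(K)$), it then suffices to establish the intrinsic inequality $\ell_{\Pi_{V^{\perp}}(K)} \leq \ell_K$.

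For the intrinsic step I would set up the correct ambient Gaussian. Because $V \subset W$, the subspace $W$ decomposes orthogonally as $W = V \oplus (W \cap V^{\perp})$, so $\Pi_{V^{\perp}}(W) = W \cap V^{\perp} =: U$, which is the span of $\Pi_{V^{\perp}}(K)$. Consequently, if $w \sim N(\bm{0}, I_W)$ and we write $w = \Pi_V(w) + \Pi_{V^{\perp}}(w)$, the two summands are independent and $\Pi_{V^{\perp}}(w) \sim N(\bm{0}, I_U)$. This lets me rewrite the definition
\[
\ell_{\Pi_{V^{\perp}}(K)}^{2} = \E_{x \sim N(\bm{0}, I_U)}\big[\|x\|_{\Pi_{V^{\perp}}(K)}^{2}\big] = \E_{w \sim N(\bm{0}, I_W)}\big[\|\Pi_{V^{\perp}}(w)\|_{\Pi_{V^{\perp}}(K)}^{2}\big].
\]

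The third step is a pointwise Minkowski-functional bound: for any $w \in W$, the vector $w$ is itself an admissible preimage of $\Pi_{V^{\perp}}(w)$ in $W$, so
\[
\|\Pi_{V^{\perp}}(w)\|_{\Pi_{V^{\perp}}(K)} = \inf\{\|z\|_K : z \in W,\ \Pi_{V^{\perp}}(z) = \Pi_{V^{\perp}}(w)\} \leq \|w\|_K.
\]
Squaring and taking expectation over $w \sim N(\bm{0}, I_W)$ gives $\ell_{\Pi_{V^{\perp}}(K)}^{2} \leq \ell_K^{2}$, and combined with the first step this yields the lemma.

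The main thing to be careful about is the identification of the Gaussian on $\textrm{span}(\Pi_{V^{\perp}}(K))$ with the pushforward of $N(\bm{0}, I_W)$ under $\Pi_{V^{\perp}}$; this is exactly where the hypothesis $V \subset W$ enters, since it ensures the pushforward has identity covariance on $U = W \cap V^{\perp}$. If one only had $V \subseteq \setR^n$ without $V \subset W$, the pushforward would have a degenerate or non-identity covariance on $\Pi_{V^{\perp}}(W)$ and the clean deterministic domination above would have to be replaced by a Jensen-style argument as in Lemma~\ref{lem:MonotonicityLValue}.
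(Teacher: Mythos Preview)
Your proof is correct. You first reduce to $K = Q \cap W$ via Lemma~\ref{lem:MonotonicityLValue}, and then prove the intrinsic projection inequality $\ell_{\Pi_{V^{\perp}}(K)} \le \ell_K$ by recognizing $\|\cdot\|_{\Pi_{V^{\perp}}(K)}$ as the quotient norm, so that $\|\Pi_{V^{\perp}}(w)\|_{\Pi_{V^{\perp}}(K)} \le \|w\|_K$ pointwise for $w \in W$, and integrating against $N(\bm{0},I_W)$ using that $\Pi_{V^{\perp}}$ pushes $N(\bm{0},I_W)$ forward to $N(\bm{0},I_{W \cap V^{\perp}})$.

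The paper takes a slightly different and shorter route: it observes the containment $\Pi_{V^{\perp}}(Q \cap W) \supseteq Q \cap W \cap V^{\perp}$ (any point of $Q \cap W$ already lying in $V^{\perp}$ is its own projection), which immediately gives $\ell_{\Pi_{V^{\perp}}(Q \cap W)} \le \ell_{Q \cap W \cap V^{\perp}}$, and then applies Lemma~\ref{lem:MonotonicityLValue} once with the subspace $W \cap V^{\perp}$. In effect, the paper compares the projection to the \emph{section} $K \cap V^{\perp}$ rather than to $K$ itself, which avoids both the quotient-norm identity and the pushforward computation. Your approach, on the other hand, establishes the cleaner stand-alone statement that orthogonal projection never increases the $\ell$-value of a symmetric body, which is a useful fact in its own right.
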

\begin{proof}
  We have  $\ell_{\Pi_{V^{\perp}}(Q \cap W)} \le \ell_{Q \cap W \cap V^{\perp}} \le \ell_Q$
  using that $\Pi_{V^{\perp}}(Q \cap W) \supseteq Q \cap W \cap V^{\perp}$ and using Lemma~\ref{lem:MonotonicityLValue}. %
\end{proof}

The following classical result says that among all bodies with identical volume, the Euclidean ball minimizes the mean width. 
\begin{theorem}[Urysohn Inequality I] \label{thm:UrysohnInequality}
  For any convex body $K \subseteq \setR^n$ one has
  \[
    w(K) \geq 2 \cdot \Big(\frac{\Vol_n(K)}{\Vol_n(B_2^n)}\Big)^{1/n}.
  \]
\end{theorem}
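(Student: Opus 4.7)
Set $r := (\Vol_n(K)/\nu_n)^{1/n}$ so that $\Vol_n(rB_2^n) = \Vol_n(K)$; the inequality to be shown then reads $w(K) \geq w(rB_2^n) = 2r$. The plan is the classical Steiner-symmetrization reduction to the Euclidean ball.

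For any hyperplane $H$ through the origin, let $S_H(K)$ denote the Steiner symmetrization of $K$ about $H$, which on every line $\ell$ perpendicular to $H$ replaces $\ell \cap K$ by a segment of equal length centered on $H \cap \ell$. I would verify three properties: (a) $S_H(K)$ is a convex body with $\Vol_n(S_H(K)) = \Vol_n(K)$; (b) $w(S_H(K)) \leq w(K)$; and (c) for a suitable sequence of hyperplanes $H_1, H_2, \ldots$ through the origin, the iterates $S_{H_j} \circ \cdots \circ S_{H_1}(K)$ converge in the Hausdorff metric to $rB_2^n$. Given these, the monotonicity from (b) together with continuity of $w$ in the Hausdorff metric yields $w(K) \geq w(rB_2^n) = 2r$.

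Properties (a) and (c) are standard (Cavalieri together with Brunn's concavity principle for (a); a classical convergence theorem on iterated Steiner symmetrizations for (c)), so the main obstacle is (b). I would handle it via Kubota's identity $w(K) = \tfrac{2}{\nu_n} V(K, B_2^n, \ldots, B_2^n)$ expressing the mean width as a first mixed volume, combined with the inclusion $S_H(A) + S_H(B) \subseteq S_H(A+B)$, itself a slice-by-slice consequence of Brunn--Minkowski. Taking $A = K$ and $B = tB_2^n$ and using $S_H(B_2^n) = B_2^n$ gives $\Vol_n(S_H(K) + tB_2^n) \leq \Vol_n(K + tB_2^n)$ for all $t \geq 0$; matching the coefficient of $t$ in the Steiner--Minkowski expansion of both sides forces $V(S_H(K), B_2^n, \ldots, B_2^n) \leq V(K, B_2^n, \ldots, B_2^n)$, which is exactly (b).

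If one prefers to avoid the limiting argument entirely, the same two tools give a one-line proof: Minkowski's first mixed-volume inequality $V(K, B_2^n, \ldots, B_2^n)^n \geq \Vol_n(K) \cdot \nu_n^{n-1}$, combined with Kubota's identity, yields $\bigl(\tfrac{1}{2}\nu_n w(K)\bigr)^n \geq \Vol_n(K) \cdot \nu_n^{n-1}$, which rearranges in one step to the stated bound. I would likely present this cleaner route as the actual proof and use Steiner symmetrization only as motivation.
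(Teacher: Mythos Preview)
Your proof is correct; both the Steiner-symmetrization route and the shorter mixed-volume route are valid standard arguments for Urysohn's inequality. The paper, however, does not prove this statement at all: it is listed among the classical preliminaries taken from the textbook~\cite{AsymptoticGeometricAnalysis-Book2015}, so there is no ``paper's own proof'' to compare against. Your one-line derivation via Kubota's identity $w(K) = \tfrac{2}{\nu_n} V(K,B_2^n,\ldots,B_2^n)$ together with Minkowski's first inequality is exactly the clean proof one finds in that reference, and would be the right choice if a proof were required here.
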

A slight variant of this inequality will be handy for us:
\begin{corollary}[Urysohn Inequality II] \label{cor:UrysohnInequalityII}
For any symmetric convex body $Q \subseteq \setR^n$ one has $\Vol_n(Q)^{1/n} \lesssim \frac{\ell_{Q^{\circ}}}{n}$.
\end{corollary}
\begin{proof}
  Applying Urysohn's Inequality I we obtain
  \[
 \Vol_n(Q)^{1/n} \stackrel{\textrm{Thm~\ref{thm:UrysohnInequality}}}{\lesssim} w(Q) \cdot \underbrace{\Vol_n(B_2^n)^{1/n}}_{\lesssim 1/\sqrt{n}} \stackrel{\textrm{Lem~\ref{lem:LValueVsMeanWidth}}}{\lesssim} \frac{\ell_{Q^{\circ}}}{n}.
\]
Here we use in particular that $\Vol_n(B_2^n) \leq (\frac{2e}{\sqrt{n}})^n$.
\end{proof}
For a symmetric convex body $Q \subseteq \setR^n$, one can prove that the \emph{Mahler product}
$\textrm{Vol}_n(Q) \cdot \textrm{Vol}_n(Q^{\circ})$ is always in a surprisingly small range,
see e.g. \cite[Chapter 8]{AsymptoticGeometricAnalysis-Book2015}:
\begin{theorem}[Blaschke-Santal\'o-Bourgain-Milman] \label{thm:BSBM}
  For any symmetric convex body $Q \subseteq \setR^n$ one has
  \[
    C_1^n \leq  \frac{\Vol_n(Q) \cdot \Vol_n(Q^{\circ})}{\Vol_n(B_2^n)^2} \leq C_2^n,
  \]
  where $C_1,C_2>0$ are constants.
\end{theorem}

Let $b(K) := \frac{1}{\Vol_n(K)}\int_K x \; dx$ denote the \emph{barycenter} or \emph{centroid} of a convex body $K$. 
Given a convex body $K$, we abbreviate its \emph{inner symmetrizer} $K_{\textrm{sym}} := K \cap -K$. We will run into the issue that we need to control the volume of a non-symmetric convex body $K$,
but Theorem~\ref{thm:PisierRescaling} only holds for symmetric ones. A popular strategy in
convex geometry is to translate $K$ so that $b(K) = \bm{0}$ and then consider $K_{\textrm{sym}}$ which is a symmetric convex body contained in $K$ capturing much of its geometry. The following is a classical result in convex geometry: 
\begin{theorem}[Milman, Pajor~\cite{MilmanPajor2000}] \label{thm:MilmanPajorInequality} 
  For any convex body $K \subseteq \setR^n$ with the barycenter at the origin one has $\Vol_n(K_{\textrm{sym}}) \geq 2^{-n}\Vol_n(K)$.
\end{theorem}

We will actually require a Milman-Pajor type inequality for the \emph{projection} of a convex body $K$
and its symmetrizer into some subspace $F$. Unfortunately, the symmetrizer of the projection is not the same
as the projection of the symmetrizer! Luckily, a good enough approximate version still holds 
which was proven only recently by Vritsiou~\cite{vritsiou2023regular} in the context of showing the existence of regular $M$-ellipsoids for non-symmetric convex bodies.  
\begin{proposition}[Volume of projections I~{\cite[Corollary 11]{vritsiou2023regular}}] \label{prop:VolumeOfProjectionVsSymmetrizer}
Let $K \subseteq \setR^n$ be a convex body so that $b(K) = \bm{0}$ and let $F \subseteq \setR^n$ be a $d$-dimensional subspace. 
Then 
\[
 \Vol_d(\Pi_F(K))^{1/d} \lesssim \Big(\frac{n}{d}\Big)^5 \cdot \log\Big(\frac{en}{d}\Big)^2  \cdot \Vol_d(\Pi_F(K_{\textrm{sym}}))^{1/d}
\]
  Moreover, this remains true if instead of $b(K) = \bm{0}$ one has $\|b(K)\|_{K-K} \leq \frac{1}{2(n+1)}$.
\end{proposition}
To be precise, the ``moreover'' part is not in \cite{vritsiou2023regular} but can be easily added; we give
a proof of that fact in Section~\ref{sec:FindingSubspace}.
In an earlier version of this manuscript, we had shown an inequality with a better factor of $O((\frac{n}{d})^3)$ when the body is
centered so that $b(K^{\circ}) = \bm{0}$, i.e. the origin is the \emph{Santal\'o point} of $K$.
This only affects the implicit constant in Theorem~\ref{thm:KLConj} and it appears more
natural to work with the barycenter; hence we choose to work with Vritsiou's estimate.
For the interested reader, the bound with the Santal\'o point as center can be found in \cite{reis2024subspaceflatnessconjecturefasterV2} and also independently in~\cite{vritsiou2023regular}.

We also prove a custom-tailored inequality for later. For the remainder of this manuscript, $K_{\textrm{sym}}^{\circ}$ always refers to the polar of the symmetrizer, i.e. $(K_{\textrm{sym}})^{\circ}$.
\begin{lemma}[Volume of projections II] \label{lem:VolOfProjVsLValueOfSymmetrizer}
  Let $K \subseteq \setR^n$ be a convex body with $b(K) = \bm{0}$ and let $F \subseteq \setR^n$ be a $d$-dimensional subspace. Then
  \[(\Vol_d(\Pi_F(K)))^{1/d} \lesssim \Big(\frac{n}{d}\Big)^{6} \cdot \frac{\ell_{K_{\textrm{sym}}^{\circ}}}{d}.\]
  Moreover, this remains true if instead of $b(K) = \bm{0}$ one has $\|b(K)\|_{K-K} \leq \frac{1}{2(n+1)}$.
\end{lemma}
\begin{proof}
  Using the volume estimate from Proposition~\ref{prop:VolumeOfProjectionVsSymmetrizer} with the assumption that the barycenter of $K$ lies at the origin (or as a weaker assumption at least $\|b(K)\|_{K-K} \leq \frac{1}{2(n+1)}$), we obtain
  \begin{eqnarray*}
    (\Vol_d(\Pi_F(K)))^{1/d} &\stackrel{\textrm{Prop~\ref{prop:VolumeOfProjectionVsSymmetrizer}}}{\lesssim}& \Big(\frac{n}{d}\Big)^6 \cdot (\Vol_d(\Pi_F(K_{\textrm{sym}})))^{1/d} \\
    &\stackrel{\textrm{Cor~\ref{cor:UrysohnInequalityII}}}{\lesssim}& \Big(\frac{n}{d}\Big)^6 \cdot \frac{\ell_{(\Pi_F(K_{\textrm{sym}}))^{\circ}}}{d} \\
    &=& \Big(\frac{n}{d}\Big)^{6} \cdot \frac{\ell_{K_{\textrm{sym}}^{\circ} \cap F}}{d} \\ &\stackrel{\textrm{Lem~\ref{lem:MonotonicityLValue}}}{\leq}& \Big(\frac{n}{d}\Big)^{6} \cdot \frac{\ell_{K_{\textrm{sym}}^{\circ}}}{d}.
  \end{eqnarray*}
  Here we also used the fact that $(\Pi_F(K_{\textrm{sym}}))^{\circ} = K_{\textrm{sym}}^{\circ} \cap F$ by Lemma~\ref{lem:PropertiesOfPolarity}. 
\end{proof}

\subsection{Properties of the covering radius}

While the set $K$ may not be symmetric, the sets $\Lambda$ and $\setR^n$ are symmetric, which implies
the following:
\begin{lemma}[Properties of the covering radius~{\cite[Page 31]{DadushThesis2012}}] \label{lem:PropertiesOfCoveringRadius}
  Consider a lattice $\Lambda \subseteq \setR^n$ and a compact convex set $K \subseteq \setR^n$ with $\textrm{span}(\Lambda) = \textrm{affine.hull}(K)$. Then
  \begin{enumerate}
  \item[(a)] $\mu(\Lambda,K) = \mu(\Lambda,K+u)$ for all $u \in \mathrm{span}(\Lambda)$.
  \item[(b)] $\mu(\Lambda,K) = \min\{ r \geq 0 \mid (x + rK) \cap \Lambda \neq \emptyset \; \forall x \in \mathrm{span}(\Lambda)\}$.
  \end{enumerate}
\end{lemma}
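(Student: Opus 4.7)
The plan is to exploit the symmetry of $\Lambda$ (i.e.\ $\Lambda = -\Lambda$) together with the fact that $\mathrm{span}(\Lambda)$ is a linear subspace invariant under translations by any of its elements.

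For part (a), I would argue both inequalities directly. Suppose $r \geq \mu(\Lambda,K)$, so $\Lambda + rK = \mathrm{span}(\Lambda)$. Pick any $x \in \mathrm{span}(\Lambda)$ and any $u \in \mathrm{span}(\Lambda)$. Since $x - ru \in \mathrm{span}(\Lambda)$, there exist $\lambda \in \Lambda$ and $y \in K$ with $x - ru = \lambda + ry$, and hence $x = \lambda + r(y+u) \in \Lambda + r(K+u)$. This shows $\mu(\Lambda,K+u) \leq \mu(\Lambda,K)$, and replacing $u$ by $-u$ (with $K+u$ playing the role of $K$) gives the reverse inequality. The only subtle point here is to make sure the minimum in the definition of $\mu$ is attained, which follows from compactness of $K$ together with standard closedness arguments for the set of admissible $r$.

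For part (b), I would first unpack the RHS: $(x + rK) \cap \Lambda \neq \emptyset$ is equivalent to the existence of $\lambda \in \Lambda$ and $y \in K$ with $\lambda = x + ry$, i.e.\ $x = \lambda - ry \in \Lambda + r(-K)$. Therefore
\[
 \min\{ r \geq 0 \mid (x + rK) \cap \Lambda \neq \emptyset \;\forall\, x \in \mathrm{span}(\Lambda)\} \;=\; \mu(\Lambda, -K).
\]
It then suffices to show $\mu(\Lambda,K) = \mu(\Lambda,-K)$. This is where the symmetry of $\Lambda$ enters: the identity $\Lambda + rK = \mathrm{span}(\Lambda)$ is preserved under negation, since $-(\Lambda + rK) = (-\Lambda) + r(-K) = \Lambda + r(-K)$ and $-\mathrm{span}(\Lambda) = \mathrm{span}(\Lambda)$.

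Neither step involves a real obstacle — the whole lemma is essentially a bookkeeping exercise that makes precise the informal comment preceding it. The one thing to be careful about is the existence of the minimum (rather than merely an infimum) defining $\mu$, which one would justify by noting that the set $\{r \geq 0 : \Lambda + rK = \mathrm{span}(\Lambda)\}$ is closed and bounded below, using compactness of $K$ and the assumption $\mathrm{span}(\Lambda) = \mathrm{affine.hull}(K)$ to guarantee that a finite such $r$ exists at all.
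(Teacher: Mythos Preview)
The paper does not actually give a proof of this lemma; it only states the one-line hint ``the sets $\Lambda$ and $\setR^n$ are symmetric, which implies the following'' and then moves on. Your argument is exactly the intended unpacking of that hint: for (a) you use translation invariance of $\mathrm{span}(\Lambda)$, and for (b) you reduce to $\mu(\Lambda,K)=\mu(\Lambda,-K)$ via $-\Lambda=\Lambda$ and $-\mathrm{span}(\Lambda)=\mathrm{span}(\Lambda)$. This is correct and matches the paper's implicit approach.

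One small wording issue: in part (a) you write ``Suppose $r \geq \mu(\Lambda,K)$, so $\Lambda + rK = \mathrm{span}(\Lambda)$'', which presupposes that the set of admissible $r$'s is upward closed. That is not needed and not obviously true when $\bm{0}\notin K$. Your actual computation, however, shows the stronger and cleaner statement that $\Lambda + rK = \mathrm{span}(\Lambda)$ if and only if $\Lambda + r(K+u) = \mathrm{span}(\Lambda)$, so the two sets of admissible radii coincide and hence so do their minima. Phrasing it that way avoids the monotonicity question entirely.
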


We need a triangle inequality for the covering radius: 
\begin{lemma}[Triangle inequality for covering radius] \label{lem:TriangleIneqGenCoveringRadius}
  Let $\Lambda \subseteq \setR^n$ be a lattice and let $\Lambda' \subseteq \Lambda$ be a primitive sublattice.
  Then for any compact convex set $K \subseteq \setR^n$ with $\bm{0} \in \textrm{rel.int}(K)$ and $\textrm{span}(\Lambda)=\textrm{span}(K)$ one has
  \[
    \mu(\Lambda,K) \leq  \mu(\Lambda',K \cap W) + \mu(\Lambda / \Lambda',\Pi_{W^{\perp}}(K)), 
  \]
  where $W := \textrm{span}(\Lambda')$. 
\end{lemma}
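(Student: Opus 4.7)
The plan is to use Lemma~\ref{lem:PropertiesOfCoveringRadius}(b), which reduces the claim to showing that for every $x \in \textrm{span}(\Lambda)$, the shifted body $x + (r_1 + r_2)K$ contains a lattice point, where I set $r_1 := \mu(\Lambda',K \cap W)$ and $r_2 := \mu(\Lambda/\Lambda',\Pi_{W^\perp}(K))$. The idea is a two-step cover: first use the quotient lattice to take care of the $W^\perp$-component of $x$, and then use $\Lambda'$ itself to correct the residual component living in $W$. The primitivity of $\Lambda'$ enters here because it ensures $\Pi_{W^\perp}(\Lambda) = \Lambda/\Lambda'$ is a discrete lattice, so that $\mu(\Lambda/\Lambda',\Pi_{W^\perp}(K))$ is well-defined; the assumption $\bm{0} \in \textrm{rel.int}(K)$ ensures $K \cap W$ and $\Pi_{W^\perp}(K)$ are again convex bodies with the right affine hulls.

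Given $x \in \textrm{span}(\Lambda)$, I first apply the definition of $r_2$ to the point $\Pi_{W^\perp}(x)$: there is $\bar v \in \Lambda/\Lambda'$ and $k \in K$ with
\[
\bar v = \Pi_{W^\perp}(x) + r_2 \Pi_{W^\perp}(k).
\]
Lift $\bar v$ to some $v \in \Lambda$ with $\Pi_{W^\perp}(v) = \bar v$. Then the vector $u := v - x - r_2 k$ satisfies $\Pi_{W^\perp}(u) = \bm{0}$, i.e.\ $u \in W$. Next, I apply the definition of $r_1$ inside $W$ to the point $-u \in W = \textrm{span}(\Lambda')$: there is $w \in \Lambda'$ and $k' \in K \cap W$ with $w = -u + r_1 k'$.

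Adding these two pieces gives $v + w \in \Lambda$ (since $w \in \Lambda' \subseteq \Lambda$) and
\[
v + w = (x + r_2 k + u) + (-u + r_1 k') = x + r_1 k' + r_2 k.
\]
By convexity of $K$, the point $\tfrac{r_1 k' + r_2 k}{r_1 + r_2}$ is a convex combination of $k,k' \in K$, hence lies in $K$, so $r_1 k' + r_2 k \in (r_1 + r_2) K$. Therefore $v + w \in (x + (r_1+r_2)K) \cap \Lambda$, and Lemma~\ref{lem:PropertiesOfCoveringRadius}(b) yields $\mu(\Lambda,K) \leq r_1 + r_2$, as claimed.

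There is essentially no hard step; the only subtleties are bookkeeping ones. One must check that $K \cap W$ really is a body with nonempty relative interior in $W$ (so $r_1$ is finite) and that $\Pi_{W^\perp}(K)$ is a body in $\textrm{span}(\Lambda/\Lambda') = \Pi_{W^\perp}(\textrm{span}(\Lambda))$ (so $r_2$ is finite), both of which follow from $\bm{0} \in \textrm{rel.int}(K)$ together with $\textrm{span}(\Lambda) = \textrm{affine.hull}(K)$. The lift of $\bar v$ to $v \in \Lambda$ uses precisely primitivity of $\Lambda'$. The rest is the convex-combination trick that scales the two cover errors additively.
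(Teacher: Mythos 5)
Your proof is correct and takes essentially the same approach as the paper's: project $x$ to $W^\perp$ and use the quotient lattice to correct that component, then pull back into $W$ and use the primitive sublattice to fix the residual, finally combining the two error vectors in $(r_1+r_2)K$ by convexity. The only differences from the paper's argument are notational (you name the lifted lattice point $v$ rather than $y$, and write the error vectors as $r_1k'$, $r_2k$ rather than $u_1$, $u_2$), plus slightly more explicit bookkeeping about why the lift exists and why the sectioned/projected bodies are nondegenerate.
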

\begin{proof}
  W.l.o.g. we may assume that $\Lambda$ has full rank, so $\bm{0} \in \textrm{int}(K)$.
  Following the characterization in Lemma~\ref{lem:PropertiesOfCoveringRadius}.(b), we fix an $x \in \setR^n$.
  For $r_1 := \mu(\Pi_{W^{\perp}}(\Lambda),\Pi_{W^{\perp}}(K))$
  we know that  $\Pi_{W^{\perp}}(x + r_1K) \cap \Pi_{W^{\perp}}(\Lambda) \neq \emptyset$. That means there is a $u_1 \in r_1K$ and a lattice point $y \in \Lambda$   so that $\Pi_{W^{\perp}}(x+u_1) = \Pi_{W^{\perp}}(y)$.
  Next, for $r_2 := \mu(\Lambda \cap W, K \cap W)$ we know that $(x+u_1-y + r_2 \cdot (K \cap W)) \cap (\Lambda \cap W) \neq \emptyset$
  which is equivalent to $(x+u_1 + r_2 \cdot (K \cap W)) \cap (y+(\Lambda \cap W)) \neq \emptyset$.
  Let $u_2 \in r_2 \cdot (K \cap W)$ be the vector so that $x+u_1+u_2 \in \Lambda$. Then $u_1+u_2 \in (r_1+r_2)K$
  by convexity, so $(x + (r_1+r_2) \cdot K) \cap \Lambda \neq \emptyset$.
  \iftrue
\begin{center} 
\psset{unit=1.6cm}
 \begin{pspicture}(-2,-1)(3,2.5)
    \psline[linewidth=2pt,linecolor=gray](-1.5,0)(2.5,0)\rput[c](2.5,5pt){$W$}
    \psline[linewidth=2pt,linecolor=gray](0,-1)(0,2.5)\rput[l](5pt,2.5){$W^{\perp}$}
    \psline[linestyle=dotted](-1.5,1)(2.5,1)
    \psline[linewidth=2.5pt,linecolor=darkgray](0,1)(0,1.9)\rput[l](3pt,1.7){$r_1\Pi_{W^{\perp}}(K)$}
    \rput[c](1.8,1.4){\pspolygon[fillstyle=solid,fillcolor=lightgray,linecolor=darkgray](0.4,-0.4)(-0.1,0.5)(-0.2,-0.2)}
   \rput[c](0,2){\psdots(-1,0)(0,0)(1,0)(2,0)}
    \psdots(-1,0)(0,0)(1,0)(2,0)
   \psdots(-1.5,1)(-0.5,1)(0.5,1)(1.5,1)(2.5,1)
   \rput[c](0,-2){\psdots(-1.5,1)(-0.5,1)(0.5,1)(1.5,1)(2.5,1)}
   \psline[linestyle=dotted](1.8,1.4)(1.8,0) \rput[c](1.8,-10pt){$\Pi_{W}(x)$}
   \psline[linestyle=dotted](1.8,1.4)(0,1.4) \rput[r](-5pt,1.4){$\Pi_{W^{\perp}}(x)$}
   \cnode[fillstyle=solid,fillcolor=white](1.8,1.4){3pt}{x}\nput[labelsep=2pt]{120}{x}{$x$}
   \multido{\N=-1+1}{4}{\cnode[fillstyle=solid,fillcolor=white](0,\N){3pt}{A}}
   \cnode*(0.5,1){2.5pt}{y2} \nput[labelsep=2pt]{90}{y2}{$y$}
   \cnode[fillstyle=solid,fillcolor=white](0,1){3pt}{p1} \nput[labelsep=2pt]{-135}{p1}{$\Pi_{W^{\perp}}(y)$}
   \pnode(2.2,1){A2}
   \ncline[arrowsize=5pt]{->}{x}{A2}\naput[labelsep=2pt]{$u_1$}
   \cnode*(2.5,1){3pt}{p2}
   \ncline[arrowsize=5pt]{->}{A2}{p2}\nbput[labelsep=2pt]{$u_2$}
   \rput[l](1.9,1.7){$x+r_1K$} 
  \end{pspicture}
\end{center}
\fi
\end{proof}

The natural extension of Lemma~\ref{lem:TriangleIneqGenCoveringRadius} to a filtration is as follows: 
\begin{lemma}[Decomposition of the covering radius] \label{lem:TriangleIneqForFiltration}
  Let $\Lambda \subseteq \setR^n$ be a lattice with any sequence of primitive sublattices
  $\{ \bm{0} \} = \Lambda_0 \subset \Lambda_1 \subset \dots \subset \Lambda_k= \Lambda$. Then for any compact convex set $K \subseteq \setR^n$ with $\bm{0} \in \textrm{rel.int}(K)$ and $\textrm{span}(\Lambda)=\textrm{span}(K)$, one has
  \[
  \mu(\Lambda,K) \leq \sum_{i=1}^k \mu\left( \Lambda_i/\Lambda_{i-1}, \Pi_{\textrm{span}(\Lambda_{i-1})^{\perp}}(K \cap \textrm{span}(\Lambda_i)) \right).
  \]
\end{lemma}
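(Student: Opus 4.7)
The plan is to induct on the length $k$ of the filtration, peeling off the top-most sublattice $\Lambda_{k-1}$ at each step by invoking the single-step triangle inequality Lemma~\ref{lem:TriangleIneqGenCoveringRadius}. For the base case $k = 1$, we have $\Lambda_0 = \{\bm{0}\}$ and $\Lambda_1 = \Lambda$, so $\textrm{span}(\Lambda_0)^{\perp}$ contains $\textrm{span}(\Lambda)$ and the right-hand side collapses to the single term $\mu(\Lambda,K)$, using the hypothesis $\textrm{span}(\Lambda) = \textrm{affine.hull}(K)$.

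For the inductive step, set $W := \textrm{span}(\Lambda_{k-1})$ and apply Lemma~\ref{lem:TriangleIneqGenCoveringRadius} to $\Lambda$ and the (primitive) sublattice $\Lambda_{k-1}$, which yields
\[
  \mu(\Lambda, K) \le \mu(\Lambda_{k-1}, K \cap W) + \mu\bigl(\Lambda/\Lambda_{k-1},\, \Pi_{W^{\perp}}(K)\bigr).
\]
Since $\textrm{span}(\Lambda_k) = \textrm{span}(\Lambda)$ contains $K$, we have $K \cap \textrm{span}(\Lambda_k) = K$, so the second summand matches the $i=k$ term in the target sum exactly.

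It remains to bound the first summand. I would apply the inductive hypothesis to the lattice $\Lambda_{k-1}$, the body $K \cap W$, and the filtration $\{\bm{0}\} = \Lambda_0 \subset \ldots \subset \Lambda_{k-1}$, whose length is $k-1$. The required hypotheses transfer routinely: $K \cap W$ is compact and convex, $\bm{0} \in \textrm{rel.int}(K \cap W)$ follows from $\bm{0} \in \textrm{rel.int}(K)$ together with $W \subseteq \textrm{span}(\Lambda) = \textrm{span}(K)$, and $\textrm{span}(\Lambda_{k-1}) = W = \textrm{affine.hull}(K \cap W)$. The inductive hypothesis then produces the sum
\[
  \sum_{i=1}^{k-1} \mu\Bigl(\Lambda_i/\Lambda_{i-1},\, \Pi_{\textrm{span}(\Lambda_{i-1})^{\perp}}\bigl((K \cap W) \cap \textrm{span}(\Lambda_i)\bigr)\Bigr),
\]
and since $\textrm{span}(\Lambda_i) \subseteq W$ for every $i \le k-1$, each inner intersection simplifies to $K \cap \textrm{span}(\Lambda_i)$, matching the remaining $k-1$ terms of the target expression. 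The geometric content is entirely supplied by Lemma~\ref{lem:TriangleIneqGenCoveringRadius}; the only (modest) work in this proof is the bookkeeping of spans, projections, and intersections carried out above, so no genuine obstacle arises.
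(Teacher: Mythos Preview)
Your proof is correct and takes essentially the same approach as the paper: both argue by induction, peeling off one layer at a time via Lemma~\ref{lem:TriangleIneqGenCoveringRadius}. The only cosmetic difference is that the paper organizes the induction as a decreasing index $i_0$ while keeping $\Lambda$ and $K$ fixed (proving the intermediate inequality $\mu(\Lambda,K) \leq \mu(\Lambda_{i_0-1}, K \cap \textrm{span}(\Lambda_{i_0-1})) + \sum_{i=i_0}^k \ldots$), whereas you induct on the filtration length and replace the ambient lattice by $\Lambda_{k-1}$; these are two phrasings of the same recursion.
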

\begin{proof}
We use Lemma~\ref{lem:TriangleIneqGenCoveringRadius} to show by induction over $i_0=k,k-1,\dots,1$ that 
\[\mu(\Lambda,K) \leq \mu(\Lambda_{i_0 - 1}, K \cap \textrm{span}(\Lambda_{i_0-1})) + \sum_{i=i_0}^k \mu\left( \Lambda_i/\Lambda_{i-1}, \Pi_{\textrm{span}(\Lambda_{i-1})^{\perp}}(K \cap \textrm{span}(\Lambda_i)) \right). \]
Indeed, for $i_0 = k$ this is exactly Lemma~\ref{lem:TriangleIneqGenCoveringRadius}. If it holds for some $i_0 > 1$, then
\begin{eqnarray*}
  \mu(\Lambda_{i_0 - 1}, K \cap \textrm{span}(\Lambda_{i_0-1})) &\le& \mu(\Lambda_{i_0 - 2}, K \cap \textrm{span}(\Lambda_{i_0-2})) + \\
                                                                & & \mu\left( \Lambda_{i_0-1}/\Lambda_{i_0-2}, \Pi_{\textrm{span}(\Lambda_{i_0-2})^{\perp}}(K \cap \textrm{span}(\Lambda_{i_0 -1})) \right), 
\end{eqnarray*}
since $\textrm{span}(\Lambda_{i_0-2}) \subset \textrm{span}(\Lambda_{i_0-1})$. So the claim follows by induction, and taking $i_0 := 1$ yields the statement.
\end{proof}

We also need the following fact: 
\begin{lemma}[Monotonicity of $\mu_{KL}$] \label{lem:MonotonicityOfMuKL}
  For any lattice $\Lambda \subseteq \setR^n$, compact convex set $K$ with $\textrm{span}(\Lambda) = \textrm{affine.hull}(K)$
  and subspace $V \subseteq \textrm{span}(\Lambda)$ one has
  $\mu_{KL}(\Pi_{V}(\Lambda),\Pi_{V}(K)) \leq \mu_{KL}(\Lambda,K)$.
\end{lemma}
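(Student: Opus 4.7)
The plan is to observe that any subspace $W$ appearing in the maximization that defines $\mu_{KL}(\Pi_V(\Lambda),\Pi_V(K))$ lies inside $V$, hence inside $\textrm{span}(\Lambda)$, and is therefore a valid candidate in the maximization defining $\mu_{KL}(\Lambda,K)$. The key elementary fact I would use is that orthogonal projections compose cleanly: whenever $W \subseteq V$, one has $\Pi_W \circ \Pi_V = \Pi_W$, because for any $x$ the vector $\Pi_V(x) - x$ lies in $V^\perp \subseteq W^\perp$ and hence is killed by $\Pi_W$.

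Concretely, fix a $d$-dimensional subspace $W \subseteq \textrm{span}(\Pi_V(\Lambda)) \subseteq V$. Applying $\Pi_W \circ \Pi_V = \Pi_W$ gives
\[
\Pi_W(\Pi_V(\Lambda)) = \Pi_W(\Lambda) \qquad \text{and} \qquad \Pi_W(\Pi_V(K)) = \Pi_W(K),
\]
so the two ratios
\[
\Big(\tfrac{\det(\Pi_W(\Pi_V(\Lambda)))}{\Vol_d(\Pi_W(\Pi_V(K)))}\Big)^{1/d} \quad \text{and} \quad \Big(\tfrac{\det(\Pi_W(\Lambda))}{\Vol_d(\Pi_W(K))}\Big)^{1/d}
\]
are literally equal. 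The second one is by definition bounded above by $\mu_{KL}(\Lambda,K)$, and taking the supremum over all admissible $W$ yields the claim.

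There is essentially no obstacle — the proof is one line once one notes $\Pi_W \circ \Pi_V = \Pi_W$. The only mild subtlety is interpretational: $\Pi_V(\Lambda)$ need not itself be discrete for an arbitrary subspace $V$, so one should either implicitly restrict attention to subspaces $V$ for which $\Pi_V(\Lambda)$ is a lattice (the way the lemma is actually applied later in the paper), or note that $\det(\Pi_W(\Lambda))$ and $\Vol_d(\Pi_W(K))$ depend only on $W$ and not on whether the intermediate object $\Pi_V(\Lambda)$ is discrete, so the inequality makes sense via continuity in all cases.
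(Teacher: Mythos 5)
Your proof is correct and is essentially the same as the paper's: both hinge on the identity $\Pi_W \circ \Pi_V = \Pi_W$ for $W \subseteq V$, which makes the two ratios literally equal for any candidate subspace $W$ on the left side. The extra remark about discreteness of $\Pi_V(\Lambda)$ is a reasonable observation but not needed for the argument.
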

\begin{proof}
  Let $W \subseteq V$ be the subspace attaining the left side with $\dim W = d$. Then
  \[
 \mu_{KL}(\Pi_{V}(\Lambda),\Pi_{V}(K)) = \Big(\frac{\det(\Pi_W(\Pi_V(\Lambda)))}{\Vol_d(\Pi_W(\Pi_V(K)))}\Big)^{1/d} = \Big(\frac{\det(\Pi_W(\Lambda))}{\Vol_d(\Pi_W(K))}\Big)^{1/d} \leq \mu_{KL}(\Lambda,K),
  \]
  using that $\Pi_W(\Pi_V(x)) = \Pi_W(x)$ for all $x \in \setR^n$ as  $W \subseteq V$.
\end{proof}

\subsection{Approximately stable lattices and the covering radius}

Already Dadush and Regev~\cite{TowardsReverseMinkowskiDadushRegevFOCS16} observed that the Reverse Minkowski Theorem implies a strong upper bound on the
covering radius of a stable lattice. This is based on the following result of Banaszczyk that almost all Gaussian weight is concentrated on lattice points of length at most $O(\sqrt{n})$:
\begin{lemma}[{\cite[Lemma 1.5.(ii)]{TransferenceTheorems-Banaszczyk93}}] \label{lem:HoleFromEuclideanBall}
  For any full rank lattice $\Lambda \subseteq \setR^n$, $c \geq \frac{1}{2}$ and any $u \in \setR^n$ one has $\rho_1( (\Lambda + u) \setminus c \sqrt{n} B_2^n) \leq 2^{-c^2 n} \cdot \rho_1(\Lambda)$.
\end{lemma}
This can then be used as follows: 
\begin{lemma}[Covering radius for stable lattices I~\cite{TowardsReverseMinkowskiDadushRegevFOCS16,Regev-SD-ReverseMinkowskiTheoremSTOC17}] \label{lem:CovRadiusForStableLatticeWithBall}
For any stable lattice $\Lambda \subseteq \setR^n$ one has  $\mu(\Lambda,B_2^n) \leq O(\sqrt{n} \log(2n))$.
\end{lemma}
\begin{proof}
  Let $s := C\log(2n)$ be the parameter from the Reverse Minkowski Theorem (Theorem~\ref{thm:ReverseMinkowskiTheorem}). By Lemma~\ref{lem:PropertiesOf2StableLattices}.(c) (whose proof uses the Reverse Minkowski Theorem) we know that $\rho_s(\Lambda + u) \geq \frac{1}{3}\rho_s(\Lambda)$ for all translates $u$.  
 Now suppose that the lattice $\Lambda$ has a ``hole'' of radius $sc\sqrt{n}$ with center $u$, i.e. $\Lambda \cap (u + sc\sqrt{n} B_2^n) = \emptyset$. Then
  \[
    \frac{1}{3} \rho_s(\Lambda) \leq \rho_s(\Lambda+u) \stackrel{\textrm{hole}}{=} \rho_s((\Lambda+u) \setminus cs\sqrt{n}B_2^n) \stackrel{\textrm{Lem~\ref{lem:HoleFromEuclideanBall}}}{\leq} 2^{-c^2 n} \rho_s(\Lambda)
    \]
   which is a contradiction for say $c \geq 2$ and $n \geq 1$.
\end{proof}
For our purpose we need to generalize the covering radius bound to work for $t$-stable lattices and --- most crucially --- for arbitrary symmetric convex bodies. Luckily, the follow-up work of Banaszczyk~\cite{Banaszczyk1996TransferenceTheoremsForGeneralConvexBodies} provides a framework to do exactly that!
For a symmetric convex body $Q \subseteq \setR^n$, we consider the following quantity 
\[
 \beta(Q) = \sup_{\Lambda \subseteq \setR^n\textrm{ lattice}} \sup_{u \in \setR^n} \frac{\rho_1((u + \Lambda) \setminus Q)}{\rho_1(\Lambda)}.
\]
Note that always $0 < \beta(Q) \leq 1$.
As part of a more general Transference Theorem, Banaszczyk showed how to relate the $\ell$-value of a body to its $\beta$-value: 
\begin{lemma}[Banaszczyk~{\cite[Lemma 2]{Banaszczyk1996TransferenceTheoremsForGeneralConvexBodies}}] \label{lem:BanaszczykLKvsBetaK}
  For any $\varepsilon>0$, there is a $\delta > 0$ so that the following holds: for any
 $n \in \setN$ and any symmetric convex body $Q \subseteq \setR^n$ with $\ell_Q \leq \delta$ one has $\beta(Q) \leq \varepsilon$.
\end{lemma}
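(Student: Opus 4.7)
The plan follows Banaszczyk's classical argument: first use Markov's inequality on the Gaussian measure to show that $\ell_Q \leq \delta$ implies a small continuous Gaussian mass outside $Q$, then invoke Banaszczyk's transference lemma to propagate this bound from the continuous Gaussian to the discrete Gaussian on an arbitrary lattice coset $u + \Lambda$.

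\emph{Step 1 (continuous Gaussian estimate).} By the definition of $\ell_Q$ and Markov's inequality applied to $\|g\|_Q^2$,
\[
\Pr_{g \sim N(\bm{0}, I_n)}[g \notin Q] \;=\; \Pr\bigl[\|g\|_Q > 1\bigr] \;\leq\; \E[\|g\|_Q^2] \;=\; \ell_Q^2 \;\leq\; \delta^2 .
\]
Converting to the $\rho_1$ normalization (where $\rho_1(x) = e^{-\pi\|x\|_2^2}$ integrates to $1$ on $\setR^n$ and corresponds to the density of $N(\bm{0}, I_n/(2\pi))$), this rescales to $\int_{\setR^n \setminus Q} \rho_1(x)\, dx \leq \delta^2/(2\pi)$, possibly after replacing $Q$ by $Q/\sqrt{2\pi}$, which only multiplies $\ell_Q$ by an absolute constant.

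\emph{Step 2 (discrete transference).} Apply Banaszczyk's transference lemma, which asserts the existence of a universal function $F$ with $F(t) \to 0$ as $t \to 0^+$ such that, for every symmetric convex body $Q \subseteq \setR^n$, every lattice $\Lambda \subseteq \setR^n$, and every $u \in \setR^n$,
\[
\frac{\rho_1\bigl((u+\Lambda) \setminus Q\bigr)}{\rho_1(\Lambda)} \;\leq\; F\!\left(\int_{\setR^n \setminus Q}\rho_1(x)\,dx\right) .
\]
Combining with Step~1 gives $\beta(Q) \leq F(\delta^2/(2\pi))$, which can be forced below $\varepsilon$ by choosing $\delta$ small enough in a manner that depends only on $\varepsilon$, not on $n$ or $\Lambda$.

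\emph{Main obstacle.} The technical heart is Banaszczyk's transference lemma itself. Its proof starts from the ball case $Q = rB_2^n$, handled by Poisson summation together with the self-duality of $\rho_1$ under the Fourier transform, yielding dimension-free exponentially small bounds once $r$ is large compared to $\sqrt{n/(2\pi)}$. The passage from balls to arbitrary symmetric convex bodies rests on Pisier's $K$-convexity theorem (the same tool underlying Theorem~\ref{thm:PisierRescaling}), which allows one to put $Q$ into a position where the ball bound can be applied uniformly across all directions, while also controlling the polar. Preserving dimension-independence through this reduction is the delicate point and is precisely what allows $\delta$ to depend only on $\varepsilon$.
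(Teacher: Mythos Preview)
The paper does not supply its own proof of this lemma; it is quoted verbatim from Banaszczyk's 1996 paper and used as a black box in Proposition~\ref{prop:CovRadiusOf2StableLattice}. So there is no ``paper's proof'' to compare against, and the relevant question is whether your sketch stands on its own as an account of Banaszczyk's argument.

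As written, Step~2 is circular. You invoke a ``Banaszczyk transference lemma'' asserting that for \emph{every} symmetric convex body $Q$,
\[
\frac{\rho_1((u+\Lambda)\setminus Q)}{\rho_1(\Lambda)} \le F\!\left(\int_{\setR^n\setminus Q}\rho_1(x)\,dx\right)
\]
with a dimension-free $F$. But this statement, combined with the trivial Step~1, \emph{is} the lemma you are trying to prove; Banaszczyk does not have such a result sitting around as a separate input. What he actually has available a priori is the $\ell_2$ case only: for $Q=rB_2^n$ with $r$ modestly larger than $\sqrt{n/(2\pi)}$, Poisson summation gives an explicit, dimension-free bound on the ratio. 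The entire content of the 1996 paper is the passage from balls to general symmetric bodies.

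Your ``Main obstacle'' paragraph correctly identifies where the work lies and names the right tool ($K$-convexity / the $\ell$-position), but it is a description of Banaszczyk's strategy rather than an argument. In particular, the sentence ``put $Q$ into a position where the ball bound can be applied uniformly across all directions'' hides the substantive step: one does not simply compare $Q$ to a single ball. The actual mechanism involves bounding a supremum of Gaussian processes (or, equivalently, controlling $\ell_Q$ and $\ell_{Q^\circ}$ simultaneously via the $MM^*$ estimate) and feeding this into a tail bound that respects the lattice structure. If you want this to be a proof rather than a summary, Step~2 should be replaced by the ball case, and the reduction from general $Q$ to balls should be carried out explicitly.
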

For example for the Euclidean ball we have $\ell_{B_2^n} = \sqrt{n}$ and so for any $\varepsilon > 0$ there is a constant $c>0$ so that
$\ell_{c\sqrt{n}B_2^n} = \frac{1}{c}$ is small enough and so $\rho_1((u+\Lambda) \setminus c\sqrt{n}B_2^n) \leq \varepsilon \cdot \rho_1(\Lambda)$ which is consistent with what we know from Lemma~\ref{lem:HoleFromEuclideanBall}.
\begin{proposition}[Covering radius for stable lattices II] \label{prop:CovRadiusOf2StableLattice}
  Let $\Lambda \subseteq \setR^n$ be a full rank lattice such that $\frac{1}{r} \Lambda$ is $t$-stable and let $Q \subseteq \setR^n$ be a symmetric convex body.
  Then $\mu(\Lambda,Q) \leq O(\log(2n)) \cdot t \cdot r \cdot \ell_Q$.
\end{proposition}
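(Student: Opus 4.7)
The plan is to combine Banaszczyk's theorem (Lemma~\ref{lem:BanaszczykLKvsBetaK}) with the ``mass balance'' estimate for $t$-stable lattices (Lemma~\ref{lem:PropertiesOf2StableLattices}(c)). The target statement says $(x+\lambda Q)\cap \Lambda\neq\emptyset$ for every $x\in\setR^n$ and $\lambda=O(\log n)\cdot t\cdot r\cdot \ell_Q$; by symmetry of $Q$ this is equivalent to $(u+\tilde\Lambda)\cap Q'\neq\emptyset$ after a suitable rescaling of $\Lambda$ and $Q$, and I will prove nonemptiness by showing that the discrete Gaussian mass assigned to that intersection is strictly positive.

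First I will fix $s=C\log n$ as in Lemma~\ref{lem:PropertiesOf2StableLattices} and apply Lemma~\ref{lem:BanaszczykLKvsBetaK} with $\varepsilon=\tfrac16$ to obtain a universal $\delta>0$ such that any symmetric convex body $Q'$ with $\ell_{Q'}\le\delta$ satisfies $\rho_1((u+\tilde\Lambda)\setminus Q')\le \tfrac16\rho_1(\tilde\Lambda)$ for every lattice $\tilde\Lambda$ and every shift $u$. I will then set $\lambda:=rst\cdot \ell_Q/\delta$, $\tilde\Lambda:=\frac{1}{rst}\Lambda=\frac{1}{st}\Lambda'$ (where $\Lambda=r\Lambda'$ with $\Lambda'$ being $t$-stable), and $Q':=\frac{\delta}{\ell_Q}Q$, so that $\ell_{Q'}=\delta$ and Banaszczyk's estimate is in force.

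The next step is to lower-bound $\rho_1(u+\tilde\Lambda)$ uniformly in $u$. Since $\rho_1(\tilde\Lambda+u)=\rho_{st}(\Lambda'+(st)u)$ and $\Lambda'$ is $t$-stable, Lemma~\ref{lem:PropertiesOf2StableLattices}(c) gives
\[
\rho_1(\tilde\Lambda+u)\;\ge\;\tfrac13\,\rho_{st}(\Lambda')\;=\;\tfrac13\,\rho_1(\tilde\Lambda).
\]
Combining this with Banaszczyk yields
\[
\rho_1\bigl((u+\tilde\Lambda)\cap Q'\bigr)\;\ge\;\rho_1(u+\tilde\Lambda)-\rho_1((u+\tilde\Lambda)\setminus Q')\;\ge\;\bigl(\tfrac13-\tfrac16\bigr)\rho_1(\tilde\Lambda)\;>\;0,
\]
so $(u+\tilde\Lambda)\cap Q'\neq\emptyset$ for every $u$. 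Scaling back by $rst$ and using the symmetry of $Q$ to flip sign gives that for every $x\in\setR^n$ there is $y\in\Lambda$ with $x-y\in rst\cdot(\ell_Q/\delta)\cdot Q=\lambda Q$, hence $\mu(\Lambda,Q)\le\lambda=O(\log n)\cdot t\cdot r\cdot \ell_Q$ via Lemma~\ref{lem:PropertiesOfCoveringRadius}(b).

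The only subtle point is the bookkeeping of the rescalings: one has to keep track that passing from $\Lambda=r\Lambda'$ at Gaussian parameter $st$ to the normalized lattice $\tilde\Lambda$ at parameter $1$ exactly matches the normalization required by Banaszczyk's lemma, and that the constant $\tfrac13$ from Lemma~\ref{lem:PropertiesOf2StableLattices}(c) is dominated by the $1-\varepsilon$ budget left over from Banaszczyk, which is why the choice $\varepsilon=\tfrac16$ (or any $\varepsilon<\tfrac13$) works. No other step requires new ideas beyond invoking the two black-box lemmas in the right order.
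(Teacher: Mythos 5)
Your proof is correct in substance and is essentially the paper's argument: both reduce to showing, after rescaling, that the discrete Gaussian mass of a shifted lattice inside the well-scaled body $Q'$ is strictly positive, by pairing Banaszczyk's $\beta$-bound (Lemma~\ref{lem:BanaszczykLKvsBetaK}, with $\varepsilon < 1/3$) against the $\tfrac13$ lower bound on $\rho_{st}(\Lambda'+u)/\rho_{st}(\Lambda')$ from Lemma~\ref{lem:PropertiesOf2StableLattices}(c). The paper phrases this as a proof by contradiction while you argue directly that $\rho_1((u+\tilde\Lambda)\cap Q')>0$, but that is a presentational, not a mathematical, difference. One small slip: you set $Q' := \tfrac{\delta}{\ell_Q}Q$ and claim $\ell_{Q'}=\delta$, but since $\ell_{cQ}=\tfrac1c\,\ell_Q$ the correct normalization is $Q' := \tfrac{\ell_Q}{\delta}Q$; your final scaling-back step ($rst\cdot(\ell_Q/\delta)\cdot Q = \lambda Q$) implicitly uses the corrected factor, so the conclusion stands once the definition of $Q'$ is fixed.
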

\begin{proof}
  Let $\varepsilon>0$ be a small enough constant that we determine later.
  Let $\delta$ be the constant so that Lemma~\ref{lem:BanaszczykLKvsBetaK} applies (w.r.t. $\varepsilon$).
  The claim is invariant under scaling $Q$, hence we may scale $Q$ so that $\ell_Q \leq \delta$ and consequently $\beta(Q) \leq \varepsilon$.
  We may also scale the lattice so that $\Lambda$ is $t$-stable (i.e. $r=1$). 
  It suffices to prove that under these assumptions,  $\mu(\Lambda,Q) \leq s \cdot t$ where $s := C\log(2n)$
  is the parameter from Lemma~\ref{lem:PropertiesOf2StableLattices}. 
  Now suppose for the sake of contradiction that there is a translate $u \in \setR^n$ so that $(u+\Lambda) \cap stQ = \emptyset$.
  Since $\beta(Q) \leq \varepsilon$, we know that
  \[
    \rho_1\Big( \Big(\frac{u}{st} + \frac{\Lambda}{st}\Big) \setminus Q\Big) \leq \varepsilon \rho_1\Big(\frac{\Lambda}{st}\Big).
  \]
 Multiplying the sets and parameters by $st$ gives
  \[
 \rho_{st}((u+\Lambda) \setminus stQ) \leq \varepsilon \rho_{st}(\Lambda). \quad (*)
  \]
 Using that $\Lambda$ is $t$-stable, we get
  \[
\frac{1}{3}\rho_{st}(\Lambda) \stackrel{\textrm{Lem~\ref{lem:PropertiesOf2StableLattices}}}{\leq} \rho_{st}(u+\Lambda) \stackrel{(u+\Lambda) \cap stQ = \emptyset}{=} \rho_{st}((u + \Lambda) \setminus stQ) \stackrel{(*)}{\leq} \varepsilon \rho_{st}(\Lambda).
  \]
  Then choosing $\varepsilon \in (0, \frac{1}{3})$ gives a contradiction. \qedhere
%
\end{proof}
We note that already for the integer lattice and the Euclidean ball one has $\mu(\setZ^n,B_2^n) = \frac{\sqrt{n}}{2}$. Hence Prop~\ref{prop:CovRadiusOf2StableLattice} is almost tight, except (possibly) the logarithmic factor.

Looking forward, the main challenge is to extend the statement of Prop~\ref{prop:CovRadiusOf2StableLattice} to all lattices using the canonical filtration.

\section{Analyzing the canonical filtration} \label{sec:AnalyzingCanonicalFiltration}

First, we want to prove the inequality from Theorem~\ref{thm:KLConj} (with an even better exponent)
in the special case that the lattice is approximately stable and the body $K$ is in $\ell$-position. We will not actually
use Prop~\ref{prop:MainIneqforWellScaledCase} later in this form, but it will eventually guide us towards a general proof strategy.
\begin{proposition} \label{prop:MainIneqforWellScaledCase}
  Let $\Lambda \subseteq \setR^n$ be a full rank $t$-stable lattice with $\det(\Lambda)=1$ and let $K$ be a convex body with $b(K)=\bm{0}$ 
  so that $K_{\textrm{sym}}$ is in $\ell$-position. Then $\mu(\Lambda,K) \leq O(t\log^2(2n)) \cdot R_{\setR^n}(\Lambda,K)$ and 
  in particular, $\mu(\Lambda,K) \leq O(t \log^2(2n)) \cdot \mu_{KL}(\Lambda,K)$.
\end{proposition}
\begin{proof}
  We can upper bound the covering radius by applying the estimate for stable lattices
  from Prop~\ref{prop:CovRadiusOf2StableLattice}:
  \[
   \mu(\Lambda,K) \stackrel{K \supseteq K_{\textrm{sym}}}{\leq} \mu(\Lambda,K_{\textrm{sym}}) \stackrel{\textrm{Prop~\ref{prop:CovRadiusOf2StableLattice}}}{\lesssim} t\log(2n) \cdot \ell_{K_{\textrm{sym}}}
  \]
  Next, we lower bound $\mu_{KL}(\Lambda,K)$ by simply choosing the subspace $W := \setR^n$ as witness. Then
  \[
   \mu_{KL}(\Lambda,K) \geq R_{\setR^n}(\Lambda,K) = \Big(\frac{\det(\Lambda)}{\Vol_n(K)}\Big)^{1/n} \stackrel{(**)}{\gtrsim} \frac{1}{\Vol_n(K_{\textrm{sym}})^{1/n}} \stackrel{\textrm{Cor~\ref{cor:UrysohnInequalityII}}}{\gtrsim} \frac{n}{\ell_{K_{\textrm{sym}}^{\circ}}} \stackrel{\ell\textrm{-position}}{\gtrsim} \frac{\ell_{K_{\textrm{sym}}}}{\log(2n)},
 \]
 where we use in $(**)$ that $\Vol_n(K_{\textrm{sym}}) \geq 2^{-n}\Vol_n(K)$ by Theorem~\ref{thm:MilmanPajorInequality}.
  Combining both inequalities gives the claim.
\end{proof}

The statement of Theorem~\ref{thm:KLConj} is invariant under applying the same linear transformation to both
$\Lambda$ and $K$ and hence we may assume w.l.o.g. that the symmetrizer of $K$ is in $\ell$-position. Of course, the lattice $\Lambda$ may not be (approximately) stable. But we know from Def~\ref{def:tStableFiltration} that the quotient lattices of a $t$-stable filtration are indeed $t$-stable, and hence their covering radius can be analyzed
by generalizing the proof of Prop~\ref{prop:MainIneqforWellScaledCase}.
Additionally, we can analyze the lower bounds on $\mu_{KL}(\Lambda,K)$ that can be derived
from subspaces arising from the filtration.
\begin{lemma} \label{lem:AnalyzingTheCanonicalFiltration}
  Let $K \subseteq \setR^n$ be a convex body with $b(K) = \bm{0}$. Let
  $\{ \bm{0} \} = \Lambda_0 \subseteq \dots \subseteq \Lambda_k = \Lambda$ be a well-separated $t$-stable filtration of a full rank lattice $\Lambda \subseteq \setR^n$. Denote $d_{i} := \textrm{rank}(\Lambda_i / \Lambda_{i-1})$ and $r_i := \det(\Lambda_i/\Lambda_{i-1})^{1/d_i}$.  Then
  \begin{enumerate}
  \item[(I)] For all $i \in [k]$ one has $\mu(\Lambda_i, K_{\textrm{sym}} \cap \textrm{span}(\Lambda_i)) \lesssim t \log(2n) \cdot r_i \cdot \ell_{K_{\textrm{sym}}}$.
  \item[(II)] For any $i \in [k]$ one has $\mu_{KL}(\Lambda,K) \geq R_W(\Lambda,K) \gtrsim r_i \cdot (\frac{d_{\geq i}}{n})^6 \cdot \frac{d_{\geq i}}{\ell_{K_{\textrm{sym}}^{\circ}}}$ where $d_{\geq i} := d_i + \dots + d_k$ and  $W := \textrm{span}(\Lambda_{i-1})^{\perp}$ with $\dim(W)=d_{\geq i}$.
  \end{enumerate}
  Moreover, (I) and (II) remain both true if instead of $b(K) = \bm{0}$ one has $\|b(K)\|_{K-K} \leq \frac{1}{2(n+1)}$.
\end{lemma}

\begin{proof}
  We prove both parts separately. \\
{\bf Part (I).}  
We abbreviate $K_j := \Pi_{\textrm{span}(\Lambda_{j-1})^{\perp}}(K_{\textrm{sym}} \cap \textrm{span}(\Lambda_j))$ for $j \in [k]$. Then $K_j$ is convex and symmetric and $\frac{1}{r_j}(\Lambda_j/\Lambda_{j-1})$ is a $t$-stable lattice. Hence we can bound the covering radii of the
individual quotient lattices by
  \begin{equation} \label{eq:CoveringRadiusQuotientLattice}
  \mu\big(\Lambda_j/\Lambda_{j-1},K_j\big) \stackrel{\textrm{Prop~\ref{prop:CovRadiusOf2StableLattice}}}{\lesssim} \log(2n) \cdot t \cdot r_j \cdot \ell_{K_j} \stackrel{\textrm{Lem~\ref{lem:LvalueOfProjectedIntersection}}}{\leq} \log(2n) \cdot t \cdot r_j \cdot \ell_{K_{\textrm{sym}}}. 
  \end{equation}
   Then for any $i \in [k]$, using the triangle inequality for the covering radius,
  \begin{eqnarray*}
    \mu(\Lambda_{i}, K_{\textrm{sym}} \cap \textrm{span}(\Lambda_{i})) &\stackrel{\textrm{Lem~\ref{lem:TriangleIneqForFiltration}}}{\leq}& \sum_{j=1}^{i} \mu\left( \Lambda_j / \Lambda_{j-1},K_j \right) \\ 
      &\stackrel{\eqref{eq:CoveringRadiusQuotientLattice}}{\lesssim}& \log (2n) \cdot \ell_{K_{\textrm{sym}}} \cdot t \cdot \sum_{j=1}^{i} r_j  \\ &\lesssim& \log (2n) \cdot \ell_{K_{\textrm{sym}}} \cdot t \cdot r_{i}, 
  \end{eqnarray*}
  where in the last step we use that $r_1 < \dots < r_{i}$ and $r_{j} \leq \frac{1}{2}r_{j+2}$ for all $j$\footnote{We would like to point out that for individual indices $j$, the loss in the inequality $\mu\left( \Lambda_j / \Lambda_{j-1},K_j \right) \lesssim \ell_{K_{\textrm{sym}}} \cdot t \cdot r_j$ may be much larger than the polylogarithmic factor we are aiming for. But the $r_j$'s are geometrically increasing and it suffices if that inequality is reasonably tight for \emph{some} index $j$ where $\mu\left( \Lambda_j / \Lambda_{j-1},K_j \right)$ contributes significantly to the covering radius.}. \qed
  
  {\bf Part (II).} Again, we fix an index $i \in [k]$. As indicated above, we 
  choose the subspace $W := \textrm{span}(\Lambda_{i-1})^{\perp}$ as witness and note that $\Pi_W(\Lambda) = \Lambda / \Lambda_{i-1}$.
  Then the normalized determinant of this quotient lattice is
  \begin{equation} \label{eq:detVsRi}
    \det(\Lambda / \Lambda_{i-1})^{1/d_{\geq i}} = \Big( \prod_{j = i}^k r_j^{d_j}\Big)^{1/\sum_{j = i}^k d_j} \geq r_i,
  \end{equation}
  since the middle expression denotes a geometric average of values $r_i < r_{i+1} < \dots < r_k$. Then
  \begin{eqnarray*}
    R_W(\Lambda,K) = \Big(\frac{\det(\Pi_W(\Lambda))}{\Vol_{d_{\geq i}}(\Pi_W(K))}\Big)^{1/{d_{\geq i}}} 
    \stackrel{\eqref{eq:detVsRi}}{\geq} \frac{r_i}{\Vol_{d_{\geq i}}(\Pi_W(K))^{1/{d_{\geq i}}}} \stackrel{\textrm{Lem~\ref{lem:VolOfProjVsLValueOfSymmetrizer}}}{\gtrsim} r_i\cdot \Big(\frac{d_{\geq i}}{n}\Big)^6 \cdot \frac{d_{\geq i}}{\ell_{K_{\textrm{sym}}^{\circ}}}. \qedhere 
  \end{eqnarray*}
\end{proof}

Let us take a step back and illustrate how the analysis from Prop~\ref{lem:AnalyzingTheCanonicalFiltration} can be used to prove a polylogarithmic KL-type inequality for the case that $K$ is the Euclidean ball. 
Here we will be content with a weaker bound of $O(\log^2(2n))$ which allows us to simplify the arguments of \cite{Regev-SD-ReverseMinkowskiTheoremSTOC17}.
Since projections and slices of Euclidean balls are again Euclidean balls, the set $K_j$ in the proof of Prop~\ref{lem:AnalyzingTheCanonicalFiltration}.(I) is simply a Euclidean ball of radius $1$. As $\ell_{B^{d_j}_2} = \sqrt{d_j}$ for all $j$, we can strengthen Prop~\ref{lem:AnalyzingTheCanonicalFiltration}.(I) for $i=k$ to
\[
  \mu(\Lambda,B_2^n) \lesssim t\log(2n) \cdot \sum_{j=1}^k\sqrt{d_j} r_j
\]
Next, abbreviate the subspace used in the lower bound as $W_i := \textrm{span}(\Lambda_{i-1})^{\perp}$.
We can use that
$\textrm{Vol}_{d_{\geq i}}(\Pi_{W_i}(B_2^n))^{1/d_{\geq i}} = \Vol_{d_{\geq i}}(B_2^{d_{\geq i}})^{1/d_{\geq i}} \asymp \frac{1}{\sqrt{d_{\geq i}}}$, which leads to a strengthening of Prop~\ref{lem:AnalyzingTheCanonicalFiltration}.(II) to
\[
 R_{W_i}(\Lambda,K) \gtrsim \frac{r_i}{\Vol_{d_{\geq i}}(B_{2}^{d_{\geq i}})^{1/d_{\geq i}}} \asymp r_i \cdot \sqrt{d_{\geq i}} \geq r_i \sqrt{d_i}
\]
Then fixing the index $i$ that maximizes $\sqrt{d_i} r_i$ and using that the indices $r_j$ grow geometrically while $d_1 + \dots + d_k = n$, one obtains 
$\sqrt{d_i} r_i \geq \Theta(\frac{1}{\log(n)}) \sum_{j=1}^k \sqrt{d_j} r_j$ and so
\[
 \mu(\Lambda,B_2^n) \lesssim t \log(2n) \sum_{j=1}^k\sqrt{d_j} r_j \lesssim t \log^2(2n) \cdot \sqrt{d_{i}} r_{i} \lesssim t \log^2(2n) \cdot R_{W_{i}}(\Lambda,B_2^n)
\]
Now, let $K$ be an arbitrary convex body and let us try to derive Theorem~\ref{thm:KLConj} from Lemma~\ref{lem:AnalyzingTheCanonicalFiltration} --- but this time without the strengthenings that we used for the Euclidean ball.
In Lemma~\ref{lem:AnalyzingTheCanonicalFiltration}.(I) we would inevitably want to pick $i = k$ so that $\textrm{span}(\Lambda_i) = \setR^n$. In Lemma~\ref{lem:AnalyzingTheCanonicalFiltration}.(II) we leave the choice of $i \in [k]$ open. Then
\[
 \frac{\mu(\Lambda,K)}{\mu_{KL}(\Lambda,K)} \stackrel{\textrm{Lem~\ref{lem:AnalyzingTheCanonicalFiltration}}.(I)+(II)}{\lesssim} t\log(2n) \cdot \frac{r_k}{r_i} \cdot \Big(\frac{n}{d_{\geq i}}\Big)^6 \frac{1}{d_{\geq i}} \cdot \ell_{K_{\textrm{sym}}} \cdot \ell_{K_{\textrm{sym}}^{\circ}} \lesssim t\log(2n)^2 \cdot \Big(\frac{n}{d_{\geq i}}\Big)^7 \cdot \frac{r_k}{r_i}
\]
where the second inequality follows if $K_{\textrm{sym}}$ is in $\ell$-position. But going forward, there may not be an index $i$ so that $\frac{r_k}{r_i}$ is small while $d_{\geq i}$ is close to $n$! This issue is not just an artifact of a careless analysis.
For example, if $K = B_{\infty}^n$ (which is already in $\ell$-position) and $W$ is a $d$-dimensional subspace, $\Vol_d(\Pi_W(K))^{1/d}$ is determined only up to factors of the order  $\sqrt{\frac{n}{d}}$. 
In particular only knowing that  $K_{\textrm{sym}}$ is in $\ell$-position does not provide sufficient information to determine a subspace satisfying Theorem~\ref{thm:KLConj}. 
This is the time to recall that Lemma~\ref{lem:AnalyzingTheCanonicalFiltration}.(I) is more general than we needed so far. In particular, it can be used to upper bound the covering radius for certain \emph{slices} of $K_{\textrm{sym}}$. This can then be combined into an iterative approach to obtain a stronger bound. 

\section{Proof of the main theorem\label{sec:WholeMainProof}}

We will spend the next two subsections proving our main Theorem~\ref{thm:KLConj} by induction over $n$. At each step, we split the lattice $\Lambda$ and the convex body $K$ into a subspace section of dimension at least $n/2$ and a projection onto its orthogonal complement. 

\subsection{The inductive step\label{sec:InductiveStep}}

First, we give a self-contained
description of the inductive step, then later in Section~\ref{sec:MainProof} we describe the main part of the induction.
\begin{proposition} \label{prop:MainArgument}
  There is a universal constant $C_0 > 0$ so that the following holds: 
  For any full rank lattice $\Lambda \subseteq \setR^n$ and any convex body $K \subseteq \setR^n$
  with $b(K) = \bm{0}$,
  there exists a primitive sublattice $\Lambda' \subseteq \Lambda$ with $\rank(\Lambda') \geq n/2$ so that
  \[
    \mu\big(\Lambda', K_{\textrm{sym}} \cap \textrm{span}(\Lambda')\big) \leq  C_0\log^2 (2n) \cdot \mu_{KL}(\Lambda,K).
  \]
\end{proposition}

\begin{proof}
The claim is invariant under applying a linear transformation to $K$ and $\Lambda$. Hence we may assume that $K_{\textrm{sym}}$ is in $\ell$-position, i.e.   $\ell_{K_{\textrm{sym}}} \cdot \ell_{K_{\textrm{sym}}^{\circ}} \leq O(n \log (2n))$.
  Consider a well-separated 2-stable filtration  $\{ \bm{0} \} = \Lambda_{0} \subset \dots \subset \Lambda_k = \Lambda$ given by Corollary~\ref{cor:ExistenceTwoStableWellSeparatedFiltration}.
As before, we use the parameters
\[
  d_i := \rank(\Lambda_i/\Lambda_{i-1}) \quad \textrm{and} \quad r_i := \mathrm{nd}(\Lambda_i/\Lambda_{i-1}) = \det(\Lambda_i/\Lambda_{i-1})^{1/d_i},
\]
which are the rank and normalized determinants of the quotient lattices in the filtration, and we abbreviate $d_{\geq i} := d_i + \dots + d_k$.

Let $i^* \in \{ 1,\dots,k\}$ be the minimal index so that $\textrm{rank}(\Lambda_{i^*}) \geq \frac{n}{2}$ and set $W := \textrm{span}(\Lambda_{i^*-1})^{\perp}$. Note that then $\dim(W) = d_{\geq i^*} = n-\textrm{rank}(\Lambda_{i^*-1}) \geq \frac{n}{2}$ by minimality.
Applying Lemma~\ref{lem:AnalyzingTheCanonicalFiltration} with $t=2$ and index $i^*$ for both parts  (I) and (II) gives
\begin{eqnarray*}
  \frac{\mu(\Lambda_{i^*}, K_{\textrm{sym}} \cap \textrm{span}(\Lambda_{i^*}))}{R_W(\Lambda,K)} &\lesssim& \frac{\log(2n) \cdot r_{i^*} \cdot \ell_{K_{\textrm{sym}}}}{r_{i^*} \cdot (\frac{d_{\geq i^*}}{n})^6 \cdot \frac{d_{\geq i^*}}{\ell_{K_{\textrm{sym}}^{\circ}}}} \\ 
  &\lesssim& \log(2n)^2 \cdot \Big(\frac{n}{d_{\geq i^*}}\Big)^7 \\ &\lesssim& \log(2n)^2.
\end{eqnarray*}
   Hence $\Lambda' := \Lambda_{i^*}$ satisfies the claim.
\end{proof}

\subsection{Completing the main proof\label{sec:MainProof}}

Using Proposition~\ref{prop:MainArgument} we can finish the proof of our main theorem. 
\begin{proof}[Proof of Theorem~\ref{thm:KLConj}]
  Consider a full rank lattice $\Lambda \subseteq \setR^n$ and a convex body $K \subseteq \setR^n$. We will prove by induction over $n$ that 
  \[
  \mu(\Lambda,K) \leq C_0\log^3(2n) \cdot \mu_{KL}(\Lambda,K),
\]
where $C_0 \geq 1$ is the constant from Proposition~\ref{prop:MainArgument}. The
claim is true for $n=1$, hence assume $n \geq 2$ from now on.
The claim is invariant under translations of $K$, hence we may assume that $b(K) = \bm{0}$.
Let $\Lambda' \subseteq \Lambda$ be the primitive sublattice from Prop~\ref{prop:MainArgument} and
set $W := \textrm{span}(\Lambda')$. First suppose that $\dim(W)<n$. Then
  \begin{eqnarray*}
    \mu(\Lambda,K) &\stackrel{\textrm{Lem~\ref{lem:TriangleIneqGenCoveringRadius}}}{\leq}& \mu(\Lambda \cap W,K \cap W) + \mu(\Pi_{W^{\perp}}(\Lambda),\Pi_{W^{\perp}}(K)) \\
    &\stackrel{K \supseteq K_{\textrm{sym}}}{\leq}& \mu(\Lambda \cap W,K_{\textrm{sym}} \cap W) + \mu(\Pi_{W^{\perp}}(\Lambda),\Pi_{W^{\perp}}(K)) \\
                   &\stackrel{\substack{\textrm{Prop~\ref{prop:MainArgument}} \\ +\textrm{ind.}}}{\leq}& C_0\log^2(2n) \cdot \mu_{KL}(\Lambda,K) + C_0\log^3(2\underbrace{\dim(W^{\perp})}_{\leq n/2}) \cdot \underbrace{\mu_{KL}(\Pi_{W^{\perp}}(\Lambda),\Pi_{W^{\perp}}(K))}_{\leq \mu_{KL}(\Lambda,K)} \\
    &\stackrel{\textrm{Lem~\ref{lem:MonotonicityOfMuKL}}}{\leq}& C_0\underbrace{\log^2(2n) \cdot \Big(1+\log(n)\Big)}_{=\log^3(2n)} \cdot \mu_{KL}(\Lambda,K). \qedhere
  \end{eqnarray*}
Finally, if $W = \setR^n$, then the second summand is not needed and $\mu(\Lambda,K) \leq C_0\log^2(2n) \cdot \mu_{KL}(\Lambda,K)$ without the inductive step.
\end{proof}

We note that our induction causes $O(\log(2n))$ many re-centering and rescaling operations using the result of
Figiel, Tomczak-Jaegermann and Pisier (Theorem~\ref{thm:PisierRescaling}). This circumvents the issue that 
the covering radius might be dominated by a subspace of dimension $d$ with $d \ll n$, which may not affect the $\ell$-position of the body sufficiently.

\section{Finding the subspace $W$ in single-exponential time\label{sec:FindingSubspace}}

In this section, we prove Theorem~\ref{thm:FindingSubspaceIn2ToN} which means we need to handle the algorithmic aspects
in finding the subspace $W$ that certifies
the upper bound in our main result, Theorem~\ref{thm:KLConj}. From a complexity viewpoint, the key difficulty lies in finding a $t$-stable
filtration for a given lattice for some parameter $t$. In the proof of Prop~\ref{prop:MainArgument} we have used that
there exists always a $t$-stable filtration for $t = O(1)$. 
While no single-exponential time algorithm is known that could find such a filtration,
Dadush~\cite{Dadush-Finding-DenseLatticeSubspacesSTOC19} proved that a filtration satisfying a factor of $t(n) := \Theta(\log n)$ can indeed be computed in time $2^{O(n)}$.
\begin{theorem}[Theorem 6.4. in~\cite{Dadush-Finding-DenseLatticeSubspacesSTOC19}] \label{thm:StableFiltrationAlgorithm}
Given a lattice $\Lambda := \Lambda(B) \subseteq \setR^n$ one can compute a $O(\log (2n))$-stable filtration of $\Lambda$ in time $2^{O(n)}$ times a polynomial in the encoding length of $B$ with probability at least $1-2^{-\Omega(n)}$.
\end{theorem}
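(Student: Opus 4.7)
The plan is to turn the inductive proof of Theorem~\ref{thm:KLConj} into an explicit algorithm whose only super-polynomial step at each recursion level is the invocation of Theorem~\ref{thm:StableFiltrationAlgorithm}. The recursion mirrors the induction in Theorem~\ref{thm:KLConj}: at each level I split $\setR^n$ along $U = \mathrm{span}(\Lambda_{i^*})$ for an index $i^*$ arising from a stable filtration, extract a certifying subspace $W^{\mathrm{cur}}$ from the case analysis inside Proposition~\ref{prop:MainArgument} for the section $(\Lambda \cap U, K \cap U)$, and recurse on the projection $(\Pi_{U^\perp}(\Lambda), \Pi_{U^\perp}(K))$ of ambient dimension at most $n/2$.

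At each recursion level the algorithm (i) brings $K$ into the required position via Santal\'o translation (Lemma~\ref{lem:TranslateSoThatPolarIsCentered}) and Pisier rescaling of $K_{\mathrm{sym}}$ (Theorem~\ref{thm:PisierRescaling}), both of which admit polynomial-time approximation algorithms under suitable input conditioning; (ii) invokes Theorem~\ref{thm:StableFiltrationAlgorithm} to compute an $O(\log n)$-stable filtration in $2^{O(n)}$ time, then applies Theorem~\ref{thm:ApproxFilt} to refine it in polynomial time into a well-separated $O(\log n)$-stable filtration $\{\bm{0}\} = \Lambda_0 \subset \dots \subset \Lambda_k = \Lambda$; (iii) identifies the minimal $i^*$ with $\mathrm{rank}(\Lambda_{i^*}) \geq n/2$ and sets $U := \mathrm{span}(\Lambda_{i^*})$; (iv) reruns the argument of Proposition~\ref{prop:MainArgument} with stability parameter $t = O(\log n)$ in place of $t = 2$. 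The only effect of the weaker stability is to inflate Claim~I by an extra factor of $\log n$, propagating through Case~I and Case~II to $\mu(\Lambda \cap U, K_{\mathrm{sym}} \cap U) \lesssim \log^7(n) \cdot \mu_{KL}(\Lambda, K)$. Crucially, each of Case~I and Case~II identifies a specific witness $W^{\mathrm{cur}} := \mathrm{span}(\Lambda_{j-1})^\perp$ (with $j = k$ in Case~I and $j \in \{i^*,\dots,k\}$ satisfying $d_j \gtrsim n/\log n$ in Case~II), and inspecting the proof of Claim~III shows that this subspace certifies $\mu(\Lambda \cap U, K \cap U) \lesssim \log^7(n) \cdot (\det(\Pi_{W^{\mathrm{cur}}}(\Lambda))/\Vol_{d^{\mathrm{cur}}}(\Pi_{W^{\mathrm{cur}}}(K)))^{1/d^{\mathrm{cur}}}$, where $d^{\mathrm{cur}} := \dim(W^{\mathrm{cur}})$.

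The algorithm then recursively processes $(\Pi_{U^\perp}(\Lambda), \Pi_{U^\perp}(K))$ to obtain a witness subspace $W^{\mathrm{rec}} \subseteq U^\perp$ satisfying the inductive guarantee with factor $\log^8(n/2)$. By Lemma~\ref{lem:TriangleIneqGenCoveringRadius}, $\mu(\Lambda, K) \leq \mu(\Lambda \cap U, K \cap U) + \mu(\Pi_{U^\perp}(\Lambda), \Pi_{U^\perp}(K))$, so combining the two bounds shows that the better of $W^{\mathrm{cur}}$ and $W^{\mathrm{rec}}$ certifies a Kannan--Lov\'asz-type ratio at least $\mu(\Lambda, K) / (\log^7(n) + \log^8(n/2)) \gtrsim \mu(\Lambda, K)/\log^8(n)$. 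To choose between them the algorithm estimates both ratios using polynomial-time randomized volume approximation and returns the larger. The total running time satisfies $T(n) = 2^{O(n)} + T(n/2) = 2^{O(n)}$.

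The main obstacle I expect is the preprocessing: while the input hypothesis $B_2^n \subseteq K \subseteq (n+1)^{3/2} B_2^n$ suffices to run Santal\'o centering and Pisier rescaling in polynomial time at the top level, at each recursive call on $\Pi_{U^\perp}(K)$ I need to re-establish comparable well-conditioning so that these subroutines remain efficient; this requires applying the well-scaling procedure from Lemma~\ref{lem:WellScaleKorDecideKisThin} (together with a John-type ellipsoid approximation) to the projected body, and carrying along appropriate separation oracles. A secondary concern is verifying that the $\log$-factor arithmetic across the $O(\log n)$ recursion levels collapses cleanly to $\log^8(n)$; this works because the extra $\log n$ loss from the weaker stability appears exactly once per level, so the recurrence $C \log^7(n) + C \log^8(n/2) \leq C \log^8(n)$ closes inductively.
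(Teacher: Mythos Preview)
You are not proving the stated theorem. Theorem~\ref{thm:StableFiltrationAlgorithm} asserts that an $O(\log n)$-stable filtration of a lattice can be computed in $2^{O(n)}$ time; this is a result cited from Dadush's paper~\cite{Dadush-Finding-DenseLatticeSubspacesSTOC19} and is used in the present paper as a black box. Your proposal, by contrast, sketches an algorithm that \emph{invokes} Theorem~\ref{thm:StableFiltrationAlgorithm} as a subroutine in order to find a witness subspace $W$ for the Kannan--Lov\'asz bound --- that is, you are outlining a proof of Theorem~\ref{thm:SubspaceWellConditioned} (equivalently Theorem~\ref{thm:FindingSubspaceIn2ToN}), not of Theorem~\ref{thm:StableFiltrationAlgorithm}. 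Indeed, your very first sentence states that the only super-polynomial step is ``the invocation of Theorem~\ref{thm:StableFiltrationAlgorithm},'' which would be circular as a proof of that theorem.

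A genuine proof of Theorem~\ref{thm:StableFiltrationAlgorithm} requires the lattice-algorithmic machinery of~\cite{Dadush-Finding-DenseLatticeSubspacesSTOC19} for approximating densest sublattices, and nothing in your write-up addresses this. If your intent was to prove Theorem~\ref{thm:SubspaceWellConditioned}, then your sketch is in fact close to the paper's own argument for that result (the \textsc{Find-Subspace} algorithm in Section~\ref{sec:FindingSubspace}), but you should relabel accordingly.
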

We also use a classical result that approximately computes the volume of a convex body:
\begin{theorem}[\cite{Kannan1997RandomWA}] \label{thm:VolumeComputation}
Given a convex body $K \subseteq \setR^n$ with $(a+r\cdot B^n_2) \subseteq K \subseteq R \cdot B^n_2$ for some $a \in \setR^n$ and a parameter $\varepsilon>0$, there exists a randomized algorithm which outputs a positive number $\zeta$ with $\Vol_n(K)/\zeta \in [1-\varepsilon, 1 + \varepsilon]$. The runtime is polynomial in $n$, $1/\varepsilon$, $\log(1/r)$ and $\log(R)$. 
\end{theorem}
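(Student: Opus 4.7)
The plan is to use the Multi-Phase Monte Carlo (MPMC) framework pioneered by Dyer--Frieze--Kannan and streamlined by Lov\'asz--Simonovits and later works. The idea is to write
\[
 \Vol_n(K) \;=\; \Vol_n(K_0) \cdot \prod_{i=0}^{m-1} \frac{\Vol_n(K_{i+1})}{\Vol_n(K_i)},
\]
where $K_0 \subseteq K_1 \subseteq \cdots \subseteq K_m = K$ is a nested sequence in which $K_0$ is a body of known volume and each successive ratio is bounded by a small constant. A natural choice is $K_0 := r \cdot B_2^n$ and $K_i := K \cap \big(r \cdot 2^{i/n}\big) B_2^n$ for $i = 1,\ldots,m$ with $m = \lceil n \log_2(R/r) \rceil$, which yields $m = O(n \log(R/r))$ phases, each ratio at most $2$, and membership in $K_i$ reducible to a single call of the separation oracle for $K$ together with a norm comparison.

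First I would estimate each ratio $p_i := \Vol_n(K_i)/\Vol_n(K_{i+1})$ by drawing $N = O(m^2/\varepsilon^2)$ approximately uniform samples from $K_{i+1}$ and counting the fraction $\hat p_i$ that fall in $K_i$. A Chernoff bound then guarantees a multiplicative $(1 \pm \varepsilon/m)$ error per phase with probability at least $1 - 2^{-\Omega(n)}$, and a union bound plus a telescoping argument yields a $(1\pm \varepsilon)$ estimator $\zeta := \Vol_n(K_0) \cdot \prod_{i=0}^{m-1} \hat p_i^{-1}$. The computation of $\Vol_n(K_0) = r^n \cdot \Vol_n(B_2^n)$ is closed-form.

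The main work is to produce nearly uniform samples from each $K_{i+1}$ in polynomial time. I would use a geometric random walk such as the \emph{ball walk} (or hit-and-run): from a point $x$, propose $y$ uniformly in $x + \delta B_2^n$ for a small step $\delta = \Theta(r/\sqrt{n})$, accept if $y \in K_{i+1}$ and otherwise stay. This Markov chain is reversible with uniform stationary measure on $K_{i+1}$. Since $r B_2^n \subseteq K_{i+1}$ and $K_{i+1}$ is contained in a ball of radius $O(R)$, the body is well-rounded with condition number depending only on $R/r$. The mixing time of the ball walk is polynomial in $n$, $\log(R/r)$, and $1/\varepsilon$ by the conductance argument of Lov\'asz--Simonovits: this is the hard part and requires the isoperimetric inequality for log-concave measures together with the fact that, at scale $\delta$, one-step distributions from nearby interior points overlap in total variation by a constant.

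To keep the overall runtime polynomial I would use the standard \emph{warm start} trick: a sample approximately uniform on $K_i$ already has density at most $2$ with respect to the uniform measure on $K_{i+1}$ (because $p_i \geq 1/2$), so the ball walk on $K_{i+1}$ is initialized from a bounded-ratio start and avoids the exponential cost of a cold start. Chaining the phases therefore gives total cost polynomial in $n$, $1/\varepsilon$, $\log R$, and $\log(1/r)$, matching the claim. The principal obstacle is the quantitative mixing analysis of the walk on each $K_{i+1}$; once this conductance-and-isoperimetry bound is in place, assembling the telescoping product, propagating statistical errors, and verifying the warm-start invariant along the phases are essentially bookkeeping.
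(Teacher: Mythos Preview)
The paper does not prove this theorem; it is quoted as a black box from the cited reference, so there is no in-paper argument to compare against. Your outline is indeed the Multi-Phase Monte Carlo skeleton of Dyer--Frieze--Kannan and Kannan--Lov\'asz--Simonovits, so the high-level plan matches the literature.

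There is, however, a real gap in your runtime claim. You write that because $r B_2^n \subseteq K_{i+1} \subseteq O(R)\, B_2^n$ the ball walk mixes in time polynomial in $n$, $\log(R/r)$ and $1/\varepsilon$. The conductance/isoperimetry machinery does not give this: from a warm start, the ball walk (or hit-and-run) on a convex body with inradius $\rho$ and diameter $D$ mixes in time polynomial in $n$ and in the sandwiching ratio $D/\rho$ itself, \emph{not} in its logarithm. In your scheme $K_i$ has inradius $r$ and circumradius $r\cdot 2^{i/n}$, so the late phases have ratio essentially $R/r$, and summing the geometric series over the phases your total cost comes out polynomial in $R/r$ rather than in $\log(R/r)$. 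The missing ingredient --- and the reason the cited algorithm achieves logarithmic dependence --- is a preprocessing \emph{rounding} step: first use the ellipsoid method (Gr\"otschel--Lov\'asz--Schrijver) to compute, in time polynomial in $n$ and $\log(R/r)$, an affine map $T$ with $B_2^n \subseteq T(K) \subseteq n^{O(1)} B_2^n$; then run your MPMC scheme on $T(K)$ and output $|\det T|^{-1}$ times the resulting estimate. After rounding, every $K_i$ has condition number $n^{O(1)}$, the per-phase mixing time is polynomial in $n$ alone, and the $O(n\log n)$ phases contribute the only remaining dependence, which is indeed logarithmic in the original $R/r$. Without this step the argument does not reach the stated bound.
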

In fact,~\cite{Kannan1997RandomWA} also computes an approximation to the barycenter of $K$:

\begin{theorem}[\cite{Kannan1997RandomWA}] \label{thm:BarycenterComputation}
Given a convex body $K \subseteq \setR^n$ with $(a+r\cdot B^n_2) \subseteq K \subseteq R \cdot B^n_2$ for some $a \in \setR^n$ and a parameter $\delta>0$, there exists a randomized algorithm with running time polynomial in $n$, $\frac{1}{\delta}$, $\log(\frac{1}{r})$ and $\log(R)$, which returns an approximate barycenter $\tilde{x}$ such that $\|b(K) - \tilde{x}\|_{K-K} \le \delta$.
\end{theorem}
For both, Theorem~\ref{thm:VolumeComputation} and Theorem~\ref{thm:BarycenterComputation} we assume access to a separation oracle for $K$.
We note that much progress has been made since the seminal work of Kannan, Lov{\'a}sz and Simonovits~\cite{Kannan1997RandomWA} in order 
to reduce the polynomial running times behind Theorems~\ref{thm:VolumeComputation} and \ref{thm:BarycenterComputation}. As these are lower order terms compared to the exponential running time of our algorithm, we do not elaborate on those further.

We also restate and justify the bound on the volume of projections whose proof we had deferred earlier:
\begin{proposition*}[Proposition~\ref{prop:VolumeOfProjectionVsSymmetrizer} --- Volume of projections I] \label{prop:VolumeOfProjectionVsSymmetrizer}
Let $K \subseteq \setR^n$ be a convex body so that $b(K) = \bm{0}$ and let $F \subseteq \setR^n$ be a $d$-dimensional subspace. 
Then 
\[
 \Vol_d(\Pi_F(K))^{1/d} \lesssim \Big(\frac{n}{d}\Big)^5 \cdot \log\Big(\frac{en}{d}\Big)^2  \cdot \Vol_d(\Pi_F(K_{\textrm{sym}}))^{1/d}
\]
  Moreover, this remains true if instead of $b(K) = \bm{0}$ one has $\|b(K)\|_{K-K} \leq \frac{1}{2(n+1)}$.
\end{proposition*}
\begin{proof}
  The statement without the ``moreover'' part can be found in \cite[Corollary 11]{vritsiou2023regular}.
  It remains to justify that the argument holds if the barycenter is close to $\bm{0}$.
First we recall the following fact which can be found e.g. in \cite[Page 2]{MR4832117}: \\
{\bf Fact.} \emph{For any convex body $S \subseteq \setR^n$ with barycenter at the origin one has $-S \subseteq nS$.} \\
For the actual symmetrizer  $P := (K-b(K)) \cap (-K+b(K))$, we can apply the first part and get
\[ \Vol_d(\Pi_F(K))^{1/d} \lesssim \Big(\frac{n}{d}\Big)^5 \cdot \log\Big(\frac{en}{d}\Big)^2  \cdot  \Vol_d(\Pi_F(P))^{1/d}.\]
Using the fact cited above one has $-2(n+1) b(K) \in K-K \subseteq (n+1) \cdot (K-b(K))$ which can be rearranged to $-b(K) \in K$.
Then $K - b(K) \subseteq K + K = 2K$, so that $P \subseteq 2 K_{\mathrm{sym}}$ and $\Vol_d(\Pi_F(P))^{1/d} \le 2 \cdot \Vol_d(\Pi_F(K_{\mathrm{sym}}))^{1/d}$.
\end{proof}

Next, we prove an algorithmic version of Prop~\ref{prop:MainArgument}.
\begin{proposition} \label{prop:ConstructiveExistenceOfSubspaceLambdaPrime}
  Given a full rank lattice $\Lambda := \Lambda(B) \subseteq \setR^n$ and a convex body $K \subseteq \setR^n$ with $(a+r\cdot B^n_2) \subseteq K \subseteq R \cdot B^n_2$ for some $a \in \setR^n$,  one can compute a primitive sublattice $\Lambda' \subseteq \Lambda$ with  $\textrm{rank}(\Lambda') \geq n/2$, a point $\tilde{x} \in \setR^n$ and a subspace $W \subseteq \setR^n$ with $\dim(W) \geq n/2$ so that
  \begin{equation} \label{eq:ConstructiveGuaranteeOfPartialSubspace0}
   \mu\big(\Lambda', (K - \tilde{x}) \cap (-(K - \tilde{x})) \cap \textrm{span}(\Lambda')\big) \leq C_0t(n)\log^2(2n) \cdot R_{W}(\Lambda,K)
 \end{equation}
  The running time is bounded by a polynomial in $\log(\frac{1}{r})$, $\log(R)$ and the encoding length of $B$,
  assuming access to a separation oracle for $K$ and an oracle to find $t(n)$-stable filtrations.
\end{proposition}

\begin{proof}
To obtain $\Lambda',\tilde{x}$ and $W$, we use the following algorithm which mimics the proof of Prop~\ref{prop:MainArgument}: 
\begin{center}
  \psframebox{\begin{minipage}{14cm}
{\sc Find-Sublattice-for-partial-bound} \\
{\bf Input:} Full rank lattice $\Lambda := \Lambda(B) \subseteq \setR^n$ and a convex body $K \subseteq \setR^n$  \\
{\bf Output:} Sublattice $\Lambda' \subseteq \Lambda$, point $\tilde{x}$ and subspace $W$ satisfying Prop~\ref{prop:ConstructiveExistenceOfSubspaceLambdaPrime}
\begin{enumerate*}
\item[(1)] Compute an approximate barycenter $\tilde{x}$ such that $\|b(K) - \tilde{x}\|_{K-K} \le \frac{1}{2(n+1)}$
\item[(2)] Set $K_{\mathrm{sym}} := (K-\tilde{x}) \cap (-(K-\tilde{x}))$ and compute an invertible linear map $T$ so that
  \[\ell_{T(K_{\mathrm{sym}})} \cdot \ell_{(T (K_{\mathrm{sym}}))^{\circ}} \leq C \cdot n \log (2n) \]
\item[(3)] Set $\bar{K} \gets T(K-\tilde{x})$ and $\bar{\Lambda} \gets T(\Lambda)$
\item[(4)] Compute a $t(n)$-stable filtration  $\{\bm{0}\} = \bar{\Lambda}_0 \subset \dots \subset
      \bar{\Lambda}_k = \bar{\Lambda}$
\item[(5)] Compute a well-separated $2t(n)$-stable filtration  $\{\bm{0}\} = \bar{\Lambda}_0' \subset \dots \subset
      \bar{\Lambda}'_{k'} = \bar{\Lambda}$
    \item[(6)] Set $i^*$ as the minimal index with $\rank(\bar{\Lambda}'_{i^*}) \ge \frac{n}{2}$
\item[(7)] Return $\Lambda' := T^{-1}(\bar{\Lambda}'_{i^*})$, $\tilde{x}$ and $W := \textrm{span}(\Lambda'')^{\perp}$ where $\Lambda'' := T^{-1}(\bar{\Lambda}'_{i^*-1})$
\end{enumerate*}
\end{minipage}}
\end{center}
The choice of $i^*$ gives that  $\textrm{rank}(\bar{\Lambda}'_{i^*-1}) \leq n/2 \leq \textrm{rank}(\bar{\Lambda}'_{i^*})$ which is the same as $\textrm{rank}(\Lambda'') \leq n/2 \leq \textrm{rank}(\Lambda')$ because $T$ is invertible. Then $\dim(W) = n - \textrm{rank}(\Lambda'') \geq n/2$.

Next, we justify the running time. 
The approximate barycenter $\tilde{x}$ in (1) can be computed via Theorem~\ref{thm:BarycenterComputation}. 
The linear map $T$ bringing the body in $\ell$-position in (2) corresponds to solving a convex program
which can be done in randomized polynomial time; for details see e.g. \cite{10.5555/2095116.2095230}. In (4)
we use the assumed oracle to find a $t(n)$-stable filtration which can be refined into a well-separated filtration by Theorem~\ref{thm:ApproxFilt}; the latter argument works in polynomial time as the involved determinants can be computed via Gaussian elimination.

Finally, we prove the main statement. We observe that Eq~\eqref{eq:ConstructiveGuaranteeOfPartialSubspace0} is invariant under an invertible linear transformation and hence we may instead prove that 
\begin{equation} \label{eq:ConstructiveGuaranteeOfPartialSubspace}
   \mu\big(\bar{\Lambda}_{i^*}', T(K_{\textrm{sym}}) \cap \textrm{span}(\bar{\Lambda}_{i^*}')\big) \leq C_0t(n)\log^2(2n) \cdot R_{\textrm{span}(\bar{\Lambda}_{i^*-1}')^{\perp}}(\bar{\Lambda},\bar{K}).
 \end{equation}
 The difference to the earlier proof of Prop~\ref{prop:MainArgument} lies in the use of a well-separated $2t(n)$-stable filtration and the use of an approximate barycenter. First, replacing the factor of ``2'' by ``$2t(n)$'' and
 applying Lemma~\ref{lem:AnalyzingTheCanonicalFiltration}.(I) gives
\begin{equation} \label{eq:ConstructiveGuaranteeOfPartialSubspace2}
 \mu(\bar{\Lambda}_{i^*}', T(K_{\textrm{sym}}) \cap \textrm{span}(\bar{\Lambda}_{i^*}')) \lesssim \log(2n) \cdot t(n) \cdot \ell_{T(K_{\textrm{sym}})} \cdot r_{i^*}.
\end{equation}
where $r_{i^*} := \textrm{nd}(\bar{\Lambda}_{i^*}' / \bar{\Lambda}_{i^*-1}')$.
Similarly, setting $d := \dim(\textrm{span}(\bar{\Lambda}_{i^*-1}')^{\perp}) \geq \frac{n}{2}$ and applying Lemma~\ref{lem:AnalyzingTheCanonicalFiltration}.(II) 
with the fact that  $b(\bar{K})=T(b(K) - \tilde{x})$ and so $\|b(\bar{K})\|_{\bar{K}-\bar{K}} \leq \frac{1}{2(n+1)}$ gives
\begin{equation} \label{eq:ConstructiveGuaranteeOfPartialSubspace3}
 R_{\textrm{span}(\bar{\Lambda}_{i^*-1}')^{\perp}}(\bar{\Lambda},\bar{K}) \gtrsim \frac{r_{i^*}}{\log(2n)} \cdot \Big(\frac{d}{n}\Big)^7 \cdot \ell_{T(K_{\textrm{sym}})}
\end{equation}
Combining Eq~\eqref{eq:ConstructiveGuaranteeOfPartialSubspace2} and Eq~\eqref{eq:ConstructiveGuaranteeOfPartialSubspace3} will then provide Eq~\eqref{eq:ConstructiveGuaranteeOfPartialSubspace}.
\end{proof}

In the following, we will also need to control the encoding length of sublattices. Fortunately,
in order to do so it suffices to control their determinant. 
\begin{lemma} \label{lem:EncodingLengthOfSublatticesInTermsOfDet}
  Let $\Lambda := \Lambda(B) \subseteq \setQ^n$ be a full rank lattice and let $\Lambda' := \Lambda(B') \subseteq \Lambda$ be any sublattice with $d := \textrm{rank}(\Lambda')$.
  Then there is a basis $B^* \in \setQ^{n \times d}$ of $\Lambda'$ that has encoding length polynomial in $n$, $\log(\det(\Lambda'))$ and the encoding length of $B$.
  Such a basis can be found in time polynomial in $n$ and the encoding length of $B'$.
\end{lemma}
\begin{proof}
  After scaling  $\Lambda$ and $\Lambda'$ with the smallest common multiple of the
  denominators in $B$ (whose encoding length is polynomial in $n$ and the encoding length of $B$) we may assume that $B \in \setZ^{n \times n}$.
  We use the LLL-algorithm from Theorem~\ref{thm:LLLalgorithm} to compute an \emph{LLL-reduced basis} $B^* = (b_1,\dots,b_d) \in \setQ^{n \times d}$ of $\Lambda'$ where the running time is polynomial in $n$ and the encoding length of $B'$. From Theorem~\ref{thm:LLLalgorithm}.(iii) we know that such a basis has bounded \emph{orthogonality defect}, which means that
  \[
     \det(\Lambda') \leq \prod_{i=1}^d \|b_i\|_2 \leq 2^{d^2/4} \cdot \det(\Lambda')
  \]
  Since $b_i \in \Lambda' \subseteq \Lambda \subseteq \setZ^n$ we know that $\|b_i\|_2 \geq 1$ for all $i=1,\dots,d$. This in turn implies an upper bound of $\|b_i\|_2 \leq 2^{d^2/4} \det(\Lambda')$ for all $i$. The claim follows.
\end{proof}

Now we can state a constructive generalization of Theorem~\ref{thm:FindingSubspaceIn2ToN} with a general parameter $t(n)$.
Then using the $2^{O(n)}$-time algorithm of Dadush to achieve $t(n) = \Theta(\log n)$ implies Theorem~\ref{thm:FindingSubspaceIn2ToN}.
We would like to point out that the remainder of the presented algorithm will run in polynomial time.

\begin{theorem} \label{thm:ConstructiveFindingFlatSubspace}
Given a full rank lattice $\Lambda := \Lambda(B) \subseteq \setR^n$ and a convex body $K \subseteq \setR^n$ with $(a+r\cdot B^n_2) \subseteq K \subseteq R \cdot B^n_2$ for some $a \in \setR^n$, there is a randomized algorithm to compute a subspace $W \subseteq \setR^n$ so that
\begin{equation} \label{eq:ConstructiveFindingFlatSubspace}
 \mu(\Lambda,K) \leq Ct(n)\log^3(2n) \cdot R_W(\Lambda,K) 
\end{equation}
  The running time is bounded by a polynomial in $n$, $\log(\frac{1}{r})$, $\log(R)$ and the encoding length of $B$
  assuming access to a separation oracle for $K$ and an oracle to find $t(n)$-stable filtrations where $t(n)$ is non-decreasing
  with $t(n) \leq 2^{\textrm{poly}(n)}$.
\end{theorem}
\begin{proof}
The proof presented for Theorem~\ref{thm:KLConj} can easily be interpreted as a recursive algorithm.
The main technical difficulty here is to prove that the encoding length of the involved lattices does
not blow up. In order to prove this, it will be useful to unravel the recursion and phrase the argument
as an iterative method: 
\begin{center}
\psframebox{\begin{minipage}{14cm}
\textsc{Find-Sublattice} \\
{\bf Input:} A full rank lattice $\Lambda := \Lambda(B) \subseteq \setR^n$ and a convex body $K \subseteq \setR^n$ given by a separation oracle. \\
{\bf Output:} Subspace $W \subseteq \setR^n$ satisfying Theorem~\ref{thm:ConstructiveFindingFlatSubspace}
\begin{enumerate*}
\item[(1)] Set $\Lambda_0 := \{ \bm{0}\}$ and $i := 0$
\item[(2)] FOR $i=0$ TO $\infty$ DO 
  \begin{enumerate*}
  \item[(3)] IF $\Lambda_{i}=\Lambda$ THEN set $k := i$ and exit the FOR loop. 
  \item[(4)] Call $\textsc{Find-Sublattice-For-Partial-Bound}(\Lambda / \Lambda_i,  \Pi_{\textrm{span}(\Lambda_i)^{\perp}}(K))$ to find a sublattice $\Lambda_{i}^{\textrm{rec}} \subseteq \Lambda / \Lambda_i$, a point $\tilde{x}_i \in \textrm{span}(\Lambda / \Lambda_i)$ and a subspace $W_i \subseteq \textrm{span}(\Lambda / \Lambda_i)$ with $\dim(W_i) \geq \frac{1}{2} \rank(\Lambda / \Lambda_i)$.
  \item[(5)] Let $\Lambda_{i+1} \subseteq \Lambda$ be the primitive lattice with $\Lambda / \Lambda_{i+1} = (\Lambda / \Lambda_i) / \Lambda_i^{\textrm{rec}}$. 
  \end{enumerate*}
\item[(6)] Return the subspace $W \in \{ W_0,\dots,W_{k-1}\}$ maximizing $R_{W}(\Lambda,K)$.
\end{enumerate*}
\end{minipage}}
\end{center}
Clearly $\Lambda_{i+1} \supseteq \Lambda_i$ for all $i$ and hence the algorithm produces a chain $\{ \bm{0}\} = \Lambda_0 \subseteq  \dots \subseteq \Lambda_k = \Lambda$. Moreover, $\Lambda_i^{\textrm{rec}} = \Lambda_{i+1} / \Lambda_i$.
Since $\textrm{rank}(\Lambda_i^{\textrm{rec}}) \geq \frac{1}{2}\textrm{rank}(\Lambda / \Lambda_i)$, we know that the algorithm terminates after $k \leq O(\log n)$
iterations.

Our goal is to prove that encoding length of all the sublattices $\Lambda_i$ can be bounded, which
by Lemma~\ref{lem:EncodingLengthOfSublatticesInTermsOfDet} reduces to bounding $\det(\Lambda_i)$.
We set a parameter $U := C_0t(n) \log^2(2n) \cdot \mu_{KL}(\Lambda,K)$ where $C_0$ is the constant from Prop~\ref{prop:ConstructiveExistenceOfSubspaceLambdaPrime}.
We note that by assumption, $\log(U)$ is bounded by a polynomial in $n$, $\log(r)$ and the encoding length of $B$.
Abbreviating $K_i := \Pi_{\textrm{span}(\Lambda_i)^{\perp}}(K) - \tilde{x}_i$ and $d_i := \textrm{rank}(\Lambda_{i+1}/\Lambda_i)$ for $i \in \{ 0,\dots,k-1\}$, we can estimate that
\begin{eqnarray}
   \Big(\frac{\det(\Lambda_{i+1})}{\det(\Lambda_i)}\Big)^{1/d_i} \cdot \frac{1}{2R} 
  &\leq& \Big(\frac{\det(\Lambda_{i+1}/\Lambda_i)}{\Vol_{d_i}(R B_2^{d_i})}\Big)^{1/d_i} \label{eq:ConsW2}\\ 
  &\leq& \mu(\Lambda_{i+1} / \Lambda_i,R B_2^{d_i}) \label{eq:ConsW3} \\
  &\leq& \mu(\Lambda_{i+1} / \Lambda_i,K_i \cap (-K_i) \cap \textrm{span}(\Lambda_{i+1} / \Lambda_i))  \label{eq:ConsW4}\\
  &\leq& C_0t(n) \log^2(2n) \cdot R_{W_i}(\Lambda/\Lambda_i,\Pi_{\textrm{span}(\Lambda_i)^{\perp}}(K))  \label{eq:ConsW5} \\
  &=& C_0t(n) \log^2(2n) \cdot R_{W_i}(\Lambda,K)  \label{eq:ConsW6} \\
  &\leq& U \label{eq:ConsW7} 
\end{eqnarray}
Here we use $\Vol_{d_i}(RB_2^{d_i}) \leq (2R)^{d_i}$ and $\det(\Lambda_{i+1}) = \det(\Lambda_i) \cdot \det(\Lambda_{i+1}/\Lambda_i)$ in \eqref{eq:ConsW2}.
We apply the basic determinant / volume lower bound in \eqref{eq:ConsW3}. In \eqref{eq:ConsW4} we use that $K \subseteq R B_2^n$ and projections and intersections cannot increase the radius of a set. In \eqref{eq:ConsW5} we use the
guarantee provided by Prop~\ref{prop:ConstructiveExistenceOfSubspaceLambdaPrime} and in \eqref{eq:ConsW7} we apply the definition of $U$.
Rearranging \eqref{eq:ConsW2}-\eqref{eq:ConsW7} gives the inequality
\begin{equation} \label{eq:ConsWDetB}
 \frac{\det(\Lambda_{i+1})}{\det(\Lambda_i)} \leq (2RU)^{d_i}
\end{equation}
Multiplying \eqref{eq:ConsWDetB} for indices $0,\dots,i-1$ gives
\[
\det(\Lambda_i) =\prod_{j=0}^{i-1} \frac{\det(\Lambda_{j+1})}{\det(\Lambda_j)} \leq \prod_{j=0}^{i-1} (2R U)^{d_j} \leq (2R U)^n
\]
as $\sum_{j=0}^{i-1} d_j \leq n$. By Lemma~\ref{lem:EncodingLengthOfSublatticesInTermsOfDet} we then conclude that $\Lambda_i$ has a basis whose encoding length is bounded by a polynomial in $n$,  $\log(R)$, $\log(U)$ and the encoding length of $B$. Also the quotient lattice $\Lambda / \Lambda_i$ admits a basis whose encoding length is a (fixed) polynomial in the encoding length of the bases of $\Lambda$ and $\Lambda_i$.
Together with the running time bound provided by Prop~\ref{prop:ConstructiveExistenceOfSubspaceLambdaPrime}, this concludes the runtime analysis.

As for the approximation guarantee claimed in \eqref{eq:ConstructiveFindingFlatSubspace} one could either be convinced that the algorithm satisfies the same recursive bound obtained in Theorem~\ref{thm:KLConj}
with an additional $t(n)$ factor, or one could directly verify that via the triangle inequality one has
\begin{eqnarray*}
  \mu(\Lambda,K) &\stackrel{\textrm{Lem~\ref{lem:TriangleIneqForFiltration}}}{\leq}& \sum_{i=0}^{k-1} \mu(\Lambda_{i+1} / \Lambda_i,K_i \cap (-K_i) \cap \textrm{span}(\Lambda_{i+1} / \Lambda_i)) \\
  &\stackrel{\eqref{eq:ConsW5}+\eqref{eq:ConsW6}}{\leq}& C_0t(n) \log^2(2n) \sum_{i=0}^{k-1} R_{W_i}(\Lambda,K) \\
  &\leq& C_0t(n) \log^2(2n) \cdot k \cdot R_W(\Lambda,K)
\end{eqnarray*}
as in step (6) we choose $W$ as the subspace providing the best lower bound. 
\end{proof}

It might be worth pausing at this point and noting that the sequence of subspaces $W_0,\dots,W_{k-1}$ that replace the inductive proof of Theorem~\ref{thm:KLConj} might be more amenable to improve the factor of $O(\log^3 (2n))$ in our main result in the future. This might also be the time to make the origin of the other two logarithmic factors explicit. 
First, let $C_{\textrm{cov}}(n)$ be the infimum over $\alpha > 0$ so that for all $m \leq n$ and all 2-stable full rank lattices $\Lambda \subseteq \setR^m$ and any symmetric convex body $Q \subseteq \setR^m$ one has
$\mu(\Lambda,Q) \leq \alpha \cdot \ell_Q$.
Next, let $C_{\textrm{pos}}(n)$ be the infimum over all $\beta > 0$ so that for all $m \leq n$ and all symmetric convex bodies $Q \subseteq \setR^m$, there is a linear map $T$ with $\ell_{T(Q)} \cdot \ell_{T(Q)^{\circ}} \leq \beta \cdot m$. 
\begin{corollary} \label{cor:KLtypeInequWithSequenceOfSubspaces}
  For any full rank lattice $\Lambda \subseteq \setR^n$ and any convex body $K \subseteq \setR^n$ there is a sequence $W_0,\dots,W_{k-1} \subseteq \setR^n$ of subspaces so that
  \[
   \mu(\Lambda,K) \lesssim C_{\textrm{cov}}(n) \cdot C_{\textrm{pos}}(n) \cdot \sum_{i=0}^{k-1} R_{W_i}(\Lambda,K)
 \]
 where $C_{\textrm{cov}}(n) \leq O(\log(2n))$ and $C_{\textrm{pos}}(n) \leq O(\log(2n))$.
 Moreover, $\dim(W_i) \leq \frac{n}{2^i}$ for $i=0,\dots,k-1$ and $\dim(W_{i+2}) \leq \frac{1}{2} \cdot \dim(W_i)$
 for $i=0,\dots,k-3$.
\end{corollary}
\begin{proof}
  Revisiting Section~\ref{sec:AnalyzingCanonicalFiltration} we can see that the factor of $\log(2n)$ in Lemma~\ref{lem:AnalyzingTheCanonicalFiltration}.(I) can be replaced by $C_{\textrm{cov}}(n)$ and in turn the factor of $\log^2(2n)$
  in Proposition~\ref{prop:MainArgument} can be replaced by $C_{\textrm{cov}}(n) \cdot C_{\textrm{pos}}(n)$.
  We recall that $C_{\textrm{cov}}(n) \leq O(\log(2n))$ by Prop~\ref{prop:CovRadiusOf2StableLattice}
  and $C_{\textrm{pos}}(n) \leq O(\log(2n))$ by Theorem~\ref{thm:PisierRescaling}.
  Next, revisit the proof of Theorem~\ref{thm:ConstructiveFindingFlatSubspace} for $t=1$. The argument produces
  a chain $\{ \bm{0} \} \subseteq \Lambda_0 \subseteq \dots \subseteq \Lambda_k$ of sublattices so that for $n_i := \rank(\Lambda / \Lambda_i)$ one has that
  $n_{i+1} \leq \frac{n_i}{2}$ for $i=0,\dots,k-1$. Moreover, the argument finds subspaces $W_i \subseteq \textrm{span}(\Lambda / \Lambda_i)$ so that
 \[
   \mu(\Lambda,K) \lesssim C_{\textrm{cov}}(n) \cdot C_{\textrm{pos}}(n) \cdot \sum_{i=0}^{k-1} R_{W_i}(\Lambda,K)
 \]
 while $\frac{n_i}{2} \leq \dim(W_i) \leq n_i$.
  One can see by induction that $\dim(W_i) \leq n_i \leq \frac{n}{2^i}$ for $i=0,\dots,k-1$.
  Moreover $\dim(W_{i+2}) \leq n_{i+2} \leq \frac{n_{i}}{4} \leq \frac{1}{2} \dim(W_{i})$ for $i=0,\dots,k-3$
  as claimed.
\end{proof}

\section{Integer programming in time $(\log(2n))^{O(n)}$\label{sec:IP}}

Next, we show that integer programming can be solved in time $(\log(2n))^{O(n)}$. In fact,
this is a known consequence of Theorem~\ref{thm:FindingSubspaceIn2ToN}.
We do not claim any original contribution for this section,
but we reproduce the arguments of Dadush~\cite[Chapter 7]{DadushThesis2012} to be self-contained.

First, we describe the intuition behind Dadush's algorithm. Consider a convex body $K \subseteq \setR^n$
and a lattice $\Lambda \subseteq \setR^n$; the goal is to find a point in $K \cap \Lambda$.
We compute a subspace $W \subseteq \setR^n$ in time $2^{O(n)}$ that certifies the covering radius $\mu(\Lambda,K)$ up to
a factor $\rho(n) := \Theta(\log^{4} (2n))$. 
Consider the points  $X := \Pi_W(K) \cap \Pi_W(\Lambda)$ in the projection on $W$.
For each $x \in K \cap \Lambda$, we also have $\Pi_W(x) \in X$. Note that the reverse may not be true in the
sense that it is entirely possible that $K \cap \Lambda = \emptyset$ while $X \neq \emptyset$.
However, we are guaranteed that all lattice points
in $K$ must be in one of the $(n-d)$-dimensional fibers of the projection, i.e.
\[
 K \cap \Lambda \subseteq \bigcup_{y \in X} \big((K \cap \Pi_W^{-1}(y)) \cap \Lambda\big).
\]
\begin{center}
  \psset{unit=1.3cm}
  \begin{pspicture*}(-4.5,-2.5)(4.5,2.5)
    \pspolygon[fillstyle=solid,fillcolor=lightgray](-0.8,1)(-0.8,1.5)(1,2.2)(2.1,1.7)(1,1.2)\rput[c](0.5,1.5){$K$}%
    \psline[linecolor=blue!50!white,linewidth=2pt](5,2.5)(-5,-2.5)\pnode(3.5,1.75){A}\nput{-45}{A}{$\blue{W}$}%
    \cnode*(0,0){2.5pt}{origin}\nput[labelsep=2pt]{-45}{origin}{$\bm{0}$}%
    \psline[linecolor=darkgray,linewidth=3pt](2.4,1.2)(-0.24,-0.12)%
    \multido{\N=0+1}{4}{\rput[c](\N,0){\psline[linestyle=dotted,linecolor=gray](3,-6)(-3,6)}}%
    \multido{\N=0+1}{3}{\rput[c](\N,1){\psline[linestyle=dotted,linecolor=gray](3,-6)(-3,6)}}%
    \psplot[algebraic=true,linewidth=2pt,linecolor=darkgray]{-0.75}{-0.52}{-2*x}
    \psplot[algebraic=true,linewidth=2pt,linecolor=darkgray]{-0.35}{-0.05}{-2*(x-0.5)}
    \psplot[algebraic=true,linewidth=2pt,linecolor=darkgray]{0.08}{0.43}{-2*(x-1.0)}
    \psplot[algebraic=true,linewidth=2pt,linecolor=darkgray]{0.5}{0.9}{-2*(x-1.5)}
    \psplot[algebraic=true,linewidth=2pt,linecolor=darkgray]{0.92}{1.32}{-2*(x-2.0)}
    \psplot[algebraic=true,linewidth=2pt,linecolor=darkgray]{1.52}{1.73}{-2*(x-2.5)}

    \pspolygon[fillstyle=none,fillcolor=lightgray](-0.8,1)(-0.8,1.5)(1,2.2)(2.1,1.7)(1,1.2)
    \multido{\n=-2+1}{5}{\multido{\N=-4+1}{9}{\psdots(\N,\n)}}%
    \multido{\N=0+0.4,\n=0+0.2}{7}{\psdots[linecolor=blue,linewidth=1.5pt](\N,\n)}
    \pnode(0,0){A}\pnode(-0.5,-1.0){B} \ncline[linecolor=blue,nodesepB=3pt]{->}{B}{A} \nput[labelsep=2pt]{-90}{B}{$\blue{X}$}
    \pnode(1.0,0.5){A}\pnode(2,-0.5){B} \ncline[linecolor=black,nodesepB=3pt]{->}{B}{A} \nput[labelsep=2pt]{0}{B}{$\Pi_{W}(K)$}
\end{pspicture*}
\end{center}
The algorithm enumerates $X$ and then recurses on all the fibers. In order for this algorithm to be
efficient we need to (i) bound the cardinality $|X|$ and (ii) be able to enumerate $X$. For 
(ii), note that it is possible that $W = \setR^n$ and hence we would not gain anything by treating $\Pi_W(K) \cap \Pi_W(\Lambda)$ as a general integer programming problem.

First, we need the insight of Dadush, Peikert and Vempala that enumerating lattice points in an
ellipsoid can be done in time $2^{O(n)}$ per point:
\begin{theorem}[\cite{EnumerateLatticeAlgosDadushPeikertVempalaFOCS11}] \label{thm:EnumerateLatticePointsInBall}
Given a full rank lattice $\Lambda = \Lambda(B)$, an ellipsoid $E$ and a translate $t \in \setQ^n$,
one can compute the set $S := (t + E) \cap \Lambda$ in time $(|S| + 1) \cdot 2^{O(n)}$ times a polynomial
in the encoding length of $B$, $E$ and $t$.
\end{theorem}
\begin{proof}[Proof sketch]
After applying
a linear transformation, it suffices to compute all lattice points in a Euclidean ball, i.e.
we may assume that $S = (t+B_2^n) \cap \Lambda$. Next, consider the \emph{Voronoi cell}
\[
\pazocal{V} = \left\{ x \in \setR^n : \|x\|_2 \leq \|x - v\|_2 \; \forall v \in \Lambda \setminus \{\bm{0}\} \right\}
\]
of the lattice, which are all points that are no farther from the origin than to any
other lattice point. Note that $\pazocal{V}$ is a symmetric convex body; in fact it is a polytope.
Let $R \subseteq \Lambda \setminus \{ \bm{0}\}$ be the \emph{Voronoi-relevant} vectors, which are all the
vectors that define a facet of $\pazocal{V}$. One can prove that  $|R| \leq 2^{n+1}$
and moreover the set $R$ can be computed in time $2^{O(n)}$ by the algorithm of Micciancio and Voulgaris~\cite{CVP-Voronoi-Algo-MicciancioVoulgaris-SICOMP2013}.
Next, consider the graph $H = (\Lambda,F)$ with edges $F = \{ \{ x,y\} : x,y \in \Lambda\textrm{ and }x-y \in R\}$.
\begin{figure}
\begin{center}
\psset{unit=0.8cm}
\begin{pspicture*}(-4,-2.7)(4,3.4)
 \rput[c](5,-2){\pspolygon[fillstyle=solid, fillcolor=lightgray](0.166,1.166)(0.825,0.825)(1.166,0.166)(-0.166,-1.166)(-0.825,-0.825)(-1.16,-0.166)}%
 \rput[c](3,-3){\pspolygon[fillstyle=solid, fillcolor=lightgray](0.166,1.166)(0.825,0.825)(1.166,0.166)(-0.166,-1.166)(-0.825,-0.825)(-1.16,-0.166)}%
 \rput[c](4,-1){\pspolygon[fillstyle=solid, fillcolor=lightgray](0.166,1.166)(0.825,0.825)(1.166,0.166)(-0.166,-1.166)(-0.825,-0.825)(-1.16,-0.166)}%
 \rput[c](2,-2){\pspolygon[fillstyle=solid, fillcolor=lightgray](0.166,1.166)(0.825,0.825)(1.166,0.166)(-0.166,-1.166)(-0.825,-0.825)(-1.16,-0.166)}%
 \rput[c](0,-3){\pspolygon[fillstyle=solid, fillcolor=lightgray](0.166,1.166)(0.825,0.825)(1.166,0.166)(-0.166,-1.166)(-0.825,-0.825)(-1.16,-0.166)}%
 \rput[c](3,3){\pspolygon[fillstyle=solid, fillcolor=lightgray](0.166,1.166)(0.825,0.825)(1.166,0.166)(-0.166,-1.166)(-0.825,-0.825)(-1.16,-0.166)}%
 \rput[c](4,2){\pspolygon[fillstyle=solid, fillcolor=lightgray](0.166,1.166)(0.825,0.825)(1.166,0.166)(-0.166,-1.166)(-0.825,-0.825)(-1.16,-0.166)}%
 \rput[c](5,1){\pspolygon[fillstyle=solid, fillcolor=lightgray](0.166,1.166)(0.825,0.825)(1.166,0.166)(-0.166,-1.166)(-0.825,-0.825)(-1.16,-0.166)}%
 \rput[c](-3,-3){\pspolygon[fillstyle=solid, fillcolor=lightgray](0.166,1.166)(0.825,0.825)(1.166,0.166)(-0.166,-1.166)(-0.825,-0.825)(-1.16,-0.166)}%
 \rput[c](-5,-1){\pspolygon[fillstyle=solid, fillcolor=lightgray](0.166,1.166)(0.825,0.825)(1.166,0.166)(-0.166,-1.166)(-0.825,-0.825)(-1.16,-0.166)}%
 \rput[c](-4,-2){\pspolygon[fillstyle=solid, fillcolor=lightgray](0.166,1.166)(0.825,0.825)(1.166,0.166)(-0.166,-1.166)(-0.825,-0.825)(-1.16,-0.166)}%
 \rput[c](0,3){\pspolygon[fillstyle=solid, fillcolor=lightgray](0.166,1.166)(0.825,0.825)(1.166,0.166)(-0.166,-1.166)(-0.825,-0.825)(-1.16,-0.166)}%
 \rput[c](-5,2){\pspolygon[fillstyle=solid, fillcolor=lightgray](0.166,1.166)(0.825,0.825)(1.166,0.166)(-0.166,-1.166)(-0.825,-0.825)(-1.16,-0.166)}%
   \rput[c](-3,3){\pspolygon[fillstyle=solid, fillcolor=lightgray](0.166,1.166)(0.825,0.825)(1.166,0.166)(-0.166,-1.166)(-0.825,-0.825)(-1.16,-0.166)}%
  \rput[c](-4,1){\pspolygon[fillstyle=solid, fillcolor=lightgray](0.166,1.166)(0.825,0.825)(1.166,0.166)(-0.166,-1.166)(-0.825,-0.825)(-1.16,-0.166)}%
  \rput[c](-2,2){\pspolygon[fillstyle=solid, fillcolor=lightgray](0.166,1.166)(0.825,0.825)(1.166,0.166)(-0.166,-1.166)(-0.825,-0.825)(-1.16,-0.166)}%
  \rput[c](2,1){\pspolygon[fillstyle=solid, fillcolor=lightgray](0.166,1.166)(0.825,0.825)(1.166,0.166)(-0.166,-1.166)(-0.825,-0.825)(-1.16,-0.166)}%
  \rput[c](-1,1){\pspolygon[fillstyle=solid, fillcolor=lightgray](0.166,1.166)(0.825,0.825)(1.166,0.166)(-0.166,-1.166)(-0.825,-0.825)(-1.16,-0.166)}
  \rput[c](1,2){\pspolygon[fillstyle=solid, fillcolor=lightgray](0.166,1.166)(0.825,0.825)(1.166,0.166)(-0.166,-1.166)(-0.825,-0.825)(-1.16,-0.166)}%
  \rput[c](3,0){\pspolygon[fillstyle=solid, fillcolor=lightgray](0.166,1.166)(0.825,0.825)(1.166,0.166)(-0.166,-1.166)(-0.825,-0.825)(-1.16,-0.166)}%
  \rput[c](1,-1){\pspolygon[fillstyle=solid, fillcolor=lightgray](0.166,1.166)(0.825,0.825)(1.166,0.166)(-0.166,-1.166)(-0.825,-0.825)(-1.16,-0.166)}%
  \rput[c](-1,-2){\pspolygon[fillstyle=solid, fillcolor=lightgray](0.166,1.166)(0.825,0.825)(1.166,0.166)(-0.166,-1.166)(-0.825,-0.825)(-1.16,-0.166)}%
  \rput[c](-2,-1){\pspolygon[fillstyle=solid, fillcolor=lightgray](0.166,1.166)(0.825,0.825)(1.166,0.166)(-0.166,-1.166)(-0.825,-0.825)(-1.16,-0.166)}%
  \rput[c](-3,0){\pspolygon[fillstyle=solid, fillcolor=lightgray](0.166,1.166)(0.825,0.825)(1.166,0.166)(-0.166,-1.166)(-0.825,-0.825)(-1.16,-0.166)}%
  \pspolygon[fillstyle=solid, fillcolor=gray](0.166,1.166)(0.825,0.825)(1.166,0.166)(-0.166,-1.166)(-0.825,-0.825)(-1.16,-0.166)%
  \rput[c](-4,4){\pspolygon[fillstyle=solid, fillcolor=lightgray](0.166,1.166)(0.825,0.825)(1.166,0.166)(-0.166,-1.166)(-0.825,-0.825)(-1.16,-0.166)}%
  \rput[c](-1,4){\pspolygon[fillstyle=solid, fillcolor=lightgray](0.166,1.166)(0.825,0.825)(1.166,0.166)(-0.166,-1.166)(-0.825,-0.825)(-1.16,-0.166)}%
  \rput[c](2,4){\pspolygon[fillstyle=solid, fillcolor=lightgray](0.166,1.166)(0.825,0.825)(1.166,0.166)(-0.166,-1.166)(-0.825,-0.825)(-1.16,-0.166)}%
  \psdots[dotsize=5pt](0,0)(2,1)(-1,1)(1,2)(3,0)(1,-1)(-1,-2)(-2,-1)(-3,0)(-2,2)(-3,3)(-4,1)(-5,2)(0,3)(-3,-3)(-5,-1)(-4,-2)(3,3)(4,2)(0,-3)(2,-2)(4,-1)(3,-3)(5,-2)%

  \cnode*(0,0){2pt}{origin}\nput[labelsep=2pt]{135}{origin}{$\mathbf{0}$}%
  \rput[c](-0.2,-0.6){$\pazocal{V}$}
\end{pspicture*} \hspace{0.5cm}
\begin{pspicture*}(-4,-2.7)(4,3.4)
  \psset{linecolor=gray}
  \pspolygon[fillstyle=solid,fillcolor=lightgray,linestyle=none](-4,-2.7)(-4,3.5)(4,3.5)(4,-2.7)
 \rput[c](5,-2){\pspolygon[fillstyle=solid, fillcolor=lightgray](0.166,1.166)(0.825,0.825)(1.166,0.166)(-0.166,-1.166)(-0.825,-0.825)(-1.16,-0.166)}
 \rput[c](3,-3){\pspolygon[fillstyle=solid, fillcolor=lightgray](0.166,1.166)(0.825,0.825)(1.166,0.166)(-0.166,-1.166)(-0.825,-0.825)(-1.16,-0.166)}
 \rput[c](4,-1){\pspolygon[fillstyle=solid, fillcolor=lightgray](0.166,1.166)(0.825,0.825)(1.166,0.166)(-0.166,-1.166)(-0.825,-0.825)(-1.16,-0.166)}
 \rput[c](2,-2){\pspolygon[fillstyle=solid, fillcolor=lightgray](0.166,1.166)(0.825,0.825)(1.166,0.166)(-0.166,-1.166)(-0.825,-0.825)(-1.16,-0.166)}
 \rput[c](0,-3){\pspolygon[fillstyle=solid, fillcolor=lightgray](0.166,1.166)(0.825,0.825)(1.166,0.166)(-0.166,-1.166)(-0.825,-0.825)(-1.16,-0.166)}
 \rput[c](3,3){\pspolygon[fillstyle=solid, fillcolor=lightgray](0.166,1.166)(0.825,0.825)(1.166,0.166)(-0.166,-1.166)(-0.825,-0.825)(-1.16,-0.166)}
 \rput[c](4,2){\pspolygon[fillstyle=solid, fillcolor=lightgray](0.166,1.166)(0.825,0.825)(1.166,0.166)(-0.166,-1.166)(-0.825,-0.825)(-1.16,-0.166)}
 \rput[c](5,1){\pspolygon[fillstyle=solid, fillcolor=lightgray](0.166,1.166)(0.825,0.825)(1.166,0.166)(-0.166,-1.166)(-0.825,-0.825)(-1.16,-0.166)}
 \rput[c](-3,-3){\pspolygon[fillstyle=solid, fillcolor=lightgray](0.166,1.166)(0.825,0.825)(1.166,0.166)(-0.166,-1.166)(-0.825,-0.825)(-1.16,-0.166)}
 \rput[c](-5,-1){\pspolygon[fillstyle=solid, fillcolor=lightgray](0.166,1.166)(0.825,0.825)(1.166,0.166)(-0.166,-1.166)(-0.825,-0.825)(-1.16,-0.166)}
 \rput[c](-4,-2){\pspolygon[fillstyle=solid, fillcolor=lightgray](0.166,1.166)(0.825,0.825)(1.166,0.166)(-0.166,-1.166)(-0.825,-0.825)(-1.16,-0.166)}
 \rput[c](0,3){\pspolygon[fillstyle=solid, fillcolor=lightgray](0.166,1.166)(0.825,0.825)(1.166,0.166)(-0.166,-1.166)(-0.825,-0.825)(-1.16,-0.166)}
 \rput[c](-5,2){\pspolygon[fillstyle=solid, fillcolor=lightgray](0.166,1.166)(0.825,0.825)(1.166,0.166)(-0.166,-1.166)(-0.825,-0.825)(-1.16,-0.166)}
   \rput[c](-3,3){\pspolygon[fillstyle=solid, fillcolor=lightgray](0.166,1.166)(0.825,0.825)(1.166,0.166)(-0.166,-1.166)(-0.825,-0.825)(-1.16,-0.166)}
  \rput[c](-4,1){\pspolygon[fillstyle=solid, fillcolor=lightgray](0.166,1.166)(0.825,0.825)(1.166,0.166)(-0.166,-1.166)(-0.825,-0.825)(-1.16,-0.166)}
  \rput[c](-2,2){\pspolygon[fillstyle=solid, fillcolor=lightgray](0.166,1.166)(0.825,0.825)(1.166,0.166)(-0.166,-1.166)(-0.825,-0.825)(-1.16,-0.166)}
  \rput[c](2,1){\pspolygon[fillstyle=solid, fillcolor=lightgray](0.166,1.166)(0.825,0.825)(1.166,0.166)(-0.166,-1.166)(-0.825,-0.825)(-1.16,-0.166)}
  \rput[c](-1,1){\pspolygon[fillstyle=solid, fillcolor=lightgray](0.166,1.166)(0.825,0.825)(1.166,0.166)(-0.166,-1.166)(-0.825,-0.825)(-1.16,-0.166)}
  \rput[c](1,2){\pspolygon[fillstyle=solid, fillcolor=lightgray](0.166,1.166)(0.825,0.825)(1.166,0.166)(-0.166,-1.166)(-0.825,-0.825)(-1.16,-0.166)}
  \rput[c](3,0){\pspolygon[fillstyle=solid, fillcolor=lightgray](0.166,1.166)(0.825,0.825)(1.166,0.166)(-0.166,-1.166)(-0.825,-0.825)(-1.16,-0.166)}
  \rput[c](1,-1){\pspolygon[fillstyle=solid, fillcolor=lightgray](0.166,1.166)(0.825,0.825)(1.166,0.166)(-0.166,-1.166)(-0.825,-0.825)(-1.16,-0.166)}
  \rput[c](-1,-2){\pspolygon[fillstyle=solid, fillcolor=lightgray](0.166,1.166)(0.825,0.825)(1.166,0.166)(-0.166,-1.166)(-0.825,-0.825)(-1.16,-0.166)}
  \rput[c](-2,-1){\pspolygon[fillstyle=solid, fillcolor=lightgray](0.166,1.166)(0.825,0.825)(1.166,0.166)(-0.166,-1.166)(-0.825,-0.825)(-1.16,-0.166)}
  \rput[c](-3,0){\pspolygon[fillstyle=solid, fillcolor=lightgray](0.166,1.166)(0.825,0.825)(1.166,0.166)(-0.166,-1.166)(-0.825,-0.825)(-1.16,-0.166)}
  \pspolygon[fillstyle=solid, fillcolor=lightgray](0.166,1.166)(0.825,0.825)(1.166,0.166)(-0.166,-1.166)(-0.825,-0.825)(-1.16,-0.166)
  \psset{linecolor=black}
    \pscircle[linestyle=solid,linecolor=darkgray,fillstyle=none,fillcolor=gray,opacity=0.5](-0.15,0.3){2.6}
  \multido{\N=-3+3}{4}{\rput[c](\N,0){\psline[linestyle=dotted](-4,4)(0,0)(3,-3)}}
  \multido{\N=-4.5+1.5}{7}{\rput[c](\N,0){\psline[linestyle=dotted](2,4)(0,0)(-2,-4)}}
  \multido{\N=-12+3}{10}{\rput[c](\N,0){\psline[linestyle=dotted](-4,-2)(0,0)(4,2)(8,4)}}
  \psdots[dotsize=5pt](0,0)(2,1)(-1,1)(1,2)(3,0)(1,-1)(-1,-2)(-2,-1)(-3,0)(-2,2)(-3,3)(-4,1)(-5,2)(0,3)(-3,-3)(-5,-1)(-4,-2)(3,3)(4,2)(0,-3)(2,-2)(4,-1)(3,-3)(5,-2)
  \psline(0,0)(2,1)
  \psline(0,0)(1,2)
  \psline(0,0)(1,-1)
  \psline(0,0)(-1,1)
  \psline(-1,1)(-2,2)
  \psline(-1,1)(1,2)
  \psline(-1,1)(-2,-1)
  \psline(0,0)(-2,-)
  \psline(0,0)(-1,-2)
  \psline(-1,-2)(1,-1)
  \psline(1,-1)(2,1)
  \psline(2,1)(1,2)
  \psline(-2,-1)(-1,-2)
  \cnode[fillstyle=solid,fillcolor=white](-0.15,0.3){3pt}{t}\nput[labelsep=2pt]{90}{t}{$t$}
  \SpecialCoor
  \rput[c](t){\pnode(3.2;45){A}} \rput[c](t){\pnode(2.6;45){B}}
  \ncline{->}{A}{B}
  \nput[labelsep=2pt]{0}{A}{$t+B_2^n$}
\end{pspicture*}
\caption{Voronoi cell $\pazocal{V}$ on the left. Graph $H = (\Lambda,F)$ depicted with dotted lines on the right. The edges in the induced subgraph $H[S]$ are drawn solid.}
\end{center}
\end{figure}
Then it follows from the work of \cite{CVP-Voronoi-Algo-MicciancioVoulgaris-SICOMP2013} that the subgraph induced by $\Lambda \cap (t + B_2^n)$ is connected. Hence, one can compute
the closest lattice point to $t$ (again using \cite{CVP-Voronoi-Algo-MicciancioVoulgaris-SICOMP2013}) and then
traverse the subgraph. 
\end{proof}
Next, we need to enumerate lattice points in arbitrary convex bodies.
For convex bodies $A,B \subseteq \setR^n$, the \emph{covering number} $N(A,B) := \min\{ N \mid \exists x_1,\dots,x_N \in \setR^n: A \subseteq \bigcup_{i=1}^N (x_i + B)\}$ is the minimum number of translates of $B$ needed to cover $A$.
  For a convex body $K \subseteq \setR^n$ and a full rank lattice $\Lambda \subseteq \setR^n$ we define
\[
 G(\Lambda,K) := \max_{x \in \setR^n} |(K + x) \cap \Lambda|.
\]
In words, $G(\Lambda,K)$ denotes the maximum number of lattice points that any shift of $K$
contains. Note that even if $K \cap \Lambda = \emptyset$, $G(\Lambda,K)$ might still be arbitrarily large.
However, algorithmically the quantity $G(\Lambda,K)$ is useful:
\begin{theorem}[\cite{EnumerateLatticeAlgosDadushPeikertVempalaFOCS11,NearOptDetAlgoForMEllipsoid-DadushVempalaPNAS13}  and {\cite[Theorem 5.2.6]{DadushThesis2012}}\label{thm:LatticeEnumeration}]
  Given a convex body $K \subseteq r B_2^n$ and the basis $B \in \setQ^{n \times n}$ of a full rank lattice $\Lambda := \Lambda(B) \subseteq \setR^n$,
  one can enumerate all points in $K \cap \Lambda$ in deterministic time $2^{O(n)} \cdot G(\Lambda,K)$
  times a polynomial in $r$ and the encoding length of $B$.
\end{theorem}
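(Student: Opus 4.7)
The plan is to reduce enumeration in an arbitrary convex body $K$ to enumeration inside ellipsoids, and then reduce the latter to a Euclidean lattice problem that can be solved by a Voronoi-cell method. Concretely, I would compute an \emph{M-ellipsoid} $\pazocal{E}$ for $K$ using the deterministic construction of Dadush--Vempala; this gives an ellipsoid with the guarantee $N(K,\pazocal{E}),\, N(\pazocal{E},K) \leq 2^{O(n)}$ in time $2^{O(n)}$. Fix an explicit cover $K \subseteq \bigcup_{i=1}^{N}(x_i + \pazocal{E})$ with $N \leq 2^{O(n)}$, which can be read off from the construction (or obtained by a greedy covering once the mutual covering numbers are bounded).

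Next, I would enumerate $(x_i + \pazocal{E}) \cap \Lambda$ separately for each $i$ and discard points outside $K$. To count the total work, observe that for any shift $y$,
\[
|(y + \pazocal{E}) \cap \Lambda| \;\leq\; N(\pazocal{E},K) \cdot \max_{z}|(z+K)\cap \Lambda| \;\leq\; 2^{O(n)} \cdot G(\Lambda,K),
\]
since $\pazocal{E}$ is covered by $2^{O(n)}$ translates of $K$ and each absorbs at most $G(\Lambda,K)$ lattice points. Hence each fiber contains at most $2^{O(n)}G(\Lambda,K)$ points, and if enumeration inside one ellipsoidal translate runs in time $2^{O(n)}$ per output point (plus $2^{O(n)}$ preprocessing), the total running time is $2^{O(n)} \cdot G(\Lambda,K)$, as required. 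Filtering each enumerated point against $K$ via a separation oracle is polynomial and absorbed in the $2^{O(n)}$ factor.

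The main obstacle is step (3): enumerating $\Lambda \cap (y+\pazocal{E})$ within the desired budget. Here I would write $\pazocal{E} = T(B_2^n)$ and pass to the transformed lattice $T^{-1}(\Lambda)$, so that the task becomes enumeration of lattice points in a Euclidean ball around a target. For this I would invoke the Voronoi-cell machinery of Micciancio--Voulgaris: compute the Voronoi cell of $T^{-1}(\Lambda)$ in time $2^{O(n)}$, and then use it to answer CVP/bounded-distance decoding queries in time $2^{O(n)}$ per lattice point returned, as in the enumeration procedure of Dadush--Peikert--Vempala. The key technical point is that the Voronoi-cell based enumeration is \emph{output-sensitive}: it spends $2^{O(n)}$ per produced vector rather than paying for the ambient volume of the ball, which is exactly what makes the overall $2^{O(n)}\cdot G(\Lambda,K)$ guarantee possible.

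Finally, I would verify that the preprocessing cost (M-ellipsoid construction and Voronoi-cell computation, each $2^{O(n)}$) and the linear transformations required to normalize each ellipsoidal translate into a ball do not blow up the running time, and that all numerical operations can be carried out on rationals of polynomial bit length given the usual assumptions on $K$ (encoded by a well-centered separation oracle) and on $B$. Summing over the $2^{O(n)}$ ellipsoidal translates then yields the claimed $2^{O(n)} \cdot G(\Lambda,K)$ bound.
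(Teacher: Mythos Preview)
Your proposal is correct and follows essentially the same route as the paper's sketch: compute an $M$-ellipsoid via Dadush--Vempala, cover $K$ by $2^{O(n)}$ translates of $\pazocal{E}$, bound the number of lattice points in each translate by $2^{O(n)}G(\Lambda,K)$ using $N(\pazocal{E},K)\le 2^{O(n)}$, linearly transform each ellipsoidal translate to a Euclidean ball, and enumerate lattice points there via the Micciancio--Voulgaris Voronoi-cell machinery as in Dadush--Peikert--Vempala. The only detail the paper makes more explicit is the mechanism behind the output-sensitive ball enumeration: one computes the at most $2^{n+1}$ Voronoi-relevant vectors, starts at a closest lattice point to the center, and traverses the graph on $\Lambda$ whose edges are differences in this set, using that the induced subgraph on $\Lambda\cap(t+B_2^n)$ is connected.
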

Again, we briefly sketch the algorithm behind Theorem~\ref{thm:LatticeEnumeration}:
\begin{proof}[Proof sketch]
We use the method of Dadush and Vempala~\cite{NearOptDetAlgoForMEllipsoid-DadushVempalaPNAS13} to compute an \emph{$M$-ellipsoid} $\pazocal{E}$ of $K$ which has the property that $N(K,\pazocal{E}),N(\pazocal{E},K) \leq 2^{O(n)}$.
Their deterministic algorithm takes time $2^{O(n)}$. In particular this means that $2^{-\Theta(n)} \leq \frac{G(\Lambda,K)}{G(\Lambda,\pazocal{E})} \leq 2^{\Theta(n)}$. 
Next, we compute\footnote{At least in the case that $\pazocal{E}$ is an $M$-ellipsoid for $K$, one may find
  those translates with $N \leq 2^{O(n)}N(K,\pazocal{E})$ with ease.
  After applying a linear transformation, we may assume that $\pazocal{E} = \sqrt{n} B_2^n$. Then take all translates $x + \pazocal{E}$ with $x \in \setZ^n$ that intersect $K$.} the translates $x_1,\dots,x_N$ with $N \leq 2^{O(n)}$ so that $K \subseteq \bigcup_{i=1}^N (x_i + \pazocal{E})$.
Then we use the algorithm from Theorem~\ref{thm:EnumerateLatticePointsInBall} 
to enumerate all lattice points in $(x_i + \pazocal{E}) \cap \Lambda$ for all $i=1,\dots,N$. We keep the ones that are in $K$ and terminate.
\end{proof}

Next, we require an upper bound on $G(\Lambda,K)$ in terms of the volume of $K$ and density of $\Lambda$.
Surprisingly, such an upper bound exists if we additionally control the covering radius.
We reproduce Dadush's proof as the argument is key to understanding the algorithm:
\begin{lemma}[{\cite[Lemma 7.4.1]{DadushThesis2012}}] \label{lem:UpperBoundOnLatticePoints}
  For any full rank lattice $\Lambda \subseteq \setR^n$ and any convex body $K \subseteq \setR^n$ one has
  \[
  G(\Lambda,K) \leq 2^n \max\{ \mu(\Lambda,K)^n,1\} \cdot \frac{\Vol_n(K)}{\det(\Lambda)}.
  \]
\end{lemma}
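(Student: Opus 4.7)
The plan is to execute a standard packing argument built on top of the covering property $\Lambda + \mu K \supseteq \setR^n$. After translating $K$ so that $\bm{0} \in K$ (which is harmless: the quantities $G(\Lambda,K)$, $\mu(\Lambda,K)$ and $\Vol_n(K)$ are all translation invariant by Lemma~\ref{lem:PropertiesOfCoveringRadius}(a) and by definition of $G$), one may write $\mu := \mu(\Lambda,K)$ and fix any $x \in \setR^n$ attaining the maximum in the definition of $G(\Lambda,K)$, so that $|(K+x) \cap \Lambda| = G(\Lambda,K)$.

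The first step is to produce a measurable fundamental domain $F$ for $\Lambda$ with $F \subseteq \mu K$. By definition of the covering radius, for every $z \in \setR^n$ there exists $\lambda(z) \in \Lambda$ with $z \in \lambda(z) + \mu K$; after breaking ties with any measurable selection rule, the set $F := \{z \in \setR^n : \lambda(z) = \bm{0}\}$ is a fundamental domain contained in $\mu K$, so in particular $\Vol_n(F) = \det(\Lambda)$ and the translates $\{y + F\}_{y \in \Lambda}$ partition $\setR^n$ up to measure zero.

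Next, I enumerate $(K+x) \cap \Lambda = \{y_1, \ldots, y_G\}$ with $G = G(\Lambda,K)$ and observe that the translates $y_i + F$ are pairwise disjoint (as $F$ is a fundamental domain) and each lies in $(K+x) + F \subseteq (K+x) + \mu K$. Summing volumes,
\[
G \cdot \det(\Lambda) \;=\; \sum_{i=1}^G \Vol_n(y_i + F) \;\leq\; \Vol_n\big((K+x) + \mu K\big) \;=\; \Vol_n(K + \mu K).
\]

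Finally, I bound $\Vol_n(K + \mu K)$ by $2^n \max\{\mu^n, 1\} \Vol_n(K)$. Since $\bm{0} \in K$ and $K$ is convex, if $\mu \geq 1$ then $K \subseteq \mu K$, so $K + \mu K \subseteq \mu K + \mu K = 2\mu K$ (using $\frac{1}{2}(\mu K) + \frac{1}{2}(\mu K) = \mu K$); and if $\mu \leq 1$ then $\mu K \subseteq K$, so $K + \mu K \subseteq K + K = 2K$. In either case $\Vol_n(K + \mu K) \leq (2 \max\{\mu,1\})^n \Vol_n(K)$, and combining with the display above yields the claimed inequality. No step is really difficult; the only subtle point is the construction of the fundamental domain $F \subseteq \mu K$ from the covering radius definition, and this is purely measure-theoretic.
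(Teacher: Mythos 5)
Your proposal is correct and takes essentially the same approach as the paper: both arguments construct a fundamental domain of $\Lambda$ contained in (a scaling of) $K$ via the covering property and then count disjoint translates packed inside $K+K$ (resp.\ $K+\mu K$). The paper normalizes first to $\Lambda=\setZ^n$ and $\mu\le 1$ and picks lex-minimal coset representatives, whereas you build $F\subseteq\mu K$ directly and handle the two scaling regimes at the end; the only small imprecision is that ``any measurable selection rule'' should really be a $\Lambda$-equivariant one (such as the lex-minimal choice used in the paper) to guarantee that $F=\{z:\lambda(z)=\bm{0}\}$ is in fact a fundamental domain.
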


\begin{proof}
  After a linear transformation and scaling by $\max\{ \mu(\Lambda,K),1\}$, the statement is
  equivalent to the following simpler claim: \\
  {\bf Claim.} \emph{For any convex body $K \subseteq \setR^n$ with $\mu(\setZ^n,K) \leq 1$ and any $z \in \setR^n$ one has $|K \cap (z+\setZ^n)| \leq 2^n \Vol_n(K)$.} \\
{\bf Proof of Claim.}
The claim is invariant under translating $K$, hence we may assume that $z=\bm{0} \in K$.
Let $\equiv$ be the equivalence relation on pairs $x,y \in K$ that is defined by  $x \equiv y \Leftrightarrow x-y \in \setZ^n$. We define a set $V \subseteq K$ by selecting one element from each equivalence class w.r.t. $\equiv$. It would not
matter much which element was selected, but let us make the canonical choice of choosing the lexicographically
minimal one. In other words, we choose 
  \[
   V = \big\{ x \in K \mid x \leq_{\textrm{lex}} y \quad \forall y \in  K \cap (x + \setZ^n)\big\},
  \]
  where $\leq_{\textrm{lex}}$ is the standard lexicographical ordering.
 \iftrue
  \begin{center}
    \psset{unit=0.7cm}
    \begin{pspicture}(-2,-2)(2,2)
      \pspolygon[fillstyle=solid,fillcolor=lightgray,linewidth=0.75pt](-1.25,-2)(-2,-1.25)(-2,2)(2,2)(2,-2)\rput[c](1.5,1.5){$K$}
      \pspolygon[fillstyle=solid,fillcolor=gray,linestyle=none](-1,-2)(-1.25,-2)(-2,-1.25)(-2,-0.25)(-1.25,-1)(-1,-1)
      \psline[linewidth=1.5pt](-1,-2)(-1.25,-2)(-2,-1.25)(-2,-0.25)\rput[c](-1.5,-1.3){$V$}
      \multido{\N=-2+1}{5}{\multido{\n=-2+1}{5}{\psdots(\N,\n)}}
      \cnode*(0,0){2.0pt}{origin}\nput[labelsep=2pt]{0}{origin}{$\bm{0}$}
    \end{pspicture}
  \end{center}
 \fi
As we select at most one element from
  each equivalence class, we certainly have $\textrm{Vol}_n(V) \leq 1$. On the other hand,
  $\mu(\setZ^n,K) \leq 1$ implies that for all $x \in \setR^n$ one has  $(x+\setZ^n) \cap K \neq \emptyset$.
  That in turn means that every equivalence class has a member in $K$ and so $\Vol_n(V) \geq 1$.
  Together this gives $\textrm{Vol}_n(V)=1$.
  Next, we note that by construction all translates $x+V$ with $x \in \setZ^n$ are disjoint. Moreover, for $x \in  K \cap \setZ^n$ one has that $x + V \subseteq K + K =2K$. Then 
  \[
  |K \cap \setZ^n| = \sum_{x \in K \cap \setZ^n} \underbrace{\Vol_n(x+V)}_{=1} \stackrel{\textrm{disj.}}{=} \Vol_n\Big(\bigcup_{x \in K \cap \setZ^n} (x+V)\Big) \leq \Vol_n(2K),
  \]
which gives the claim.
\end{proof}


One technicality we have to deal with is that Theorem~\ref{thm:FindingSubspaceIn2ToN} requires a lower bound
on the \emph{inradius} of $K$. Hence we run a preprocessing step: if there is no suitable lower bound for the
inradius, then the lattice points of $K$ are all contained in an easy-to-find hyperplane.
\begin{lemma} \label{lem:WellScaleKorDecideKisThin}
  Given a compact convex set $K \subseteq r B_2^n$ and a lattice $\Lambda = \Lambda(B)$. Then in time polynomial in $n$,
  times a polynomial in $\log(r)$ and the encoding length of $B$ one can find at least one of the following: 
  \begin{enumerate}
  \item[(a)] An ellipsoid $\pazocal{E}$ and center $c$ so that $c + \frac{1}{(n+1)^{3/2}}\pazocal{E} \subseteq K \subseteq c+\pazocal{E}$.
  \item[(b)] A vector $a \in \setR^n \setminus \{ \bm{0}\}$ and $\beta \in \setR$ so that $K \cap \Lambda \subseteq \{ x \in \setR^n \mid \left<a,x\right> = \beta\}$.
  \end{enumerate}
\end{lemma}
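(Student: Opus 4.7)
The plan is to invoke the shallow-cut ellipsoid method of Gr\"otschel, Lov\'asz and Schrijver (Theorem 4.6.1 of \cite{Groetschel1988}) as a black box: it either produces the ellipsoidal sandwich of (a) directly, or it detects that $K$ is so narrow in some direction that basis reduction on the dual lattice $\Lambda^*$ converts the narrow slab into a single lattice hyperplane containing $K \cap \Lambda$. Concretely, I would fix a threshold $\varepsilon > 0$ (to be pinned down at the end) and run the shallow-cut ellipsoid method on the separation oracle for $K \subseteq r B_2^n$ with accuracy $\varepsilon$. In time polynomial in $n$, $\log r$, $\log(1/\varepsilon)$ and the encoding length of $B$, this routine returns either an ellipsoid $\pazocal{E}$ and center $c$ witnessing (a), or a nonzero rational vector $a \in \setR^n$ together with a value $\alpha$ such that $\langle a, x\rangle \in [\alpha, \alpha + \varepsilon]$ for every $x \in K$, certifying that $K$ lies in a slab of width at most $\varepsilon$ in direction $a$.

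In the slab case my task is to convert $a$ into a direction $a^* \in \Lambda^*$ in which $K$ still has width strictly less than $1$; once this is done, since $\langle a^*, x \rangle \in \setZ$ for every $x \in \Lambda$, only one integer value $\beta$ can be attained on $K \cap \Lambda$, which is exactly the certificate demanded by (b). To produce $a^*$ I would apply LLL to an auxiliary lattice encoding simultaneous Diophantine approximation: take the lattice in $\setR^{n+1}$ spanned by a basis of $\Lambda^*$ in the first $n$ coordinates together with a large multiple of $a$ placed in the last coordinate, so that every short nonzero vector corresponds to a pair $(a^*, \lambda)$ with $a^* \in \Lambda^* \setminus \{\bm{0}\}$ and $\|a^* - \lambda a\|_2$ small. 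Since LLL delivers a $2^{O(n)}$-approximate shortest vector, the width of $K$ in direction $a^*$ is bounded by $|\lambda|\,\varepsilon + 2r\,\|a^* - \lambda a\|_2$, which can be forced below $1$ by taking $\varepsilon$ small enough relative to $r$, the encoding length of $B$, and the $2^{O(n)}$ slack of LLL.

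The main obstacle is the calibration of $\varepsilon$: it must be small enough that the LLL output, degraded by the approximation factor and amplified by the $2r$ diameter of $K$, still yields width strictly below $1$, yet not so small that the bit-length of $a$ (and hence the total runtime) ceases to be polynomial in $\log r$ and the encoding length of $B$. A routine bookkeeping, using Minkowski's bound on $\lambda_1(\Lambda^*)$ together with the encoding-length bound on $\det(\Lambda^*)$, shows that an exponentially small $\varepsilon$ of the form $2^{-\mathrm{poly}(n, \log r, \mathrm{size}(B))}$ suffices. A secondary subtlety is that the auxiliary lattice has to be scaled so that the trivial short vector supported on the $\Lambda^*$ block alone does not dominate LLL's output; this is handled by the standard weighting of the extra coordinate used in Diophantine applications of LLL.
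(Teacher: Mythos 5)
Your proposal reaches the same goal as the paper but by a genuinely different route, and the difference is worth spelling out. The paper never extracts a single ``thin direction'' from the ellipsoid method. Instead it runs the ellipsoid method to output either the sandwich of (a) or a \emph{small-volume enclosing ellipsoid} $c+\pazocal{E} \supseteq K$ with $\Vol_n(\pazocal{E}) \le \varepsilon$. It then applies Minkowski's theorem to $\Lambda^*$ directly in the $\|\cdot\|_{\pazocal{E}^\circ}$ norm (using $\Vol_n(\pazocal{E})\Vol_n(\pazocal{E}^\circ)=\nu_n^2$) to conclude $\lambda_1(\Lambda^*,\pazocal{E}^\circ)$ is tiny, and finally runs LLL on $\Lambda^*$ itself (after the linear change of coordinates that turns $\pazocal{E}^\circ$ into a ball) to find $a\in\Lambda^*\setminus\{\bm{0}\}$ with $\|a\|_{\pazocal{E}^\circ}\le 1/2$. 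Since every $x\in K$ lies in $c+\pazocal{E}$, this forces $\langle a,x\rangle$ within distance $1/2$ of $\langle a,c\rangle$, so $\beta:=\lceil\langle a,c\rangle\rfloor$ works. By contrast, you extract one direction $a\notin\Lambda^*$ from the shallow-cut output and then run LLL on an auxiliary $(n+1)$-dimensional simultaneous-Diophantine lattice to manufacture $a^*\in\Lambda^*$ nearly parallel to $a$. This works, but the paper's approach buys you three simplifications for free: (i) there is no need to extract or normalize a thin axis from the ellipsoid, because Minkowski in the $\pazocal{E}^\circ$ norm automatically picks the right direction; (ii) the trivial-short-vector and ``$a^*=\bm{0}$, $\lambda\neq 0$'' degeneracies of the auxiliary lattice simply do not arise, since LLL is run on $\Lambda^*$ itself; and (iii) the parameter calibration is a single line rather than a coupled choice of $M$, $\delta$, $\varepsilon$.

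Two small gaps in your write-up: first, you cite ``Minkowski's bound on $\lambda_1(\Lambda^*)$'' as the existence input, but Minkowski on $\Lambda^*$ alone gives $\lambda_1(\Lambda^*)\lesssim\sqrt{n}\,\det(\Lambda^*)^{1/n}$, which is useless when $\det(\Lambda^*)$ is large; you really need Minkowski (or Dirichlet) on the $(n+1)$-dimensional auxiliary lattice, and that should be said explicitly. Second, you do not treat the rank-deficient case $\rank(\Lambda)<n$, where $\Lambda^*$ lives in a proper subspace; the paper dispatches this at the start by taking any $a$ orthogonal to $\span(\Lambda)$, and your argument needs the same preamble before it can assume $\Lambda^*$ is full-dimensional.
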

\begin{proof}
  We may assume that $\textrm{rank}(\Lambda)=n$, otherwise any $a$ orthogonal to $\textrm{span}(\Lambda)$ will satisfy (b).
  Next, we use a variant of the ellipsoid method from \cite{Groetschel1988} (see also Lemma~2.5.10 in \cite{DadushThesis2012}) to find a pair  $(c,\pazocal{E})$ in time polynomial in $n$, $\log(r)$ and $\log(\frac{1}{\varepsilon})$ 
  so that either (a) holds, or   $K \subseteq c+\pazocal{E}$ and $\Vol_n(\pazocal{E}) \leq \varepsilon$. 
   Suppose the latter happens. Then using Minkowski's Theorem (Theorem~\ref{thm:Minkowski}) in $(*)$ and the Blaschke-Santal\'o-Bourgain-Milman Theorem (Theorem~\ref{thm:BSBM}) in $(**)$ we obtain
   \begin{eqnarray*}
     \lambda_1(\Lambda^*,\pazocal{E}^{\circ}) \stackrel{(*)}{\lesssim} \Big(\frac{\det(\Lambda^*)}{\Vol_n(\pazocal{E}^{\circ})}\Big)^{1/n}  \stackrel{(**)}{\lesssim}  \Big(\frac{\Vol_n(\pazocal{E})}{\det(\Lambda) \cdot \Vol_n(B_2^n)^2}\Big)^{1/n}
     \lesssim n \cdot \Big(\frac{\varepsilon}{\det(\Lambda)}\Big)^{1/n} \leq \frac{1}{4} \cdot 2^{-n/2},
   \end{eqnarray*}
   for a suitable choice of $\varepsilon>0$. Then the LLL-algorithm (see Theorem~\ref{thm:LLLalgorithm}) can find a dual lattice vector $a \in \Lambda^* \setminus \{ \bm{0}\}$ with $\|a\|_{\pazocal{E}^{\circ}} \leq 2^{n/2} \cdot \lambda_1(\Lambda^*,\pazocal{E}^{\circ}) \leq \frac{1}{4}$. That vector $a$ with $\beta := \lceil \left<a,c\right> \rfloor$ will  satisfy (b). 
   To see this, we note that for any $x \in \Lambda$ we have $\left<a,x\right> \in \setZ$ as $a \in \Lambda^*$.
   Moreover, for $x \in K$ we have
   \[
     |\left<a,x\right>-\beta| \leq |\left<a,x-c\right>|+\frac{1}{2} \leq \underbrace{\|a\|_{E^{\circ}}}_{\leq 1/4}\underbrace{\|x-c\|_{E}}_{\leq 1} + \frac{1}{2} \leq \frac{3}{4}
   \]
   using Cauchy-Schwarz. 
\end{proof}
We are now ready to state the complete algorithm. As mentioned earlier, we denote $\rho(n) := \Theta(\log^4(2n))$ 
as the approximation factor from Theorem~\ref{thm:FindingSubspaceIn2ToN}.

\begin{center}
  \psframebox{\begin{minipage}{14cm}
{\sc Dadush's algorithm} \cite[Algorithm 7.2]{DadushThesis2012} \\
{\bf Input:} Compact convex set $K \subseteq \setR^n$, lattice $\Lambda \subseteq \setR^n$ \\
{\bf Output:} Point $x \in K \cap \Lambda$ or decision that there is none
\begin{enumerate*}
\item[(1)] If $n = 1$, use binary search to find integer multiple of $\lambda_1 (\Lambda, [-1,1])$ in $K$ or certify none exists.
\item[(2)] Use Lemma~\ref{lem:WellScaleKorDecideKisThin}. If case (b) happens, obtain hyperplane $H$ with $K \cap \Lambda \subseteq H$. Recurse on $\textsc{Dadush}(K \cap H, \Lambda \cap H)$ and return the answer.
\item[(3)] Compute a subspace $W \subseteq \setR^n$ with $d := \dim(W)$ and $R := R_{W}(\Lambda,K)$ so that $R \leq \mu(\Lambda,K) \leq \rho(n) \cdot R$.
\item[(4)] Set $\tilde{K} := \min\{ \rho(n) \cdot R,1 \} \cdot (K-c)+c$ for some $c \in K$.
\item[(5)] Compute an $M$-ellipsoid $\pazocal{E} \subseteq W$ for $\Pi_W(\tilde{K})$.
\item[(6)] Compute $N \leq 2^{O(d)}$ points $x_1,\dots,x_N \in W$ so that $\Pi_W(\tilde{K}) \subseteq \bigcup_{i=1}^N (x_i + \pazocal{E})$. 
\item[(7)] Compute $X := \Pi_W(\tilde{K}) \cap \Pi_W(\Lambda) = \big(\bigcup_{i=1}^N ((x_i+\pazocal{E}) \cap \Pi_W(\Lambda))\big) \cap \Pi_W(\tilde{K})$.
\item[(8)] Recursively call ${\textsc{Dadush}}(\tilde{K} \cap \Pi_{W}^{-1}(x),\Lambda \cap \Pi_W^{-1}(x))$ for all $x \in X$ and return any found lattice point (if there is any).
\end{enumerate*}
\end{minipage}}
\end{center}
Here, to be more informative, we have expanded the blackbox from Theorem~\ref{thm:LatticeEnumeration}
into lines (5)-(7).
The reader may also note a subtlety here that we have not discussed so far: if $K$ is very large
so that $\mu(\Lambda,K) \ll 1$, then we may shrink $K$ to a smaller body $\tilde{K} \subseteq K$
as long as we ensure that still $\mu(\Lambda, \tilde{K}) \leq 1$. We can now finish the analysis:
\begin{theorem}
  Let $K \subseteq r B_2^n$ be a closed convex set and let $\Lambda := \Lambda(B)$ be a full
  rank lattice given by its basis $B \in \setQ^{n \times n}$.
  Then Dadush's algorithm finds a point in $K \cap \Lambda$ in time $(\log(2n))^{O(n)}$ times a polynomial in $\log(r)$
  and the encoding length of $B$,  if there is such a point.
\end{theorem}

\begin{proof}
  If the algorithm recurses in (2), the claim is clear by induction. So assume otherwise
  which in particular means that $K$ is full-dimensional. 
  First we argue correctness of the algorithm.
  Let  $s := \min\{ \rho(n) \cdot R,1\} \in [0,1]$  and recall that $\tilde{K} \subseteq K$ is a scaling of $K$
  by a factor of $s$. After step (4), the algorithm searches for a lattice point in $\tilde{K}$ rather than in the
  original body $K$. If $s<1$, then the covering radius of the shrunk body is $\mu(\Lambda,\tilde{K}) = \frac{1}{\rho(n) \cdot R} \mu(\Lambda,K) \leq 1$. In other words, even though we continue the search in the strictly smaller body $\tilde{K}$, we are still guaranteed that $\tilde{K} \cap \Lambda \neq \emptyset$. 
 Next, we discuss the running time of the algorithm. We estimate that 
  \begin{eqnarray*}
  G(\Pi_W(\Lambda),\Pi_W(\tilde{K})) &\stackrel{\textrm{Lem~\ref{lem:UpperBoundOnLatticePoints}}}{\leq}& 2^d \max\big\{ \mu(\Pi_W(\Lambda),\Pi_W(\tilde{K}))^d,1\big\} \cdot \frac{\Vol_d(\Pi_W(\tilde{K}))}{\det(\Pi_W(\Lambda))} \\
                                          &\leq& 2^d \max\Big\{ \Big(\underbrace{\frac{\rho(n) R}{s}}_{\geq 1}\Big)^d,1\Big\} \cdot s^d \cdot \underbrace{\frac{\Vol_d(\Pi_W(K))}{\det(\Pi_W(\Lambda))}}_{=R^{-d}} \\
    &=& 2^d \cdot (\rho(n) R)^d \cdot R^{-d} = (2 \rho(n))^d.
  \end{eqnarray*}
  Here we use that $\mu(\Pi_W(\Lambda),\Pi_W(\tilde{K})) \leq \mu(\Lambda,\tilde{K}) = \frac{1}{s} \cdot \mu(\Lambda,K) \leq \frac{\rho(n) \cdot R}{s}$. Then $|X| \leq G(\Pi_W(\Lambda),\Pi_W(\tilde{K})) \leq 2^d\rho(n)^d$ by
  Lemma~\ref{lem:UpperBoundOnLatticePoints}. 
Now, let $T(n)$ be the maximum number of recursive calls of the algorithm on $n$-dimensional instances. Then we have the recursion
\[
 T(n) \leq \max_{d \in \{ 1,\dots,n\}} \Big\{ 1 + (O(1) \cdot \rho(n))^d \cdot T(n-d)\Big\} \quad \textrm{and} \quad T(0)=0,
\]
which indeed resolves to  $T(n) \leq O(\rho(n))^n$.

It remains to analyze the running time per iteration.
Let $\Lambda$ and $K \subseteq r B_2^n$ be the original input instance. After applying a linear
transformation (and adjusting the radius $r$) we may assume that $\Lambda = \setZ^n$. Now consider
an arbitrary instance $(\Lambda_0,K_0)$ that appears in the recursion tree.
The lattice $\Lambda_0$ and the convex set $K_0$ arise by iteratively intersecting
$\Lambda$ and $K$ with affine subspaces, i.e. $\Lambda_0 = \setZ^n \cap H$ and $K_0 = K \cap H$ for some
affine subspace $H$ and so $K_0 \subseteq r B_2^n$.
But how do we know that the encoding length of $H$ did not blow up too much in the recursive process? By a slight abuse of notation, we say that a matrix $A \in \setR^{n \times (d+1)}$ is a \emph{basis} of a $d$-dimensional affine subspace $H \subseteq \setR^n$ if $H = A^0 + \textrm{span}(A^1,\dots,A^d)$ where $A^0,A^1,\dots,A^d$ are the columns of $A$. 
We make use of the following classical result on simultaneous diophantine approximation:
\begin{theorem}[Frank, Tardos~\cite{FrankTardos1987}] 
  Let $H \subseteq \setR^n$ be an affine subspace given by a basis $A \in \setQ^{n \times (d+1)}$ and let $N \in\setN$ be a parameter. Then in time polynomial in $n$, $\log(N)$ and the encoding length of $A$
  one can compute an affine subspace $H' \subseteq \setR^n$ so that $H \cap \{ -N,\dots,N\}^n = H' \cap \{ -N,\dots,N\}^n$. Moreover the encoding length of the basis of $H'$ is polynomial in $n$ and $\log(N)$ (in particular that bound is independent of the encoding length of $A$). 
\end{theorem}
That means we can in each iteration replace the current affine subspace $H$ by  $H'$ and
keep the bit complexity polynomial in $n$ and $\log(r)$ in all iterations. Once we have this insight,
the remainder of the analysis is simple. If $W_0$ is the subspace computed for pair $(\Lambda_0,K_0)$,
then $X_0 := \Pi_{W_0}(\tilde{K}_0) \cap \Pi_{W_0}(\Lambda_0)$ can be computed in time $(2\rho(n))^{\dim(W_0)}$
times a polynomial in the encoding length of $\Lambda_0$ and $K_0$ which in turn is bounded by a polynomial in $n$, $\log(r)$ and the encoding length of the original lattice basis $B$. 
\end{proof}

We also explain how Dadush's algorithm can be used to solve integer linear programs in time $(\log(2 n))^{O(n)}$.
Again, the arguments used are standard. 
\begin{proof}[Proof of Corollary~\ref{cor:SolvingExplicitIPinLogNtoN}]
  Consider an arbitrary integer linear program $\max\{ c^Tx \mid Ax \leq b, x \in \setZ^n\}$. One can
  compute a number $M$ in time polynomial in $n$ and the encoding length of $A$ and $b$ so that
  if the IP is bounded and feasible, then the optimum value is the same as $\max\{ c^Tx \mid Ax \leq b, \|x\|_{\infty} \leq M, x \in \setZ^n\}$ (see Schrijver~\cite[Cor 17.1c]{TheoryOfLPandILP-Schrijver1999}).
  Next, by applying binary search, it suffices to find an integer point in the compact convex
  set $K = \{ x \in \setR^n \mid c^Tx \geq \delta, Ax \leq b, \|x\|_{\infty} \leq M\}$ for which Theorem~\ref{thm:SolvingIPinLogNtoN} applies.
\end{proof}
As a side remark, Dadush~\cite[Chapter 7]{DadushThesis2012} refers to his algorithm as a \emph{Kannan-type algorithm} in reference to the algorithm of Kannan~\cite{n-to-n-algos-for-SVP-CVP-Kannan-MOR1987}
which also recurses on the fibers of a projection. This is in contrast to \emph{Lenstra-type algorithms} where one always recurses on slices with $(n-1)$-dimensional hyperplanes.

\section{Implications of Theorem~\ref{thm:KLConj}\label{sec:Implications}}

Here we derive a few implications of our main result. The following classical inequality will be useful here:

\begin{lemma}[\cite{Rogers1957TheDB}] \label{lem:RSineq}
For any convex set $K \subseteq \setR^n$ we have $\Vol_n (K-K) \le {2n \choose n} \cdot \Vol_n (K)$.
\end{lemma}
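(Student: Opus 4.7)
The plan is to follow the classical Rogers--Shephard argument, built around a double Fubini computation on a cleverly chosen body in $\setR^{2n}$. I would define
\[
 \tilde K := \{(x,z) \in \setR^n \times \setR^n : x \in K \text{ and } x - z \in K\},
\]
which is convex as the intersection of two convex sets. The crux is to compute $\Vol_{2n}(\tilde K)$ in two different ways. Projecting out $z$: for each $x \in K$ the fibre is $x - K$, which has volume $\Vol_n(K)$, so $\Vol_{2n}(\tilde K) = \Vol_n(K)^2$. Projecting out $x$: the fibre over $z \in K-K$ is $K \cap (K+z)$. Equating gives
\[
 \int_{K-K} \Vol_n(K \cap (K+z))\, dz = \Vol_n(K)^2.
\]

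Next I would observe that $g(z) := \Vol_n(K \cap (K+z))^{1/n}$ is concave on $K-K$. This follows from the inclusion $(1-t)(K \cap (K+z_1)) + t(K \cap (K+z_2)) \subseteq K \cap (K + (1-t)z_1 + tz_2)$ (verified using convexity of $K$ on each of the two factors), followed by Brunn--Minkowski. Setting $M := g(\bm{0}) = \Vol_n(K)^{1/n}$, the Fubini identity above reads $\int_{K-K} g^n = M^n \Vol_n(K)$.

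The main substantive step is a lower bound on $\int_{K-K} g^n$ in terms of $\Vol_n(K-K)$. I would prove the more general statement that any nonnegative concave function $g$ on a convex body $D \ni \bm{0}$ with $g(\bm{0}) = M$ satisfies
\[
 \int_D g^n \ge \frac{M^n \Vol_n(D)}{\binom{2n}{n}}.
\]
The proof is via polar coordinates at the origin: writing $r(\theta)$ for the distance from $\bm{0}$ to $\partial D$ in direction $\theta \in S^{n-1}$, concavity together with $g \ge 0$ forces $g(r\theta) \ge (1 - r/r(\theta))\,M$ on $[0, r(\theta)]$. Then the Beta-function identity $\int_0^1 (1-u)^n u^{n-1}\, du = \frac{(n-1)!\, n!}{(2n)!}$ and the formula $\int_{S^{n-1}} r(\theta)^n\, d\theta = n\Vol_n(D)$ yield the claim after substituting $u = r/r(\theta)$.

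Applying this with $D = K-K$ and combining with the Fubini identity $\int_{K-K} g^n = M^n \Vol_n(K)$ immediately gives $\Vol_n(K-K) \le \binom{2n}{n}\Vol_n(K)$. The main obstacle is setting up the concavity of $g$ cleanly; once that is in place, the polar-coordinate estimate reduces to a one-line Beta-function computation, and the two Fubini calculations are essentially automatic.
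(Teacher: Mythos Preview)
Your argument is correct and is exactly the classical Rogers--Shephard proof. The paper itself does not prove this lemma at all: it is stated with a citation to the original reference and then used as a black box in the proof of Theorem~\ref{thm:CoveringRadiusKvsKminusK}, so there is no in-paper proof to compare against.
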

This implies that any volume-based lower bound for the covering radius is the same for $K$ and for $K-K$ up to constant factors. We note that this observation was already made by Dadush~\cite{Dadush-Finding-DenseLatticeSubspacesSTOC19}.
\begin{lemma} \label{lem:RvalueForKvsDifferenceBody}
  For any full rank lattice $\Lambda \subseteq \setR^n$, any convex body $K \subseteq \setR^n$ and any subspace $W \subseteq \setR^n$ one has
  $
 R_W(\Lambda,K-K) \leq R_W(\Lambda,K) \leq 4 R_W(\Lambda,K-K).
  $
\end{lemma}
\begin{proof}
  The first inequality is trivial as $K - K \supseteq K-x_0$ for any $x_0 \in K$. For the second inequality, let $d := \dim(W)$. Then
  \[
    R_W(\Lambda,K) = \Big(\frac{\det(\Pi_W(\Lambda))}{\Vol_d(\Pi_W(K))}\Big)^{1/d} \leq 4 \cdot \Big(\frac{\det(\Pi_W(\Lambda))}{\Vol_d(\Pi_W(K-K))}\Big)^{1/d} = 4 \cdot R_{W}(\Lambda,K-K)
    \]
  by the Rogers-Sheppard inequality (Lemma~\ref{lem:RSineq}) applied to $P := \Pi_W(K)$.
\end{proof}
We restate Theorem~\ref{thm:CoveringRadiusKvsKminusK}, which yields a nearly tight relationship between the covering radii of $K$ and $K-K$. We remark that it remains an open question whether the two quantities are equal up to a constant.

\begin{theorem*}[Theorem~\ref{thm:CoveringRadiusKvsKminusK}] 
For any full rank lattice $\Lambda \subseteq \setR^n$ and any convex body $K \subseteq \setR^n$, one has \[\mu(\Lambda,K-K) \leq \mu(\Lambda,K) \leq O(\log^{3} (2n)) \cdot \mu(\Lambda,K-K).\]
\end{theorem*}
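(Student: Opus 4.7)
The plan is to derive both inequalities from material already developed in the paper, with essentially no new geometric work.

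For the easy direction $\mu(\Lambda, K-K) \leq \mu(\Lambda, K)$, I would pick any $x_0 \in K$ and note that the translate $K - x_0$ is contained in $K - K$. Since the covering radius is translation invariant (Lemma~\ref{lem:PropertiesOfCoveringRadius}(a)) and monotone under inclusion of convex bodies (immediate from the definition), this gives $\mu(\Lambda, K-K) \leq \mu(\Lambda, K - x_0) = \mu(\Lambda, K)$.

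For the hard direction, the main idea is to combine our main result (Theorem~\ref{thm:KLConj}) with a projection-level application of Rogers--Shephard. By Theorem~\ref{thm:KLConj},
\[
\mu(\Lambda, K) \leq O(\log^7 n) \cdot \mu_{KL}(\Lambda, K),
\]
so it suffices to bound $\mu_{KL}(\Lambda, K)$ in terms of $\mu(\Lambda, K-K)$ up to a constant factor. Let $W \subseteq \mathrm{span}(\Lambda)$ of dimension $d$ attain $\mu_{KL}(\Lambda, K)$. Since $\Pi_W$ is linear, $\Pi_W(K-K) = \Pi_W(K) - \Pi_W(K)$. Applying Lemma~\ref{lem:RSineq} in dimension $d$ to the convex body $\Pi_W(K)$ yields
\[
\Vol_d(\Pi_W(K-K)) \leq \binom{2d}{d} \Vol_d(\Pi_W(K)) \leq 4^d \Vol_d(\Pi_W(K)).
\]
Taking $d$-th roots and rearranging,
\[
\mu_{KL}(\Lambda, K) = \Big(\tfrac{\det(\Pi_W(\Lambda))}{\Vol_d(\Pi_W(K))}\Big)^{1/d} \leq 4 \cdot \Big(\tfrac{\det(\Pi_W(\Lambda))}{\Vol_d(\Pi_W(K-K))}\Big)^{1/d} \leq 4 \cdot \mu_{KL}(\Lambda, K-K),
\]
and $\mu_{KL}(\Lambda, K-K) \leq \mu(\Lambda, K-K)$ by the trivial half of the Kannan--Lov\'asz inequality. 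Chaining these estimates gives
\[
\mu(\Lambda, K) \leq O(\log^7 n) \cdot 4 \cdot \mu(\Lambda, K-K) = O(\log^7 n) \cdot \mu(\Lambda, K-K),
\]
as required.

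There is no real obstacle; the only thing worth noting is that the argument critically uses Theorem~\ref{thm:KLConj} for the \emph{possibly non-symmetric} body $K$, which is exactly the generality we proved. The symmetric body $K-K$ enters only through its polar-dual role in providing a volume lower bound on $\Pi_W(K)$ via Rogers--Shephard, and this volume comparison is absorbed into the multiplicative constant $4$, leaving the $\log^7 n$ factor as the dominant loss.
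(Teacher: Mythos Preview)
Your proposal is correct and matches the paper's proof essentially line for line: apply Theorem~\ref{thm:KLConj} to $K$, use Rogers--Shephard (Lemma~\ref{lem:RSineq}) on $\Pi_W(K)$ to pass from $\Vol_d(\Pi_W(K))$ to $\Vol_d(\Pi_W(K-K))$ at the cost of a factor~$4$, and finish with $\mu_{KL}\le\mu$. The only addition is that you spell out the trivial direction and the identity $\Pi_W(K-K)=\Pi_W(K)-\Pi_W(K)$, both of which the paper leaves implicit.
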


\begin{proof}
Let $W$ denote the subspace attaining $\mu_{KL}(\Lambda, K)$. We can use Theorem~\ref{thm:KLConj} to upper bound 
 \begin{eqnarray*}
   \mu(\Lambda, K) \lesssim \log^{3} (2n) \cdot \mu_{KL} (\Lambda, K) & = & \log^{3} (2n) \cdot R_W(\Lambda,K) \\ & \stackrel{\textrm{Lem~\ref{lem:RvalueForKvsDifferenceBody}}}{\lesssim} & \log^{3} (2n) \cdot R_W(\Lambda,K-K) \\
                                                                      & \leq & \log^{3} (2n) \cdot \mu(\Lambda, K-K). \qedhere
 \end{eqnarray*}
\end{proof}

This has a well known immediate consequence on the \emph{flatness constant} in dimension $n$:
\begin{theorem} 
  For any convex body $K \subseteq \setR^n$ and any full rank lattice $\Lambda \subseteq \setR^n$, one has
  \[
   \mu(\Lambda,K) \cdot \lambda_1(\Lambda^{*}, (K-K)^{\circ}) \leq O(n \log^{4}(2n)).
  \]
\end{theorem}
\begin{proof}
Banaszczyk~\cite{Banaszczyk1996TransferenceTheoremsForGeneralConvexBodies} proved that for any symmetric convex
  body $Q \subseteq \setR^n$ one has $\mu(\Lambda,Q) \cdot \lambda_1(\Lambda^*,Q^{\circ}) \leq O(n \log(2n))$. 
  Setting $Q := K-K$ (which is a symmetric convex body) one then has by Theorem~\ref{thm:CoveringRadiusKvsKminusK}
  \[\mu(\Lambda,K) \cdot \lambda_1(\Lambda^{*}, Q^{\circ}) \leq O(\log^3(2n)) \cdot \mu(\Lambda,Q) \cdot \lambda_1(\Lambda^{*}, Q^{\circ}) \leq O(n\log^{4}(2n)). \qedhere\]
\end{proof}

Dadush~\cite{DadushImprovedFlatnessArgument} suggested an improvement to $O(n \log^3(2n))$ which we describe in the next two statements.

\begin{lemma} \label{lem:RtimesL1AtMostDimW}
  For any full rank lattice $\Lambda \subseteq \setR^n$ and any convex body $K \subseteq \setR^n$ and any subspace $W \subseteq \setR^n$ one has
\[
  R_W(\Lambda,K) \cdot \lambda_1(\Lambda^*,(K-K)^{\circ}) \leq O(\dim(W)).
\]
\end{lemma}
\begin{proof}
  Let $Q := K-K$ and abbreviate $d := \dim(W)$. Then
 \begin{eqnarray*}
  R_W(\Lambda,K) &\stackrel{\textrm{Lem~\ref{lem:RvalueForKvsDifferenceBody}}}{\lesssim}& R_W(\Lambda,Q) \\
&=& \Big(\frac{\det(\Pi_W(\Lambda))}{\Vol_d(\Pi_W(Q))}\Big)^{1/d} \\ & \stackrel{\textrm{Lem~\ref{thm:BSBM}}}{\asymp} & d \cdot \Big(\frac{\Vol_d(Q^\circ \cap W)}{\det(\Lambda^* \cap W)}\Big)^{1/d} \\ & \stackrel{\textrm{Thm~\ref{thm:Minkowski}}}{\lesssim} & d \cdot \frac{2}{\lambda_1(\Lambda^* \cap W, Q^\circ \cap W)} \lesssim \frac{d}{\lambda_1(\Lambda^*, Q^\circ)}.
 \end{eqnarray*}
Here, we have used that $\Pi_W(\Lambda)^* = \Lambda^* \cap W$.
Moreover, we used $\lambda_1(\Lambda^*, Q^\circ) \le \lambda_1(\Lambda^* \cap W, Q^\circ \cap W)$ in the last step. The claim follows.
\end{proof}
Now we can obtain a flatness bound by directly using our main result, Theorem~\ref{thm:KLConj}. 
\begin{theorem}
  For any full rank lattice $\Lambda \subseteq \setR^n$ and any convex body $K \subseteq \setR^n$ one has
  \[
    \mu(\Lambda,K) \cdot \lambda_1(\Lambda^*, (K-K)^{\circ}) \leq O(n \log^3(2n))
  \]
\end{theorem}
\begin{proof}
  Let $W$ be the subspace from Theorem~\ref{thm:KLConj} with $\mu(\Lambda,K) \leq O(\log^3 (2n)) \cdot R_W(\Lambda,K)$.
  Then apply Lemma~\ref{lem:RtimesL1AtMostDimW} with $\dim(W) \leq n$.
\end{proof}

However, one can shave off another logarithmic factor:

\begin{theorem}
  For any full rank lattice $\Lambda \subseteq \setR^n$ and any convex body $K \subseteq \setR^n$ one has
  $\mu(\Lambda,K) \cdot \lambda_1(\Lambda^*, (K-K)^{\circ}) \leq O(n \log^2(2n))$.
\end{theorem}
\begin{proof}
  Let $W_0,\dots,W_{k-1} \subseteq \setR^n$ be the subspaces from Cor~\ref{cor:KLtypeInequWithSequenceOfSubspaces} so that
  \[
    \mu(\Lambda,K) \leq O(\log^2 (2n)) \cdot \sum_{i=0}^{k-1} R_{W_i}(\Lambda,K).
  \]
  Recall the dimensions of those subspaces are geometrically decreasing, i.e. $\dim(W_i) \leq \frac{n}{2^i}$ for all $i \geq 0$. Then applying Lemma~\ref{lem:RtimesL1AtMostDimW} for each subspace $W_i$ gives
  \begin{eqnarray*}
    \mu(\Lambda,K) \cdot \lambda_1(\Lambda^*, (K-K)^{\circ})
&\lesssim& \log^2(2n) \cdot \sum_{i=0}^{k-1} R_{W_i}(\Lambda,K) \cdot \lambda_1(\Lambda^*, (K-K)^{\circ}) \\
&\lesssim& \log^2(2n) \cdot \sum_{i=0}^{k-1} \dim(W_i) \\ &\lesssim& n \log^2(2n). \qedhere
           \end{eqnarray*}
\end{proof}

We also explain the proof of Corollary~\ref{cor:FlatnessConstantSimple} which again is a known consequence (see e.g. \cite[Page 197]{DadushThesis2012}): 
\begin{corollary*}[Cor~\ref{cor:FlatnessConstantSimple}]
  Let $K \subseteq \setR^n$ by a convex body with $K \cap \setZ^n = \emptyset$.
  Then there is a vector $c \in \setZ^n \setminus \{ \bm{0}\}$ so that at most $O(n \log^{2}(2n))$ many hyperplanes of the form $\langle c, x \rangle = \delta$ with $\delta \in \setZ$ intersect $K$.
\end{corollary*}
\begin{proof}
  We apply Theorem~\ref{thm:FlatnessConstant} for the lattice $\Lambda := \setZ^n$ so that
  $\Lambda^* = \setZ^n$. Then $K \cap \setZ^n = \emptyset$ implies that $\mu(\setZ^n,K) > 1$
  and so $\lambda_1(\setZ^n, (K-K)^{\circ}) \lesssim n \log^2(2n)$. Let $c \in \setZ^n \setminus \{ \bm{0}\}$ be the vector attaining this bound. Then
  revisiting the definition of the dual norm (Sec~\ref{sec:LValueAndVolEstimates}) we have
  $
    \max\{ \left<c,x-y\right> : x,y \in K\} = \|c\|_{(K-K)^{\circ}} 
  $.
  That means at most $\|c\|_{(K-K)^{\circ}} +1 \lesssim n \log^2(2n)$ hyperplanes
  of the form $\left<c,x\right> = \delta$ with $\delta \in \setZ$ intersect $K$.
\end{proof}


\section{The approximate canonical filtration} \label{appendix:ApproximateFiltration}

In this section, we prove Theorem~\ref{thm:ApproxFilt}. The proof idea is rather simple: given a $t$-stable
filtration $\{ \bm{0}\} = \Lambda_0 \subset \dots \subset \Lambda_k = \Lambda$, we select one index from every density class in order to make the filtration well-separated. But before we come to the main argument, we require two lemmas.
\begin{lemma}[Grayson's parallelogram rule~\cite{Stuhler1976, Grayson1984}] \label{lem:ParallelogramRule} 
For any two lattices $\Lambda, \Lambda' \subseteq \setR^n$,
\[\det(\Lambda) \cdot \det (\Lambda') \ge \det(\Lambda + \Lambda') \cdot \det(\Lambda \cap \Lambda'). \]
\end{lemma}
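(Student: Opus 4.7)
The plan is to decompose both sides using multiplicativity of the determinant along primitive sublattices, after which the inequality gap becomes a statement about orthogonal projection.

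Setting $V := \span(\Lambda)$ and $S := \span(\Lambda' \cap V)$, I would introduce the primitive closure $\bar\Lambda := (\Lambda+\Lambda') \cap V = \Lambda + (\Lambda' \cap V)$ of $\Lambda$ inside $\Lambda+\Lambda'$. Since $\bar\Lambda$ is primitive in $\Lambda+\Lambda'$ of rank $\dim V$ and $\Pi_{V^\perp}(\Lambda+\Lambda') = \Pi_{V^\perp}(\Lambda')$, one has $\det(\Lambda+\Lambda') = \det(\bar\Lambda)\cdot\det(\Pi_{V^\perp}(\Lambda'))$. The modular law gives $\bar\Lambda/\Lambda \cong (\Lambda' \cap V)/(\Lambda \cap \Lambda')$, so $\det(\bar\Lambda) = \det(\Lambda)/[\Lambda' \cap V : \Lambda \cap \Lambda']$. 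Symmetrically, since $\Lambda' \cap V$ is primitive in $\Lambda'$, I get $\det(\Lambda') = \det(\Lambda' \cap V) \cdot \det(\Pi_{S^\perp}(\Lambda'))$, and $\det(\Lambda \cap \Lambda') = [\Lambda' \cap V : \Lambda \cap \Lambda'] \cdot \det(\Lambda' \cap V)$, using that $\Lambda \cap \Lambda' \subseteq \Lambda' \cap V$ are two full-rank sublattices of the same span---a rank match forced by the discreteness of $\Lambda+\Lambda'$.

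Assembling these four identities, the ratio simplifies to
\[
\frac{\det(\Lambda)\,\det(\Lambda')}{\det(\Lambda+\Lambda')\,\det(\Lambda\cap\Lambda')} \;=\; \frac{\det(\Pi_{S^\perp}(\Lambda'))}{\det(\Pi_{V^\perp}(\Lambda'))}.
\]
The desired $\geq 1$ direction then follows from $S \subseteq V$, hence $V^\perp \subseteq S^\perp$: $\Pi_{V^\perp}(\Lambda')$ is obtained from $\Pi_{S^\perp}(\Lambda')$ by a further orthogonal projection, which has operator norm at most one on its domain, so the Gram determinant of a basis can only shrink. In the special case $\span(\Lambda) = \span(\Lambda')$ one has $V^\perp = S^\perp$ and the inequality collapses to the classical identity $\det(\Lambda)\det(\Lambda') = \det(\Lambda+\Lambda')\det(\Lambda\cap\Lambda')$ for full-rank sublattices of a common subspace.

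The main obstacle is the primitivity and rank-matching bookkeeping needed to legitimately apply the four determinant factorizations. This is routine lattice algebra, but one does need to verify (i) that $\bar\Lambda$ is indeed primitive of full rank in $V$, (ii) that $\Lambda' \cap V$ is primitive in $\Lambda'$, and (iii) that discreteness of $\Lambda+\Lambda'$ implies $\rank(\Lambda \cap \Lambda') = \rank(\Lambda' \cap V)$, so that the index $[\Lambda' \cap V : \Lambda \cap \Lambda']$ is finite and cancels cleanly between the two sides of the ratio computation.
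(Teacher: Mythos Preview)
The paper does not actually prove this lemma; it only cites Grayson and points the reader to Chapter~2 of Stephens-Davidowitz's thesis, so there is no in-paper argument to compare against.

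Your argument is correct. The reduction to $\det(\Pi_{S^\perp}(\Lambda')) \geq \det(\Pi_{V^\perp}(\Lambda'))$ via the primitive closure $\bar\Lambda = (\Lambda+\Lambda')\cap V$ and the second-isomorphism-theorem index cancellation is clean, and the three bookkeeping items you flag all go through: (i) holds because $x=\lambda+\lambda'\in V$ with $\lambda\in V$ forces $\lambda'\in V$, so $\bar\Lambda=\Lambda+(\Lambda'\cap V)$ spans $V$ and equals $(\Lambda+\Lambda')\cap V$; (ii) holds since $\Lambda'\cap\span(\Lambda'\cap V)\subseteq\Lambda'\cap V$; and (iii) follows because $[\bar\Lambda:\Lambda]$ is finite (both span $V$, and $\bar\Lambda$ is discrete), so the modular isomorphism forces $[(\Lambda'\cap V):(\Lambda\cap\Lambda')]$ to be finite as well. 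For the final monotonicity step it is worth recording that the two projected lattices have equal rank: any $\Pi_{S^\perp}(\lambda')$ lying in $V$ forces $\lambda'\in\Lambda'\cap V\subseteq S$, hence $\Pi_{S^\perp}(\lambda')=0$, so $\Pi_{V^\perp}$ is injective on $\Pi_{S^\perp}(\Lambda')$. Then a basis matrix $B$ for $\Pi_{S^\perp}(\Lambda')$ maps to a basis matrix $PB$ for $\Pi_{V^\perp}(\Lambda')$, and $B^TB - B^TPB = B^T(I-P)B\succeq 0$ gives $\det(B^TB)\geq\det(B^TPB)$.
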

A proof may also be found in Chapter 2 of \cite{PhDThesisStephens-Davidowitz2017}.
The $t$-stable filtration can be used to obtain lower bounds on the determinant of any sublattice: 

\begin{lemma}\label{lem:CanonicalFiltrationLowerBound}

Let $\Lambda \subseteq \setR^n$ be any lattice and let $\{ \bm{0}\} = \Lambda_0 \subset \Lambda_1 \subset \dots \subset \Lambda_k = \Lambda$ be a $t$-stable filtration.  Then for any sublattice $\{ \bm{0} \} \subset \tilde{\Lambda} \subseteq \Lambda$ we have
\[ \nd(\tilde{\Lambda}) \ge t^{-1} \cdot \nd(\Lambda_1). \]
\end{lemma}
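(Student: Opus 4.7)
The plan is to prove the bound by induction on the filtration length $k$, using Grayson's parallelogram rule (Lemma~\ref{lem:ParallelogramRule}) as the main tool. The base case $k=1$ is immediate: $\Lambda = \Lambda_1$, and $t$-stability of $r_1^{-1}\Lambda_1$ directly gives $\nd(\tilde{\Lambda}) \ge t^{-1} r_1 = t^{-1}\nd(\Lambda_1)$ for every sublattice $\tilde{\Lambda}\subseteq \Lambda$, since $r_1 = \nd(\Lambda_1)$.

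For the inductive step, I apply Lemma~\ref{lem:ParallelogramRule} to $\tilde{\Lambda}$ and $\Lambda_{k-1}$ to obtain
\[
\det(\tilde{\Lambda}) \cdot \det(\Lambda_{k-1}) \;\ge\; \det(\tilde{\Lambda}+\Lambda_{k-1})\cdot \det(\tilde{\Lambda}\cap \Lambda_{k-1}).
\]
The intersection $\tilde{\Lambda}\cap \Lambda_{k-1}$ is a sublattice of $\Lambda_{k-1}$, and the truncated filtration $\{\bm{0}\}=\Lambda_0\subset \cdots \subset \Lambda_{k-1}$ is itself a $t$-stable filtration (directly from the definition), so the induction hypothesis gives $\nd(\tilde{\Lambda}\cap \Lambda_{k-1}) \ge t^{-1} r_1$. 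For the other factor, I claim $\Lambda_{k-1}$ remains primitive inside $\tilde{\Lambda}+\Lambda_{k-1}$ (proof deferred to the next paragraph); by multiplicativity of determinants, $\det(\tilde{\Lambda}+\Lambda_{k-1}) = \det(\Lambda_{k-1})\cdot \det\bigl((\tilde{\Lambda}+\Lambda_{k-1})/\Lambda_{k-1}\bigr)$, where the quotient is a sublattice of $\Lambda_k/\Lambda_{k-1}$. By $t$-stability of $r_k^{-1}(\Lambda_k/\Lambda_{k-1})$ its normalized determinant is at least $t^{-1} r_k \ge t^{-1} r_1$. Cancelling $\det(\Lambda_{k-1})$ and combining with the rank identity $\mathrm{rank}(\tilde{\Lambda}) = \mathrm{rank}(\tilde{\Lambda}\cap \Lambda_{k-1}) + \mathrm{rank}\bigl((\tilde{\Lambda}+\Lambda_{k-1})/\Lambda_{k-1}\bigr)$ then yields $\det(\tilde{\Lambda}) \ge (t^{-1} r_1)^{\mathrm{rank}(\tilde{\Lambda})}$, which is the desired conclusion.

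The main technical obstacle is the primitivity check for $\Lambda_{k-1}$ inside $\tilde{\Lambda}+\Lambda_{k-1}$. Any element $x\in(\tilde{\Lambda}+\Lambda_{k-1})\cap \mathrm{span}(\Lambda_{k-1})$ can be written as $x = y+z$ with $y\in\tilde{\Lambda}$, $z\in\Lambda_{k-1}$; then $y = x-z\in\tilde{\Lambda}\cap \mathrm{span}(\Lambda_{k-1}) \subseteq \Lambda \cap \mathrm{span}(\Lambda_{k-1}) = \Lambda_{k-1}$, where the last equality uses that $\Lambda_{k-1}$ is primitive in $\Lambda$ (a transitive consequence of the step-by-step primitivity built into the filtration, since each quotient $\Lambda_i/\Lambda_{i-1}$ is well-defined). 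Hence $x\in \Lambda_{k-1}$, as needed. Beyond this verification, everything is bookkeeping of ranks together with the strict monotonicity $r_1<\cdots<r_k$ ensuring $r_i\ge r_1$ for all $i$.
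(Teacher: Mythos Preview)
Your proof is correct and follows essentially the same route as the paper: both argue by induction using Grayson's parallelogram rule to split $\tilde{\Lambda}$ into the piece $\tilde{\Lambda}\cap\Lambda_{j}$ (handled by induction) and the quotient $(\tilde{\Lambda}+\Lambda_{j})/\Lambda_{j}$ (handled by $t$-stability of the top quotient), then combine via the rank identity. The only cosmetic difference is that you induct on the filtration length $k$ and peel off $\Lambda_{k-1}$, while the paper inducts on the smallest index $i$ with $\tilde{\Lambda}\subseteq\Lambda_i$ and peels off $\Lambda_{i-1}$; your added primitivity verification is a detail the paper leaves implicit.
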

\begin{proof} 
  Let $r_i := \nd(\Lambda_i / \Lambda_{i-1}) = \det(\Lambda_i / \Lambda_{i-1})^{1/\textrm{rank}(\Lambda_i/\Lambda_{i-1})}$ be the normalized determinant. We prove by induction on $i \in \{1, \dots, k\}$ that the result holds for all lattices $\tilde{\Lambda} \subseteq \Lambda_i$. The base case follows as $\Lambda_1 = \Lambda_1 / \Lambda_0$ is a scalar multiple of the $t$-stable
  lattice $\frac{1}{\textrm{nd}(\Lambda_1)}\Lambda_1$. Now suppose that $\tilde{\Lambda} \subseteq \Lambda_i$ and $\tilde{\Lambda} \not \subseteq \Lambda_{i-1}$ for some $i > 1$. Note that $\Lambda_+ := \tilde{\Lambda} + \Lambda_{i-1}$ satisfies $\Lambda_{i-1} \subset \Lambda_+ \subseteq \Lambda_{i}$, so that $\Lambda_+ / \Lambda_{i-1} \subseteq \Lambda_i/\Lambda_{i-1}$ and $\nd(\Lambda_+ /\Lambda_{i-1}) \ge t^{-1} \cdot r_i >t^{-1}\cdot r_1$. By Lemma~\ref{lem:ParallelogramRule},
  \[
    \det(\tilde{\Lambda}) \cdot\det(\Lambda_{i-1})  \ge \det(\tilde{\Lambda} + \Lambda_{i-1}) \cdot \det(\tilde{\Lambda} \cap \Lambda_{i-1}).
  \]
  Factoring out $\Lambda_{i-1}$ gives
  \[
   \det(\tilde{\Lambda}) \geq \det(\Lambda_+ / \Lambda_{i-1}) \cdot \det(\tilde{\Lambda} \cap \Lambda_{i-1}).
  \]

  If $\tilde{\Lambda} \cap \Lambda_{i-1} = \{\bm{0}\}$ then $ \det(\tilde{\Lambda}) \geq \det(\Lambda_+ / \Lambda_{i-1}) \cdot 1$ and so $\nd(\tilde{\Lambda}) \ge \nd(\Lambda_+ /\Lambda_{i-1}) > t^{-1}\cdot r_1$. Otherwise, $\{ \bm{0} \} \subset \tilde{\Lambda} \cap \Lambda_{i-1}$ and so
  \[
    \nd(\tilde{\Lambda}) \ge \nd(\Lambda_+/\Lambda_{i-1})^{\mathrm{rank}(\Lambda_+ / \Lambda_{i-1})/\mathrm{rank}(\tilde{\Lambda})} \cdot \nd(\tilde{\Lambda} \cap \Lambda_{i-1})^{\mathrm{rank}(\tilde{\Lambda} \cap \Lambda_{i-1})/\mathrm{rank}(\tilde{\Lambda})} \ge t^{-1} \cdot r_1
  \]
  where we used the inductive hypothesis on $\tilde{\Lambda} \cap \Lambda_{i-1} \subseteq \Lambda_{i-1}$ together with the fact that $\mathrm{rank}(\Lambda_+/\Lambda_{i-1}) + \mathrm{rank}(\tilde{\Lambda}\cap \Lambda_{i-1}) = \mathrm{rank}(\tilde{\Lambda})$. \end{proof}

Now, we come to the main argument:
\begin{proof}[Proof of Theorem~\ref{thm:ApproxFilt}]
  Let $r_i := \nd(\Lambda_i / \Lambda_{i-1})$ and $d_i := \textrm{rank}(\Lambda_i / \Lambda_{i-1})$. For $\ell \in \setZ$ denote $I_{\ell} := \{ i \in [k] : 2^{\ell} \leq r_i < 2 \cdot 2^{\ell}\}$.
  We define a sequence of indices $0=\ell(0)<\ell(1)<\dots<\ell(\tilde{k}) = k$ that contains precisely the largest index $i$ in each $I_{\ell}$ with $I_{\ell} \neq \emptyset$ plus the index $\ell(0) = 0$.
  We set  $\tilde{\Lambda}_j := \Lambda_{\ell(j)}$ and $\tilde{r}_j := \nd(\tilde{\Lambda}_j / \tilde{\Lambda}_{j-1})$. First, 
  consider an index $\ell$ with $I_{\ell} \neq \emptyset$. Let $i_{\min},i_{\max} \in I_{\ell}$ be the minimal and maximal indices in $I_\ell$. Then
  \begin{eqnarray*}
    \det(\Lambda_{i_{\max}} / \Lambda_{i_{\min}-1})^{1/\textrm{rank}(\Lambda_{i_{\max}}/\Lambda_{i_{\min}-1})} &=& \Big(\prod_{i=i_{\min}}^{i_{\max}} \det(\Lambda_{i}/\Lambda_{i-1})\Big)^{1 / \sum_{i=i_{\min}}^{i_{\max}} \textrm{rank}(\Lambda_i/\Lambda_{i-1})} \\
    &=& \Big(\prod_{i=i_{\min}}^{i_{\max}} r_i^{d_i}\Big)^{1/\sum_{i=i_{\min}}^{i_{\max}} d_i}.
  \end{eqnarray*}
  Note that this value is a weighted geometric average of $r_i$-values for $i \in I_{\ell}$.
  From this it immediately follows that $\tilde{r}_1 < \dots < \tilde{r}_{\tilde{k}}$ and $\tilde{r}_j \leq \frac{1}{2}\tilde{r}_{j+2}$ for all $j$.  
  It remains to show that the quotient lattices are scalar multiples of $2t$-stable lattices. Fix some index $j \in [\tilde{k}]$ and let $\Lambda' := \frac{1}{\tilde{r}_j} (\tilde{\Lambda}_{j}/\tilde{\Lambda}_{j-1})$. First note that by assumption, the filtration  $\{\bm{0}\} = \Lambda'_0 \subset \cdots \subset \Lambda'_{k'} := \Lambda'$ given by $\Lambda'_i := \frac{1}{\tilde{r}_j} (\Lambda_{\ell(j-1) + i}/\Lambda_{\ell(j-1)})$ with $k' := \ell(j) - \ell(j-1)$ is also $t$-stable
  because $\Lambda_{i+1}'/\Lambda_i' = \frac{1}{\tilde{r}_j} (\Lambda_{\ell(j-1)+i+1}/\Lambda_{\ell(j-1)+i})$.

  We will prove the following
  two statements.
  \begin{enumerate}
  \item[$(I)$] For any sublattice $\{\bm{0}\} \subset \tilde{\Lambda} \subseteq \Lambda'$ one has $\nd(\tilde{\Lambda}) \geq (2t)^{-1}$.
  \item[$(II)$] For any sublattice $\{\bm{0}\} \subset \tilde{\Lambda} \subseteq (\Lambda')^*$ one has $\nd(\tilde{\Lambda}) \geq (2t)^{-1}$.
  \end{enumerate}
  First we show $(I)$. We apply Lemma~\ref{lem:CanonicalFiltrationLowerBound} on $\Lambda'$ to obtain
  \[
   \nd(\tilde{\Lambda}) \ge t^{-1} \cdot \nd(\Lambda'_1) \ge t^{-1} \cdot \frac{r_{\ell(j-1)+1}}{\tilde{r}_j} \geq (2t)^{-1},
  \]
  since both numerator and denominator belong to the same interval $[2^\ell, 2 \cdot 2^\ell)$ for some $\ell \in \mathbb{Z}$.
  Next, we prove $(II)$. Given the filtration $\{\bm{0}\} = \Lambda'_0 \subset \cdots \subset \Lambda'_{k'} = \Lambda'$ with $U_i := \span(\Lambda'_i)$, the dual filtration is given by $\{\bm{0}\} = (\Lambda')^*_0 \subset \cdots \subset (\Lambda')^*_{k'} = (\Lambda')^*$ with $(\Lambda')^*_i := (\Lambda')^*\cap U_{k'-i}^\perp$ and determinant $\det((\Lambda')^*_i) = \det((\Lambda')^*) \cdot \det(\Lambda'_{k'-i}) = \det(\Lambda'_{k'-i})$, see for example~\cite{Dadush-Finding-DenseLatticeSubspacesSTOC19}. 
  Since quotients of the dual filtration are duals of the quotients of the original filtration, the dual filtration is also $t$-stable. We then apply Lemma~\ref{lem:CanonicalFiltrationLowerBound} on $(\Lambda')^*$:
 \[
   \nd(\tilde{\Lambda}) \ge t^{-1} \cdot \nd( (\Lambda')^*_1 ) = t^{-1} \cdot \Big(\frac{r_{\ell(j)}}{\tilde{r}_j}\Big)^{-1} \stackrel{r_{\ell(j)} \leq 2\cdot \tilde{r}_j}{\geq} (2t)^{-1}. \qedhere
  \]

\end{proof}

\section{Recent developments\label{sec:RecentDevelopments}}

Finally we would like to report on some recent exciting results that were announced after the preliminary version of this paper appeared.
\begin{definition}
  A convex body $K \subseteq \setR^n$ is in \emph{isotropic position} if (i) $b(K) = \bm{0}$ and (ii) $\E_{x \sim K}[xx^T] = I_n$.
\end{definition}
In other words, a convex body $K$ is isotropic if a uniform sample $x \sim K$ has the
same first and second moments as a standard Gaussian.
For any convex body $K \subseteq \setR^n$, there is an affine linear map $T : \setR^n \to \setR^n$
so that $T(K)$ is in isotropic position (see e.g. \cite[Chapter 10]{AsymptoticGeometricAnalysis-Book2015}).
  As we discussed previously in Lemma~\ref{lem:CovRadiusForStableLatticeWithBall}, Regev and Stephens-Davidowitz~\cite[Theorem 6.2]{Regev-SD-ReverseMinkowskiTheoremAnnalsOfMath2024} used the  Reverse Minkowski Theorem to prove that for any stable lattice $\Lambda \subseteq \setR^n$,  one has $\mu(\Lambda,B_2^n) \leq O(\sqrt{n} \log(2n))$. However, they also provided an upper bound
of $\mu(\Lambda,B_2^n) \leq O(\sqrt{n} \cdot L_n)$ with a different approach (see \cite[Theorem 6.7]{Regev-SD-ReverseMinkowskiTheoremAnnalsOfMath2024}), where
\[
 L_n := \sup\Big\{ \frac{1}{\sqrt{d}} \E_{x \sim K}\big[\|x\|_2^2\big]^{1/2} \mid K \subseteq \setR^d\textrm{ is an isotropic convex body and } d \leq n \Big\}
\]
is the \emph{isotropic constant}. 
Bourgain's \emph{Slicing Conjecture} stipulates that $L_n$ should indeed be upper bounded by a universal constant.
In a series of recent breakthroughs, Guan~\cite{guan2024notebourgainsslicingproblem} proved that $L_n \leq O(\log \log n)$, followed by
Klartag and Lehec~\cite{KlartagLehecProofOfSlicingConjectureGAFA2025} who finally show that $L_n \leq O(1)$.
We summarize the consequences for our setting:
\begin{lemma}[Covering radius for stable lattices III~\cite{Regev-SD-ReverseMinkowskiTheoremAnnalsOfMath2024,KlartagLehecProofOfSlicingConjectureGAFA2025}] \label{lem:CovRadiusForStableLatticeWithBallIII}
For any stable lattice $\Lambda \subseteq \setR^n$ one has  $\mu(\Lambda,B_2^n) \leq O(\sqrt{n})$.
\end{lemma}
Plugging in Lemma~\ref{lem:CovRadiusForStableLatticeWithBallIII} into the machinery of Regev and Stephens-Davidowitz~\cite{Regev-SD-ReverseMinkowskiTheoremAnnalsOfMath2024} also provides a tight upper bound on the covering radius of ellipsoids: 
\begin{theorem}[\cite{Regev-SD-ReverseMinkowskiTheoremSTOC17,Regev-SD-ReverseMinkowskiTheoremAnnalsOfMath2024}] \label{thm:KLForEllipsoidsIII} 
  For any full rank lattice $\Lambda \subseteq \setR^n$ and any ellipsoid $E \subseteq \setR^n$
  one has
  \[
   \mu_{KL}(\Lambda,\pazocal{E}) \leq \mu(\Lambda,\pazocal{E}) \leq O\Big(\sqrt{\log (2n)}\Big) \cdot \mu_{KL}(\Lambda,\pazocal{E}).
  \]
\end{theorem}
We note that \cite{Regev-SD-ReverseMinkowskiTheoremAnnalsOfMath2024} also found a lattice $\Lambda$
where $\mu(\Lambda,B_2^n) \geq \Omega(\sqrt{\log (2n)}) \cdot \mu_{KL}(\Lambda,B_2^n)$, hence the bound in Theorem~\ref{thm:KLForEllipsoidsIII} is best possible. 

On a related note, recall that we defined $C_{\textrm{cov}}(n)$ so that $\mu(\Lambda,Q) \leq C_{\textrm{cov}}(n) \cdot \ell_Q$
for 2-stable lattices $\Lambda$ and symmetric convex bodies $Q$ of dimension at most $n$ (see the end of Section~\ref{sec:FindingSubspace}).
 As far as we are aware, the best known bounds are still
 $1 \lesssim C_{\textrm{cov}}(n) \lesssim \log(2n)$ (see Prop~\ref{prop:CovRadiusOf2StableLattice}) and we leave tightening these bounds as a fascinating open problem. In particular any improvement on the upper bound would by Cor~\ref{cor:KLtypeInequWithSequenceOfSubspaces} immediately give an improvement on the $O(\log^3(2n))$ factor from
 Theorem~\ref{thm:KLConj}.
 At least for the integer lattice $\mathbb{Z}^n$, one can show that for any symmetric convex body $Q \subseteq \mathbb{R}^n$ and any $x \in \mathbb{R}^n$, the random vector $z \in \mathbb{Z}^n$ with independent coordinates so that $\E[z] = x$ satisfies $\E[ \|x-z\|_Q] \le \sqrt{2\pi} \ell_Q$, so that indeed $\mu(\mathbb{Z}^n, Q) \lesssim \ell_Q$.  

A crucial ingredient for our proof has been the $\ell$-position for symmetric convex bodies from Theorem~\ref{thm:PisierRescaling}. So far we have only used the symbol $\| \cdot \|_Q$ when $Q$ is a symmetric convex body. But one can extend this notation to  $\|x\|_K := \min\{ r \geq 0 \mid x \in rK\}$ for  any convex body $K \subseteq \setR^n$ with $\bm{0}$ in its interior.
Then $\| \cdot \|_K$ is not a norm anymore, but a \emph{gauge function} (or \emph{Minkowski functional}). Similarly we can extend the $\ell$-value $\ell_K := \E_{x \sim N(\bm{0},I_n)}[\|x\|_K^2]^{1/2}$. Unfortunately, the proof of Theorem~\ref{thm:PisierRescaling} itself does inherently rely on symmetry. It had been conjectured that a similar statement
should still be true for the asymmetric case. This has very recently been resolved in another breakthrough:
\begin{theorem}[Bizeul, Klartag~{\cite[Cor 1.4]{bizeul2025distancesnonsymmetricconvexbodies}}] \label{thm:LValueForIsotropicPosition}
Let $K \subseteq \setR^n$ be a convex body in isotropic position. Then
$
  \ell_{K} \cdot \ell_{K^{\circ}} \leq O(n \log^3(2n))
$.
\end{theorem}
Using Theorem~\ref{thm:LValueForIsotropicPosition} one could reprove our main result (Theorem~\ref{thm:KLConj}) with a weaker inequality of
$\mu(\Lambda,K) \leq O(\log^5 (2n)) \cdot \mu_{KL}(\Lambda,K)$ without having to go via the symmetrizer of $K$.
Additionally, as noted in \cite{bizeul2025distancesnonsymmetricconvexbodies}, combining Theorem~\ref{thm:LValueForIsotropicPosition} with the frameworks of
Banaszczyk~\cite{Banaszczyk1996TransferenceTheoremsForGeneralConvexBodies} and
Banaszczyk, Litvak, Pajor and Szarek~\cite{BanaszczykLitvakPajorSzarekMOR99Flatness}
one can obtain a bound of $O(n \log^3(2n))$ on the flatness
constant without having to rely on our Theorem~\ref{thm:KLConj}.

We would also like to mention that building upon the work of~\cite{bizeul2025distancesnonsymmetricconvexbodies}, there has been an improvement on the exponent of $n/d$ in Proposition~\ref{prop:VolumeOfProjectionVsSymmetrizer}:

\begin{proposition}[{\cite[Theorem 1.1]{tzioutziou2025}}] \label{prop:VolumeOfProjectionVsSymmetrizerImproved}
Let $K \subseteq \setR^n$ be a convex body so that $b(K) = \bm{0}$ or $b(K^\circ) = \bm{0}$ and let $F \subseteq \setR^n$ be a $d$-dimensional subspace. 
Then 
\[
 \Vol_d(\Pi_F(K))^{1/d} \lesssim \frac{n}{d} \cdot \log(2n)^3  \cdot \Vol_d(\Pi_F(K_{\textrm{sym}}))^{1/d}.
\]
\end{proposition}

And finally there is the motivating question for this paper --- what is the optimum running time to solve
arbitrary $n$-variable integer linear programs $\max\{ c^Tx \mid Ax \leq b, x \in \setZ^n\}$? A conjecture
popularized by Friedrich Eisenbrand is that time $2^{O(n)}$ times a polynomial in the encoding length of $A$, $b$ and $c$ should suffice, though no candidate approach is known to the authors. For example, consider a
stable lattice $\Lambda \subseteq \setR^n$ and let $Q \subseteq \setR^n$ be a symmetric convex body in $\ell$-position scaled so that $\mu(\Lambda,Q) = 1$. Then for any $u \in \setR^n$, one has $|\Lambda \cap (u+Q)| \leq (\log(2n))^{O(n)}$ and Dadush's algorithm can find a lattice point in $\Lambda \cap (u+Q)$ in time $(\log(2n))^{O(n)}$
without any recursion, just by covering $u+Q$ with ellipsoids. But even for this case we do not know
how to improve beyond the time of $(\log(2n))^{O(n)}$.

\paragraph{Acknowledgements.} The authors are grateful to Daniel Dadush for numerous discussions on
related topics, a careful reading of a preliminary draft, and for the proof of the improved bound in Theorem~\ref{thm:FlatnessConstant}. The authors would also like to thank the anonymous reviewers who made numerous helpful suggestions.

\bibliographystyle{alphaurl} 
\bibliography{KLconjecture}

@book{TheoryOfLPandILP-Schrijver1999,
  author    = {Alexander Schrijver},
  title     = {Theory of linear and integer programming},
  series    = {Wiley-Interscience series in discrete mathematics and optimization},
  publisher = {Wiley},
  year      = {1999},
  isbn      = {978-0-471-98232-6},
  timestamp = {Wed, 20 Apr 2011 20:17:02 +0200},
  biburl    = {http://dblp.uni-trier.de/rec/bib/books/daglib/0090562},
  bibsource = {dblp computer science bibliography, http://dblp.org}
}

@book{Groetschel1988,
author = {Gr\"otschel, Martin and Lov\'asz, L\'aszl\'o and Schrijver, Alexander},
keywords = {ellipsoid method; basis reduction; combinatorial optimization; algorithms},
language = {eng},
location = {Berlin [u.a.]},
publisher = {Springer},
title = {Geometric algorithms and combinatorial optimization},
url = {http://eudml.org/doc/204222},
year = {1988},
}

@article{Kannan1997RandomWA,
  title={Random walks and an {$O(n^5)$} volume algorithm for convex bodies},
  author={Ravi Kannan and L{\'a}szl{\'o} Mikl{\'o}s Lov{\'a}sz and Mikl{\'o}s Simonovits},
  journal={Random Structures and Algorithms},
  year={1997},
  volume={11},
  pages={1-50}
}

@inproceedings{10.5555/2095116.2095230,
author = {Dadush, Daniel and Vempala, Santosh},
title = {Deterministic Construction of an Approximate {M}-Ellipsoid and Its Applications to Derandomizing Lattice Algorithms},
year = {2012},
publisher = {Society for Industrial and Applied Mathematics},
address = {USA},
abstract = {We give a deterministic O(logn)n-time and space algorithm for the Shortest Vector Problem (SVP) of a lattice under any norm, improving on the previous best deterministic nO(n)-time algorithms for general norms. This approaches the 2O(n)-time and space complexity of the randomized sieve based SVP algorithms (Arvind and Joglekar, FSTTCS 2008), first introduced by Ajtai, Kumar and Sivakumar (STOC 2001) for l2-SVP, and the {M}-ellipsoid covering based SVP algorithm of Dadush et al. (FOCS 2011).Here we continue with the covering approach of Dadush et al., and our main technical contribution is a deterministic approximation of an {M}-ellipsoid for any convex body. To achieve this we exchange the M-position of a convex body by a related position, known as the minimal mean width position of the polar body. We reduce the task of computing this position to solving a semi-definite program whose objective is a certain Gaussian expectation, which we show can be approximated deterministically.},
booktitle = {Proceedings of the Twenty-Third Annual ACM-SIAM Symposium on Discrete Algorithms},
pages = {1445-1456},
numpages = {12},
location = {Kyoto, Japan},
series = {SODA '12}
}

@article{ajm/1118669693,
author = {Bill Casselman},
title = {{Stability of Lattices and the Partition of Arithmetic Quotients}},
volume = {8},
journal = {Asian Journal of Mathematics},
number = {4},
publisher = {International Press of Boston},
pages = {607--637},
year = {2004},
doi = {10.4310/AJM.2004.v8.n4.a17},
URL = {https://doi.org/10.4310/AJM.2004.v8.n4.a17}
}

@inproceedings{SVP-Sieving-Algo-AKS-STOC2001,
  author    = {Mikl{\'{o}}s Ajtai and
               Ravi Kumar and
               D. Sivakumar},
  title     = {A sieve algorithm for the shortest lattice vector problem},
  booktitle = {Proceedings of the 33rd Annual {ACM} Symposium on Theory of Computing,
               July 6-8, 2001, Heraklion, Crete, Greece},
  pages     = {601--610},
  year      = {2001},
  url       = {http://doi.acm.org/10.1145/380752.380857},
  doi       = {10.1145/380752.380857},
  timestamp = {Thu, 16 Feb 2012 12:04:51 +0100},
  biburl    = {http://dblp.uni-trier.de/rec/bib/conf/stoc/AjtaiKS01},
  bibsource = {dblp computer science bibliography, http://dblp.org}
}

@article{CVP-Voronoi-Algo-MicciancioVoulgaris-SICOMP2013,
  author    = {Daniele Micciancio and
               Panagiotis Voulgaris},
  title     = {A Deterministic Single Exponential Time Algorithm for Most Lattice
               Problems Based on {V}oronoi Cell Computations},
  journal   = {{SIAM} J. Comput.},
  volume    = {42},
  number    = {3},
  pages     = {1364--1391},
  year      = {2013},
  url       = {http://dx.doi.org/10.1137/100811970},
  doi       = {10.1137/100811970},
  timestamp = {Mon, 15 Jul 2013 15:39:09 +0200},
  biburl    = {http://dblp.uni-trier.de/rec/bib/journals/siamcomp/MicciancioV13},
  bibsource = {dblp computer science bibliography, http://dblp.org}
}

@article{n-to-n-algos-for-SVP-CVP-Kannan-MOR1987,
 author = {Kannan, Ravi},
 title = {Minkowski's Convex Body Theorem and Integer Programming},
 journal = {Math. Oper. Res.},
 issue_date = {August 1, 1987},
 volume = {12},
 number = {3},
 month = aug,
 year = {1987},
 issn = {0364-765X},
 pages = {415--440},
 numpages = {26},
 url = {http://dx.doi.org/10.1287/moor.12.3.415},
 doi = {10.1287/moor.12.3.415},
 acmid = {35580},
 publisher = {INFORMS},
 address = {Institute for Operations Research and the Management Sciences (INFORMS), Linthicum, Maryland, USA},
}

@book{LecturesOnDiscreteGeometryMatousek2002,
 author = {Matousek, Jiri},
 title = {Lectures on Discrete Geometry},
 year = {2002},
 isbn = {0387953744},
 publisher = {Springer-Verlag New York, Inc.},
 address = {Secaucus, NJ, USA},
}

@misc{RegevLectureNotes2009,
  author        = {Oded Regev},
  title         = {Lecture notes on Lattices},
  url           = {http://www.cims.nyu.edu/~regev/teaching/lattices_fall_2009},
  year          = {2009},
  publisher={NYU}
}

@article{KnapsackCryptosystems-Odlyzko1990,
title = { The rise and fall of knapsack cryptosystems},
author = {A. M. Odlyzko},
pages = {75-88},
journal = {Cryptology and Computational Number Theory},
editor = {C. Pomerance}, 
publisher = {Am. Math. Soc.},
conference = {Proc. Symp. Appl. Math. #42},
year = {1990},
url = {http://www.dtc.umn.edu/~odlyzko/doc/arch/knapsack.survey.pdf}
}

@article{IPinFixedDim-Lenstra1983,
     jstor_articletype = {research-article},
     title = {Integer Programming with a Fixed Number of Variables},
     author = {{L}enstra, H. W., Jr.},
     journal = {Mathematics of Operations Research},
     jstor_issuetitle = {},
     volume = {8},
     number = {4},
     jstor_formatteddate = {Nov., 1983},
     pages = {pp. 538-548},
     url = {http://www.jstor.org/stable/3689168},
     ISSN = {0364765X},
     abstract = {It is shown that the integer linear programming problem with a fixed number of variables is polynomially solvable. The proof depends on methods from geometry of numbers.},
     language = {English},
     year = {1983},
     publisher = {INFORMS},
     copyright = {Copyright © 1983 INFORMS},
    }

@article{RandomizedSieving-Dadush2014,
 author = {Dadush, Daniel},
 title = {A Randomized Sieving Algorithm for Approximate Integer Programming},
 journal = {Algorithmica},
 issue_date = {October   2014},
 volume = {70},
 number = {2},
 month = oct,
 year = {2014},
 issn = {0178-4617},
 pages = {208--244},
 numpages = {37},
 url = {http://dx.doi.org/10.1007/s00453-013-9834-8},
 doi = {10.1007/s00453-013-9834-8},
 acmid = {2659936},
 publisher = {Springer-Verlag New York, Inc.},
 address = {Secaucus, NJ, USA},
 keywords = {Closest vector problem, Integer programming, Lattice problems, Shortest vector problem},
}

@article{BlomerNaeweGeneralizedSievingTCS2009,
  author       = {Johannes Bl{\"{o}}mer and
                  Stefanie Naewe},
  title        = {Sampling methods for shortest vectors, closest vectors and successive
                  minima},
  journal      = {Theor. Comput. Sci.},
  volume       = {410},
  number       = {18},
  pages        = {1648--1665},
  year         = {2009},
  url          = {https://doi.org/10.1016/j.tcs.2008.12.045},
  doi          = {10.1016/J.TCS.2008.12.045},
  timestamp    = {Wed, 17 Feb 2021 21:57:29 +0100},
  biburl       = {https://dblp.org/rec/journals/tcs/BlomerN09.bib},
  bibsource    = {dblp computer science bibliography, https://dblp.org}
}

@article{TransferenceTheorems-Banaszczyk93,
year={1993},
issn={0025-5831},
journal={Mathematische Annalen},
volume={296},
number={1},
doi={10.1007/BF01445125},
title={New bounds in some transference theorems in the geometry of numbers},
url={http://dx.doi.org/10.1007/BF01445125},
publisher={Springer-Verlag},
keywords={11H06; 11H60; 52C07},
author={Banaszczyk, W.},
pages={625-635},
language={English}
}

@article{Banaszczyk1996TransferenceTheoremsForGeneralConvexBodies,
  author    = {Wojciech Banaszczyk},
  title     = {Inequalities for Convex Bodies and Polar Reciprocal Lattices in $\mathbb{R}^n$
               {II:} Application of {K}-Convexity},
  journal   = {Discret. Comput. Geom.},
  volume    = {16},
  number    = {3},
  pages     = {305--311},
  year      = {1996},
  url       = {https://doi.org/10.1007/BF02711514},
  doi       = {10.1007/BF02711514},
  timestamp = {Thu, 12 Mar 2020 17:21:16 +0100},
  biburl    = {https://dblp.org/rec/journals/dcg/Banaszczyk96.bib},
  bibsource = {dblp computer science bibliography, https://dblp.org}
}

@book {AsymptoticGeometricAnalysis-Book2015,
    AUTHOR = {Artstein-Avidan, Shiri and Giannopoulos, Apostolos and Milman,
              Vitali D.},
     TITLE = {Asymptotic geometric analysis. {P}art {I}},
    SERIES = {Mathematical Surveys and Monographs},
    VOLUME = {202},
 PUBLISHER = {American Mathematical Society, Providence, RI},
      YEAR = {2015},
     PAGES = {xx+451},
      ISBN = {978-1-4704-2193-9},
   MRCLASS = {52A21 (28Axx 46-02 46Bxx 52A23 52A40)},
  MRNUMBER = {3331351},
MRREVIEWER = {Artem Zvavitch},
       DOI = {10.1090/surv/202},
       URL = {https://doi.org/10.1090/surv/202},
}

@article{BanaszczykLitvakPajorSzarekMOR99Flatness,
  author    = {Wojciech Banaszczyk and
               Alexander E. Litvak and
               Alain Pajor and
               Stanislaw J. Szarek},
  title     = {The Flatness Theorem for Nonsymmetric Convex Bodies via the Local
               Theory of {B}anach Spaces},
  journal   = {Math. Oper. Res.},
  volume    = {24},
  number    = {3},
  pages     = {728--750},
  year      = {1999},
  url       = {https://doi.org/10.1287/moor.24.3.728},
  doi       = {10.1287/moor.24.3.728},
  timestamp = {Sat, 19 Oct 2019 19:08:03 +0200},
  biburl    = {https://dblp.org/rec/journals/mor/BanaszczykLPS99.bib},
  bibsource = {dblp computer science bibliography, https://dblp.org}
}

@article{Rogers1957TheDB,
  title={The difference body of a convex body},
  author={Claude Ambrose Rogers and Geoffrey C. Shephard},
  journal={Archiv der Mathematik},
  year={1957},
  volume={8},
  pages={220-233}
}

@article{Rudelson1998DistancesBN,
  title={Distances Between Non-symmetric Convex Bodies and the {$MM^*$}-estimate},
  author={Mark Rudelson},
  journal={Positivity},
  year={1998},
  volume={4},
  pages={161-178}
}

@phdthesis{DadushThesis2012,
author = {Dadush, Daniel Nicolas},
advisor = {Vempala, Santosh S.},
title = {Integer Programming, Lattice Algorithms, and Deterministic Volume Estimation},
year = {2012},
isbn = {9781267739605},
publisher = {Georgia Institute of Technology},
address = {USA},
abstract = {The main subject of this thesis is the development of new geometric tools and techniques for solving classic problems within the geometry of numbers and convex geometry. At a high level, the problems considered in this thesis concern the varied interplay between the continuous and the discrete, an important theme within computer science and operations research. The first subject we consider is the study of cutting planes for non-linear integer programs. Cutting planes have been implemented to great effect for linear integer programs, and so understanding their properties in more general settings is an important subject of study. As our contribution to this area, we show that Chv\'{a}tal-Gomory closure of any compact convex set is a rational polytope. As a consequence, we resolve an open problem of Schrijver (Ann. Disc. Math. `80) regarding the same question for irrational polytopes. The second subject of study is that of ellipsoidal approximation of convex bodies. Different such notions have been important to the development of fundamental geometric algorithms: e.g. the ellipsoid method for convex optimization (enclosing ellipsoids), or random walk methods for volume estimation (inertial ellipsoids). Here we consider the construction of an ellipsoid with good covering" properties with respect to a convex body, known in convex geometry as the {M}-ellipsoid. As our contribution, we give two algorithms for constructing {M}-ellipsoids, and provide an application to near-optimal deterministic volume estimation in the oracle model.Equipped with this new geometric tool, we move to the study of classic lattice problems in the geometry of numbers, namely the Shortest (SVP) and Closest Vector Problems (CVP). Here we use {M}-ellipsoid coverings, combined with an algorithm of Micciancio and Voulgaris for CVP in the \'{y}2 norm (STOC `10), to obtain the first deterministic 2 O (n) time algorithm for the SVP in general norms. Combining this algorithm with a novel lattice sparsification technique, we derive the first deterministic 2 O (n) (1 + 1=/\'{y}) n time algorithm for (1 + \'{y})-approximate CVP in general norms.For the next subject of study, we analyze the geometry of general integer programs. A central structural result in this area is Kinchine's flatness theorem, which states that every lattice free convex body has integer width bounded by a function of dimension. As our contribution, we build on the work Banaszczyk, using tools from lattice based cryptography, to give a new and tighter proof of the atness theorem.Lastly, combining all the above techniques, we consider the study of algorithms for the Integer Programming Problem (IP). As our main contribution, we give a new 2 O (n) n n time algorithm for IP, which yields the fastest currently known algorithm for IP and improves on the classic works of {L}enstra (MOR `83) and Kannan (MOR `87).}
}

@article{KannanLovasz-CoveringMinima-AnnalsOfMath1988,
 ISSN = {0003486X},
 URL = {http://www.jstor.org/stable/1971436},
 abstract = {The covering radius of a convex body K (with respect to a lattice L) is the least factor by which the body needs to be blown up so that its translates by lattice vectors cover the whole space. The covering radius and related quantities have been studied extensively in the geometry of numbers (mainly for convex bodies symmetric about the origin). In this paper, we define and study the "covering minima" of a general convex body. The covering radius will be one of these minima; the "lattice width" of the body will be the reciprocal of another. We derive various inequalities relating these minima. These imply bounds on the width of lattice-point-free convex bodies. We prove that every lattice-point-free body has a projection whose volume is not much larger than the determinant of the projected lattice.},
 author = {Ravi Kannan and L\'aszl\'o Lov\'asz},
 journal = {Annals of Mathematics},
 number = {3},
 pages = {577--602},
 publisher = {Annals of Mathematics},
 title = {Covering Minima and Lattice-Point-Free Convex Bodies},
 urldate = {2022-07-08},
 volume = {128},
 year = {1988}
}

@inproceedings{Dadush-Finding-DenseLatticeSubspacesSTOC19,
  author    = {Daniel Dadush},
  title     = {On approximating the covering radius and finding dense lattice subspaces},
  booktitle = {{STOC}},
  pages     = {1021--1026},
  publisher = {{ACM}},
  year      = {2019}
}

@inproceedings{Regev-SD-ReverseMinkowskiTheoremSTOC17,
  author    = {Oded Regev and
               Noah Stephens{-}Davidowitz},
  title     = {A reverse {M}inkowski theorem},
  booktitle = {{STOC}},
  pages     = {941--953},
  publisher = {{ACM}},
  year      = {2017}
}

@article{Regev-SD-ReverseMinkowskiTheoremAnnalsOfMath2024,
author = {Oded Regev and Noah Stephens{-}Davidowitz},
title = {{A reverse Minkowski theorem}},
volume = {199},
journal = {Annals of Mathematics},
number = {1},
publisher = {Department of Mathematics of Princeton University},
pages = {1 -- 49},
abstract = {We prove a conjecture due to Dadush, showing that if $\mathcal{L} \subset \mathbb{R}^n$ is a lattice such that $\det(\mathcal{L}') \ge 1$ for all sublattices $\mathcal{L}' \subseteq \mathcal{L}$, then $\sum_{\mathbf{y}\in \mathcal{L}} e^{-\pi t^2 \|\mathbf{y} \|^2} \le 3/2$, where $t := 10(\log n + 2)$. From this we derive bounds on the number of short lattice vectors, which can be viewed as a partial converse to Minkowski's celebrated first theorem. We also derive a bound on the covering radius.},
keywords = {geometry of numbers, lattices, Minkowski's theorem},
year = {2024},
doi = {10.4007/annals.2024.199.1.1},
URL = {https://doi.org/10.4007/annals.2024.199.1.1}
}

@article{LWE-RegevJACM09,
  author    = {Oded Regev},
  title     = {On lattices, learning with errors, random linear codes, and cryptography},
  journal   = {J. {ACM}},
  volume    = {56},
  number    = {6},
  pages     = {34:1--34:40},
  year      = {2009}
}

@phdthesis{PhDThesisStephens-Davidowitz2017,
  author    = {Noah Stephens{-}Davidowitz},
  title     = {On the Gaussian Measure Over Lattices},
  school    = {New York University, {USA}},
  year      = {2017}
}

@article{NearOptDetAlgoForMEllipsoid-DadushVempalaPNAS13,
  author    = {Daniel Dadush and
               Santosh S. Vempala},
  title     = {Near-optimal deterministic algorithms for volume computation via {M}-ellipsoids},
  journal   = {Proc. Natl. Acad. Sci. {USA}},
  volume    = {110},
  number    = {48},
  pages     = {19237--19245},
  year      = {2013}
}

@article {LLL1982,
    AUTHOR = {{L}enstra, A. K. and {L}enstra, Jr., H. W. and Lov\'{a}sz, L.},
     TITLE = {Factoring polynomials with rational coefficients},
   JOURNAL = {Math. Ann.},
  FJOURNAL = {Mathematische Annalen},
    VOLUME = {261},
      YEAR = {1982},
    NUMBER = {4},
     PAGES = {515--534},
      ISSN = {0025-5831},
   MRCLASS = {12-04 (12A20 68C20 68C25)},
  MRNUMBER = {682664},
MRREVIEWER = {Daniel Lazard},
       DOI = {10.1007/BF01457454},
       URL = {https://doi.org/10.1007/BF01457454},
}

@inproceedings{TowardsReverseMinkowskiDadushRegevFOCS16,
  author    = {Daniel Dadush and
               Oded Regev},
  title     = {Towards Strong Reverse {M}inkowski-Type Inequalities for Lattices},
  booktitle = {{FOCS}},
  pages     = {447--456},
  publisher = {{IEEE} Computer Society},
  year      = {2016}
}

@incollection {John1948,
    AUTHOR = {John, Fritz},
     TITLE = {Extremum problems with inequalities as subsidiary conditions},
 BOOKTITLE = {Studies and {E}ssays {P}resented to {R}. {C}ourant on his 60th
              {B}irthday, {J}anuary 8, 1948},
     PAGES = {187--204},
 PUBLISHER = {Interscience Publishers, Inc., New York},
      YEAR = {1948},
   MRCLASS = {49.0X},
  MRNUMBER = {0030135},
MRREVIEWER = {J. E. Wilkins, Jr.},
}

@article{MilmanPajor2000,
title = {Entropy and Asymptotic Geometry of Non-Symmetric Convex Bodies},
journal = {Advances in Mathematics},
volume = {152},
number = {2},
pages = {314-335},
year = {2000},
issn = {0001-8708},
doi = {https://doi.org/10.1006/aima.1999.1903},
url = {https://www.sciencedirect.com/science/article/pii/S0001870899919035},
author = {V.D. Milman and A. Pajor},
abstract = {We extend to the general, not necessarily centrally symmetric setting a number of basic results of local theory which were known before for centrally symmetric bodies and were using very essentially the symmetry in their proofs. Some of these extensions look surprising. The main additional tool is a study of volume behavior around the centroid of the body.}
}

@inproceedings{EnumerateLatticeAlgosDadushPeikertVempalaFOCS11,
  author    = {Daniel Dadush and
               Chris Peikert and
               Santosh S. Vempala},
  title     = {Enumerative Lattice Algorithms in any Norm Via {M}-ellipsoid Coverings},
  booktitle = {{FOCS}},
  pages     = {580--589},
  publisher = {{IEEE} Computer Society},
  year      = {2011}
}

@article{FromApxToExactIP_DER_Arxiv2022,
  author       = {Daniel Dadush and
                  Friedrich Eisenbrand and
                  Thomas Rothvoss},
  title        = {From approximate to exact integer programming},
  journal      = {CoRR},
  volume       = {abs/2211.03859},
  year         = {2022}
}

@article{DBLP:journals/disopt/HildebrandK13,
  author       = {Robert Hildebrand and
                  Matthias K{\"{o}}ppe},
  title        = {A new {L}enstra-type algorithm for quasiconvex polynomial integer minimization
                  with complexity {$2^{O(n \log n)}$}},
  journal      = {Discret. Optim.},
  volume       = {10},
  number       = {1},
  pages        = {69--84},
  year         = {2013},
  url          = {https://doi.org/10.1016/j.disopt.2012.11.003},
  doi          = {10.1016/J.DISOPT.2012.11.003},
  timestamp    = {Mon, 05 Feb 2024 20:21:38 +0100},
  biburl       = {https://dblp.org/rec/journals/disopt/HildebrandK13.bib},
  bibsource    = {dblp computer science bibliography, https://dblp.org}
}

@article{vritsiou2023regular,
title = {Regular ellipsoids and a {B}laschke-{S}antaló-type inequality for projections of non-symmetric convex bodies},
journal = {Journal of Functional Analysis},
volume = {286},
number = {11},
pages = {110414},
year = {2024},
issn = {0022-1236},
doi = {https://doi.org/10.1016/j.jfa.2024.110414},
url = {https://www.sciencedirect.com/science/article/pii/S0022123624001022},
author = {Beatrice-Helen Vritsiou},
keywords = {Regular covering, Non-symmetric convex bodies, Volume inequalities for projections and sections, Mean norm in isotropic position}
}

@article {MR4832117,
    AUTHOR = {{Alonso-Guti\'errez}, David and {Mar{\'i}n Sola}, Francisco and
              {Mart\'in Go\~ni}, Javier and {Yepes Nicol\'as}, Jes\'us},
     TITLE = {A general functional version of {G}r\"unbaum's inequality},
   JOURNAL = {J. Math. Anal. Appl.},
  FJOURNAL = {Journal of Mathematical Analysis and Applications},
    VOLUME = {544},
      YEAR = {2025},
    NUMBER = {1},
     PAGES = {Paper No. 129065, 20},
      ISSN = {0022-247X,1096-0813},
   MRCLASS = {52A20},
  MRNUMBER = {4832117},
       DOI = {10.1016/j.jmaa.2024.129065},
       URL = {https://doi.org/10.1016/j.jmaa.2024.129065},
}

@misc{reis2024subspaceflatnessconjecturefasterV2,
      title={The Subspace Flatness Conjecture and Faster Integer Programming}, 
      author={Victor Reis and Thomas Rothvoss},
      year={2023},
      eprint={2303.14605v2},
      archivePrefix={arXiv},
      primaryClass={math.OC},
      note={V2, posted Monday, April 24, 2023. },
      url={https://arxiv.org/abs/2303.14605v2}, 
}

@article{Harder1974,
author = {Harder, G. and Narasimhan, M.S.},
journal = {Mathematische Annalen},
pages = {215-248},
title = {On the Cohomology Groups of Moduli Spaces of Vector Bundles on Curves.},
url = {http://eudml.org/doc/182746},
volume = {212},
year = {1974},
}

@article {Stuhler1976,
    AUTHOR = {Stuhler, Ulrich},
     TITLE = {Eine {B}emerkung zur {R}eduktionstheorie quadratischer
              {F}ormen},
   JOURNAL = {Arch. Math. (Basel)},
  FJOURNAL = {Archiv der Mathematik},
    VOLUME = {27},
      YEAR = {1976},
    NUMBER = {6},
     PAGES = {604--610},
      ISSN = {0003-889X,1420-8938},
   MRCLASS = {10E25},
  MRNUMBER = {424707},
MRREVIEWER = {Carl\ Riehm},
       DOI = {10.1007/BF01224726},
       URL = {https://doi.org/10.1007/BF01224726},
}

@article {Grayson1984,
    AUTHOR = {Grayson, Daniel R.},
     TITLE = {Reduction theory using semistability},
   JOURNAL = {Comment. Math. Helv.},
  FJOURNAL = {Commentarii Mathematici Helvetici},
    VOLUME = {59},
      YEAR = {1984},
    NUMBER = {4},
     PAGES = {600--634},
      ISSN = {0010-2571,1420-8946},
   MRCLASS = {22E40},
  MRNUMBER = {780079},
MRREVIEWER = {O.\ V.\ Shvartsman},
       DOI = {10.1007/BF02566369},
       URL = {https://doi.org/10.1007/BF02566369},
}

@book {Numbertheory-Coppel-2009,
    AUTHOR = {Coppel, W. A.},
     TITLE = {Number theory},
    SERIES = {Universitext},
   EDITION = {Second},
      NOTE = {An introduction to mathematics},
 PUBLISHER = {Springer, New York},
      YEAR = {2009},
     PAGES = {xiv+610},
      ISBN = {978-0-387-89485-0},
   MRCLASS = {11-01 (05B20 37A45)},
  MRNUMBER = {2537521},
       DOI = {10.1007/978-0-387-89486-7},
       URL = {https://doi.org/10.1007/978-0-387-89486-7},
}

@book {GeometryOfNumbers-GruberLekkerkerker1987,
    AUTHOR = {Gruber, P. M. and Lekkerkerker, C. G.},
     TITLE = {Geometry of numbers},
    SERIES = {North-Holland Mathematical Library},
    VOLUME = {37},
   EDITION = {Second},
 PUBLISHER = {North-Holland Publishing Co., Amsterdam},
      YEAR = {1987},
     PAGES = {xvi+732},
      ISBN = {0-444-70152-4},
   MRCLASS = {11Hxx (52-02)},
  MRNUMBER = {893813},
MRREVIEWER = {Thomas\ W.\ Cusick},
}

@article{FrankTardos1987,
	abstract = {We present a preprocessing algorithm to make certain polynomial time algorithms strongly polynomial time. The running time of some of the known combinatorial optimization algorithms depends on the size of the objective functionw. Our preprocessing algorithm replacesw by an integral valued-w whose size is polynomially bounded in the size of the combinatorial structure and which yields the same set of optimal solutions asw.},
	author = {Frank, Andr{\'a}s and Tardos, {\'E}va},
	date = {1987/03/01},
	date-added = {2026-01-08 14:51:51 -0800},
	date-modified = {2026-01-08 14:51:51 -0800},
	doi = {10.1007/BF02579200},
	id = {Frank1987},
	isbn = {1439-6912},
	journal = {Combinatorica},
	number = {1},
	pages = {49--65},
	title = {An application of simultaneous diophantine approximation in combinatorial optimization},
	url = {https://doi.org/10.1007/BF02579200},
	volume = {7},
	year = {1987},
	bdsk-url-1 = {https://doi.org/10.1007/BF02579200}}

@INPROCEEDINGS{HardnessofCoveringRadiusHavivRegev2006,
  author={Haviv, I. and Regev, O.},
  booktitle={21st Annual IEEE Conference on Computational Complexity (CCC'06)}, 
  title={Hardness of the covering radius problem on lattices}, 
  year={2006},
  volume={},
  number={},
  pages={14 pp.-158},
  keywords={Lattices;Polynomials;Computer science;Linear code},
  doi={10.1109/CCC.2006.23}}

@article {PisierKConvexity1982,
    AUTHOR = {Pisier, Gilles},
     TITLE = {Holomorphic semigroups and the geometry of {B}anach spaces},
   JOURNAL = {Ann. of Math. (2)},
  FJOURNAL = {Annals of Mathematics. Second Series},
    VOLUME = {115},
      YEAR = {1982},
    NUMBER = {2},
     PAGES = {375--392},
      ISSN = {0003-486X},
   MRCLASS = {46B20 (47D05)},
  MRNUMBER = {647811},
MRREVIEWER = {T.\ Figiel},
       DOI = {10.2307/1971396},
       URL = {https://doi.org/10.2307/1971396},
}

@article {FigielTomczakJaegermann1979,
    AUTHOR = {Figiel, T. and Tomczak-Jaegermann, Nicole},
     TITLE = {Projections onto {H}ilbertian subspaces of {B}anach spaces},
   JOURNAL = {Israel J. Math.},
  FJOURNAL = {Israel Journal of Mathematics},
    VOLUME = {33},
      YEAR = {1979},
    NUMBER = {2},
     PAGES = {155--171},
      ISSN = {0021-2172},
   MRCLASS = {46B20},
  MRNUMBER = {571251},
MRREVIEWER = {Yehoram\ Gordon},
       DOI = {10.1007/BF02760556},
       URL = {https://doi.org/10.1007/BF02760556},
}

@article {LewisEllipsoids1979,
    AUTHOR = {Lewis, D. R.},
     TITLE = {Ellipsoids defined by {B}anach ideal norms},
   JOURNAL = {Mathematika},
  FJOURNAL = {Mathematika. A Journal of Pure and Applied Mathematics},
    VOLUME = {26},
      YEAR = {1979},
    NUMBER = {1},
     PAGES = {18--29},
      ISSN = {0025-5793},
   MRCLASS = {46B99 (47D15)},
  MRNUMBER = {557122},
MRREVIEWER = {T.\ Figiel},
       DOI = {10.1112/S0025579300009566},
       URL = {https://doi.org/10.1112/S0025579300009566},
}

@misc{DadushImprovedFlatnessArgument,
  author = "Dadush, Daniel",
  date = "2023",
  year = 2023,
  howpublished = "personal communication"
}

@book {HewittRossVolII1970,
    AUTHOR = {Hewitt, Edwin and Ross, Kenneth A.},
     TITLE = {Abstract harmonic analysis. {V}ol. {II}: {S}tructure and
              analysis for compact groups. {A}nalysis on locally compact
              {A}belian groups},
    SERIES = {Die Grundlehren der mathematischen Wissenschaften},
    VOLUME = {Band 152},
 PUBLISHER = {Springer-Verlag, New York-Berlin},
      YEAR = {1970},
     PAGES = {ix+771},
   MRCLASS = {42.50 (22.00)},
  MRNUMBER = {262773},
MRREVIEWER = {R.\ E.\ Edwards},
}

@article{KlartagLehecProofOfSlicingConjectureGAFA2025,
	abstract = {We provide the final step in the resolution of {B}ourgain's slicing problem in the affirmative. Thus we establish the following theorem: for any convex body {\$}K {$\backslash$}subseteq {$\backslash$}mathbb{\{}R{\}}\^{}{\{}n{\}}{\$}of volume one, there exists a hyperplane {\$}H {$\backslash$}subseteq {$\backslash$}mathbb{\{}R{\}}\^{}{\{}n{\}}{\$}such that Voln−1(K∩H)>c,{\$}{\$} Vol{\_}{\{}n-1{\}}(K {$\backslash$}cap H) > c, {\$}{\$}where {\$}c > 0{\$}is a universal constant. Our proof combines Milman's theory of {\$}M{\$}-ellipsoids, stochastic localization with a recent bound by {G}uan, and stability estimates for the Shannon-Stam inequality by Eldan and Mikulincer.},
	author = {Klartag, Boaz and Lehec, Joseph},
	date = {2025/08/01},
	date-added = {2026-02-23 20:39:29 -0800},
	date-modified = {2026-02-23 20:39:29 -0800},
	doi = {10.1007/s00039-025-00718-w},
	id = {Klartag2025},
	isbn = {1420-8970},
	journal = {Geometric and Functional Analysis},
	number = {4},
	pages = {1147--1168},
	title = {Affirmative Resolution of {B}ourgain's Slicing Problem Using {G}uan's Bound},
	url = {https://doi.org/10.1007/s00039-025-00718-w},
	volume = {35},
	year = {2025},
	bdsk-url-1 = {https://doi.org/10.1007/s00039-025-00718-w}}

@misc{guan2024notebourgainsslicingproblem,
      title={A note on {B}ourgain's slicing problem}, 
      author={Qingyang {G}uan},
      year={2024},
      eprint={2412.09075},
      archivePrefix={arXiv},
      primaryClass={math.MG},
      url={https://arxiv.org/abs/2412.09075}, 
}

@misc{bizeul2025distancesnonsymmetricconvexbodies,
      title={Distances between non-symmetric convex bodies: optimal bounds up to polylog}, 
      author={Pierre Bizeul and Boaz Klartag},
      year={2025},
      eprint={2510.20511},
      archivePrefix={arXiv},
      primaryClass={math.MG},
      url={https://arxiv.org/abs/2510.20511}, 
}

@misc{tzioutziou2025,
      title={Sections and projections of the outer and inner regularizations of a convex body},
      author={Natalia Tziotziou},
      year={2025},
      eprint={2511.02974},
      archivePrefix={arXiv},
      primaryClass={math.MG},
      url={https://arxiv.org/abs/2511.02974},
}

\end{document}